\newtheorem{theorem}{Theorem}
\newtheorem{definition}[theorem]{Definition}
\newtheorem{proposition}[theorem]{Proposition}
\newtheorem{lemma}[theorem]{Lemma}
\theoremstyle{definition}
\newtheorem{example}[theorem]{Example}
\newtheorem{remark}[theorem]{Remark}
\numberwithin{theorem}{section}
\numberwithin{equation}{section}
\begin{document}

\title[Toroidal Positive Energy Theorem and Rigidity Results]
{The Positive Energy Theorem for Asymptotically Hyperboloidal Initial Data Sets With Toroidal Infinity and Related Rigidity Results}

\author[Alaee]{Aghil Alaee}
\address{\parbox{\linewidth}{Aghil Alaee\\
Department of Mathematics, Clark University, Worcester, MA 01610, USA\\
Center of Mathematical Sciences and Applications, Harvard University,
Cambridge, MA 02138, USA}}
\email{aalaeekhangha@clarku.edu, aghil.alaee@cmsa.fas.harvard.edu}

\author[Hung]{Pei-Ken Hung}
\address{\parbox{\linewidth}{Pei-Ken Hung\\
School of Mathematics, University of Minnesota,
Minneapolis, MN 55455}}
\email{pkhung@umn.edu}

\author[Khuri]{Marcus Khuri}
\address{\parbox{\linewidth}{Marcus Khuri\\
Department of Mathematics, Stony Brook University,
Stony Brook, NY 11794, USA}}
\email{khuri@math.sunysb.edu}

\thanks{A. Alaee acknowledges the support of an AMS-Simons travel grant. M. Khuri acknowledges the support of NSF Grants DMS-1708798, DMS-2104229, and Simons Foundation Fellowship 681443.}


\begin{abstract}
We establish the positive energy theorem and a Penrose-type inequality for 3-dimensional asymptotically hyperboloidal initial data sets with toroidal infinity, weakly trapped boundary, and satisfying the dominant energy condition. In the umbilic case, a rigidity statement is proven showing that the total energy vanishes precisely when the initial data manifold is isometric to a portion of the canonical slice of the associated Kottler spacetime. Furthermore, we provide a new proof of the recent rigidity theorems of Eichmair-Galloway-Mendes \cite{EGM} in dimension 3, with weakened hypotheses in certain cases. These results are obtained through an analysis of the level sets of spacetime harmonic functions.
\end{abstract}

\maketitle

\section{Introduction}
\label{sec1}
\setcounter{equation}{0}
\setcounter{section}{1}

The positive energy theorem for asymptotically hyperbolic initial data sets with spherical infinity is well-studied. There is a vast literature on the subject, and we direct the interested reader to some of the most recent results \cite{BHKKZ1,BHKKZ,ChruscielDelay,HJM,Sakovich},
as well as the references therein. By contrast, much less is known about the nature of total energy for asymptotically (locally) hyperbolic data having a conformal infinity of positive genus. In fact, the question of positive energy appears to be significantly more delicate in this setting, since examples such as the Horowitz-Myers soliton \cite{HM} show that the positive energy theorem fails under the traditional hypotheses of completeness together with a satisfactory energy density condition. Here we will focus attention on asymptotically hyperboloidal initial data with toroidal infinity, and prove the positive energy theorem when a nonempty weakly trapped boundary is present. In addition, related rigidity results for compact initial data sets with boundary are also established.

Let $(M,g,k)$ be a 3-dimensional smooth initial data set for the Einstein equations, where $M$ is an orientable connected manifold with nonempty boundary, $g$ is a Riemannian metric, and $k$ is a symmetric 2-tensor representing the second fundamental form of an embedding into spacetime. The tensors $g$ and $k$ must satisfy the constraint equations
\begin{equation}
2\mu = R_g+(\mathrm{Tr}_{g}k)^{2}-|k|_{g}^{2},\qquad\quad
 J = \operatorname{div}_{g}\left(k-(\mathrm{Tr}_{g}k \right)g),
\end{equation}
where $R_g$ denotes scalar curvature and $\mu$, $J$ are the energy and momentum density of matter fields. We will say that the \textit{dominant energy condition} is satisfied if $\mu\geq |J|_g$. Note that if $k=\pm g$, then this condition implies the scalar curvature lower bound $R_{g}\geq -6$. Let $\Sigma$ denote a closed 2-sided surface in $M$ with null expansions $\theta_{\pm}=H\pm \mathrm{Tr}_{\Sigma}k$, where $H$ denotes the mean curvature of $\Sigma$ with respect to the unit normal that points towards a designated asymptotic end.
When the surface $\Sigma$ is viewed as embedded within spacetime, the null expansions represent the mean curvature in null directions, and hence measure the rate of change of area of shells of light emanating from the surface in the outward (toward infinity) future/past direction. Moreover, the null expansions arise as traces of the null second fundamental forms $\chi^{\pm}=II\pm k|_{\Sigma}$, where $II$ is the Riemannian second fundamental form of $\Sigma\subset M$.
A strong gravitational field is associated with an outer or inner trapped surface, that is, a surface for which $\theta_{+}<0$ or $\theta_{-}<0$. Moreover, $\Sigma$ will be referred to as weakly outer/inner trapped if $\theta_{+}\leq 0$ or $\theta_{-}\leq 0$, and will be referred to as a marginally outer/inner trapped surface (MOTS or MITS) if $\theta_+ =0$ or $\theta_- =0$.

An initial data set will be called \textit{asymptotically hyperboloidal with toroidal infinity}, if there is a compact set $\mathcal{K}\subset M$ such that its complement is diffeomorphic to a cylinder with torus cross-sections, and in the coordinates given by the diffeomorphism $\psi:(1,\infty)\times T^2 \rightarrow M\setminus\mathcal{K}$ the metric and extrinsic curvature satisfy
\begin{equation}\label{asymptotics}
\psi^* g=r^{-2} dr^2+r^2 \hat{g}+r^{-1}\mathbf{m} +Q_g, \qquad\quad
\psi^* \left(k+g\right)=r^{-1}\mathbf{p}+Q_k,
\end{equation}
where $r\in (1,\infty)$ is the radial coordinate, $\hat{g}$ is a flat metric and $\textbf{m}$, $\textbf{p}$ are symmetric two-tensors all on $T^2$, and $Q_g$, $Q_k$ are symmetric 2-tensors on $(1,\infty)\times T^2$ with the property that
\begin{equation}\label{alk3nhapgaj}
|Q_g|_{b}+r | \pmb{\nabla} Q_g|_b+r^2|\pmb{\nabla}^2 Q_g|_b=o(r^{-3}),\qquad\quad
|Q_k|_{b}+r |\pmb{\nabla} Q_k|_b=o(r^{-3}).
\end{equation}
Here $b$ is the model hyperbolic metric $r^{-2}dr^2+r^2\hat{g}$ on $(1,\infty)\times T^2$, and $\pmb{\nabla}$ is the Levi-Civita connection of $b$. Note that $b$ arises as a quotient of hyperbolic space $\mathbb{H}^3$ with identifications along horospheres, and is the induced metric on a constant time slice of the (toroidal) Kottler spacetime \cite{ChruscielSimon} with zero mass and cosmological constant $\Lambda=-3$. The quantity $\mathrm{Tr}_{\hat{g}}\left(3\mathbf{m}-2\mathbf{p}\right)$ on $T^2$ is referred to as the \textit{mass aspect function} and yields a well-defined total energy (\cite{CJL}, \cite{Michel}) if $r(\mu+|J|_g)\in L^1(M\setminus\mathcal{K})$, which is given by
\begin{equation}
E=\frac{1}{|T^2|}\int_{T^2}\mathrm{Tr}_{\hat{g}}\left(3\mathbf{m}-2\mathbf{p}\right) dA_{\hat{g}}
\end{equation}
where $|T^2|$ denotes $\hat{g}$-area.

In order to state the positive energy theorem, some restrictions on the topology of $M$ will be needed. In particular, we will make use of the so called \textit{homotopy condition} from \cite{EGM}, which generalizes the situation in which $M$ is a retraction onto a given 2-dimensional submanifold $\Sigma$. Namely, the manifold $M$ will be said to satisfy the homotopy condition with respect $\Sigma$, if there exists a continuous map $\rho:M\rightarrow\Sigma$ such that its composition with the inclusion map $\rho\circ i :\Sigma\rightarrow\Sigma$ is homotopic to the identity. Furthermore, we will say that $M$ satisfies the homotopy condition with respect to conformal infinity if the condition is satisfied for a coordinate torus in the asymptotic end.

\begin{theorem}\label{thm:PMT}
Let $(M,g,k)$ be a smooth orientable 3-dimensional asymptotically hyperboloidal initial data set with toroidal infinity satisfying the dominant energy condition. Suppose that the boundary is nonempty $\partial M\neq \varnothing$, that $M$ satisfies the homotopy condition with respect to conformal infinity, and $H_2(M,\partial M;\mathbb{Z})=0$. If the boundary is weakly outer trapped $\theta_{+}(\partial M)\leq 0$, then $E\geq 0$. Moreover, the same conclusion continues to hold if the boundary contains additional components which are weakly inner trapped and of genus zero.
\end{theorem}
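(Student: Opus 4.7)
The plan is to adapt the spacetime harmonic function method to the hyperboloidal, toroidal setting. First, I would construct a function $u: M\to\mathbb{R}$ satisfying
\begin{equation*}
\Delta_g u + (\mathrm{Tr}_{g}k)|\nabla u|_g = 0 \quad\text{on } M,
\end{equation*}
with Dirichlet data $u=0$ on weakly outer trapped components of $\partial M$, a positive constant on any weakly inner trapped components, and asymptotic behavior $u(r,\vartheta) = r + o(r)$ as $r\to\infty$. On the model background $b$ with $k=-b$, this PDE admits $u=r$ as an exact solution, making it the natural hyperboloidal analogue of the spacetime Laplace equation used in the asymptotically flat case. Existence and decay would follow from solving the mixed Dirichlet problem on the exhaustions $\Omega_R = \{r\leq R\}$ and passing to the limit, using weighted Schauder estimates and maximum-principle barriers calibrated against $b$.

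Next, I would derive a Bochner--Stern type identity for $u$ and, after eliminating $\Delta_g u$ via the PDE, invoking the constraint equations, and applying the Gauss equation on the regular level sets $\Sigma_t = u^{-1}(t)$, integrate to obtain an inequality of the schematic form
\begin{equation*}
c\,|T^{2}|\cdot E \;\geq\; \int_{M}\Bigl(\bigl|\overline{\nabla}^2 u\bigr|_g^2 + (\mu - |J|_g)|\nabla u|_g\Bigr) dV - \int_{\partial M}\theta\,|\nabla u|_g\,dA - 2\pi\int \chi(\Sigma_t)\,dt,
\end{equation*}
where $c>0$, $\overline{\nabla}^2 u$ is a Hessian corrected by $k$, and $\theta$ denotes $\theta_+$ on outer trapped components (respectively $-\theta_-$ on inner trapped components). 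The left-hand side arises from the boundary integral at infinity upon expanding $u = r + r^{-1}w(\vartheta) + \cdots$ and substituting \eqref{asymptotics}, producing the mass-aspect combination $\mathrm{Tr}_{\hat g}(3\mathbf{m} - 2\mathbf{p})$. The bulk integral is non-negative by the dominant energy condition, and the boundary term is non-negative by the trapping hypothesis.

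The topological hypotheses are then invoked to force $\chi(\Sigma_t)\leq 0$ for almost every regular $t$, making the last term non-negative as well. The homotopy condition with respect to conformal infinity, together with $u\to\infty$ along the end, guarantees by a cobordism argument that every regular level set contains a component of positive genus. The assumption $H_2(M,\partial M;\mathbb{Z}) = 0$ then eliminates extraneous null-homologous sphere components, which, in the spirit of Eichmair--Galloway--Mendes, would otherwise bound regions that can be surgered away. All three terms on the right-hand side are thus non-negative, and $E \geq 0$ follows.

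The principal obstacle, as I see it, is the asymptotic analysis identifying the infinity boundary term with $\mathrm{Tr}_{\hat g}(3\mathbf{m} - 2\mathbf{p})$. Both the metric correction $r^{-1}\mathbf{m}$ and the extrinsic curvature correction $r^{-1}\mathbf{p}$ enter the equation for $u$ through distinct channels---$\mathbf{m}$ through $\Delta_g$ and $\mathbf{p}$ through $\mathrm{Tr}_g k$---so one must expand $u$ two orders beyond the leading $r$, substitute carefully, and verify that the numerical factors $3$ and $-2$ emerge in precisely the combination defining $E$. A secondary technical difficulty is ensuring $|\nabla u|_g$ stays bounded above and bounded away from zero near a degenerate trapped boundary (where $\theta_+\equiv 0$), which calls for explicit hyperbolic-geometry-adapted barriers in the existence theory.
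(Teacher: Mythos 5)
Your outline correctly identifies the structural skeleton used in the paper: spacetime harmonic functions solving $\Delta u + (\mathrm{Tr}_g k)|\nabla u| = 0$ on exhaustions $M_r$ with $u\sim r$ at infinity, a Bochner/Stern-type integral inequality trading interior contributions against boundary terms and $2\pi\int\chi(\Sigma_t)\,dt$, the expansion of the boundary integral over the coordinate torus $T_r$ producing a multiple of $\mathrm{Tr}_{\hat g}(3\mathbf{m}-2\mathbf{p})$, and the use of $H_2(M,\partial M;\mathbb{Z})=0$ together with the homotopy condition to control level-set topology. Your remark that $u=r$ solves the equation exactly on the model $(b,-b)$ is correct and is precisely why the barrier arguments are calibrated against $b$.

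There is, however, a genuine gap in your treatment of $\chi(\Sigma_t)$. Setting $u\equiv 0$ on all weakly outer trapped components and arbitrary positive constants on the genus-zero weakly inner trapped ones does \emph{not} force $\chi(\Sigma_t)\leq 0$, and the surgery mechanism you invoke does not apply. A sphere component of a level set near the Dirichlet value of a genus-zero boundary component is not null-homologous: it bounds a region $D$ together with that boundary component, and nothing in your setup prevents it from contributing $+2$ to $\chi(\Sigma_t)$. The paper's mechanism is different. The Dirichlet constants on the extraneous boundary components $\partial_i^\pm$, $i\geq 2$, are \emph{not} free; they are determined by an auxiliary condition requiring $\min_{\partial_i^\pm M}\partial_\upsilon u = 0$ for $i\geq 2$, whose solvability and uniqueness is the content of Proposition~\ref{lem:specialbd-app}. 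This auxiliary condition, combined with the Hopf lemma and the maximum principle, is what kills sphere components: if $\Sigma'_t$ were a sphere, by $H_2(M,\partial M;\mathbb{Z})=0$ it bounds a region $D$ whose other boundary faces lie among the $\partial_i^\pm$ with $i\geq 2$ --- here one also needs the non-separability result (Proposition~\ref{lem:separable}), which you do not mention, to force the coefficients of the distinguished components to vanish in the homology relation. The extrema of $u$ on $D$ cannot be attained on those faces since the Hopf lemma would give a strictly sign-definite normal derivative everywhere on such a face, contradicting the auxiliary condition. Both extrema are then on $\Sigma'_t$, so $u$ is constant on $D$, contradicting the regularity of $t$. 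Your cobordism-style claim that every regular level set has a positive-genus component is not the relevant statement: it does not bound $\chi(\Sigma_t)$ from above when sphere components coexist, and those spheres cannot be surgered away since they need not bound balls.

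Finally, your secondary worry that $|\nabla u|$ must be bounded away from zero near a degenerate ($\theta_+\equiv 0$) boundary is unfounded for the inequality: the integral identity of Proposition~\ref{pro:div} and Sard's theorem are applied without any gradient non-degeneracy on $\partial M$, and the finite boundary terms are controlled purely by the sign of $\theta_\pm|\nabla u|$, which remains nonnegative at points where $|\nabla u|$ vanishes.
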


It should be noted that the boundary need not have a specified topology except for the weakly inner trapped components. However, the hypothesis of a nonempty boundary cannot be removed if the conclusion is to remain valid. A counterexample to the boundaryless case is provided by the Horowitz-Myers geon with $k=-g$. The geon is a time slice of the Horowitz-Myers soliton, which gives a complete asymptotically locally hyperbolic Riemannian metric on the solid torus $D^2 \times S^1$ with constant scalar curvature $R_g =-6$ and negative mass. It is conjectured \cite{HM,Ewoolgar} that a complete Riemannian 3-manifold $(M,g)$ which is asymptotic to a Horowitz-Myers geon, and satisfies $R_g \geq -6$, must have total energy at least as large as that of the geon; furthermore, equality should hold between the energies only if the geometries are isometric.

Previous studies concerning lower bounds for the energy of asymptotically hyperboloidal initial data with toroidal infinity have focused on the umbilic case $k=- g$, with $R_g \geq -6$.
In particular, Chru\'{s}ciel-Galloway-Nguyen-Paetz \cite{CG,CGNP} have proven a version of the positive energy theorem minus the rigidity statement, assuming that there is a connected weakly outer trapped boundary and, in dimension 3, that the mass aspect function has a sign.
If the boundary is an outermost minimal surface, with at least one component having $T^2$-topology, then Lee-Neves \cite[Corollary 1.2]{LeeNeves} show that the mass aspect function has positive supremum. Furthermore, Barzegar-Chru\'{s}ciel-H\"{o}rzinger-Maliborski-Nguyen establish versions of the Horowitz-Myers conjecture under the assumption of axisymmetry, and also find supporting evidence in the perturbation regime, while Liang-Zhang \cite{Liang} prove a generalization. The case of equality $E=0$ has been treated by Huang-Jang \cite[Theorem 6]{HuangJang}, assuming that the positive energy inequality holds. Therefore, combining Theorem \ref{thm:PMT} with \cite[Theorem 6]{HuangJang} yields one method to establish the last (umbilic) statement of the following result. We will, however, provide an alternative approach based on a foliation by level sets of spacetime harmonic functions, which will in addition provide strong rigidity requirements in the general non-umbilic case.

\begin{theorem}\label{thm:rigidity}
If the energy vanishes $E=0$ under the assumptions of Theorem \ref{thm:PMT}, including the
trapped surface conditions on the boundary, then the following holds.
\begin{enumerate}
\item The manifold $M$ is diffeomorphic to $[1,\infty)\times T^2$.

\item Each level set $\Sigma_t=\{t\}\times T^2$ of the radial coordinate $t\in [1,\infty)$ is a MOTS, and in fact has vanishing null second fundamental form $\chi^+=0$.

\item The induced geometry on $\Sigma_t$ is that of a flat torus for all $t\in [1,\infty)$.

\item If $\nu_t$ denotes the unit normal to $\Sigma_t$ pointing towards infinity, then $\mu=|J|_g=-J(\nu_t)$ on $M$.
\end{enumerate}
Moreover, if in addition $k=- g$ then $(M,g)$ is isometric to the Kottler time slice $([1,\infty)\times T^2, b)$.
\end{theorem}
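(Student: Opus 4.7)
The plan is to extract the rigidity statement directly from the integral identity underlying Theorem~\ref{thm:PMT}. That identity expresses $E$ as a sum of a bulk integrand proportional to $\mu+J(\nu)$, a manifestly nonnegative ``spacetime Hessian'' expression of the schematic form $|\nabla^2 u+|\nabla u|(k+g)|_g^2/|\nabla u|$, and boundary contributions that are nonnegative under the trapped surface hypotheses, where $u$ is the spacetime harmonic function constructed in the proof of the positive energy theorem with asymptotics compatible with the toroidal conformal infinity. The equality case $E=0$ then forces every integrand in this identity to vanish pointwise, and items (1)--(4) will be read off from this vanishing together with a global argument identifying $M$ with a half-cylinder.

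First I would record the pointwise consequences on the interior. The vanishing of the bulk term proportional to $\mu+J(\nu_t)$, combined with the dominant energy condition $\mu\geq |J|_g$, gives $\mu=|J|_g=-J(\nu_t)$, proving (4). The vanishing of the spacetime Hessian expression implies that $\nabla u$ has no zeros in the interior of $M$, and that on each regular level set $\Sigma_t=\{u=t\}$ the tangential part of $\nabla^2 u/|\nabla u|$ combines with $k|_{\Sigma_t}$ in exactly the way required to make the null second fundamental form $\chi^{+}=II+k|_{\Sigma_t}$ vanish identically, establishing (2). Tracing the vanishing spacetime Hessian relation, feeding the result into the Hamiltonian constraint $2\mu=R_g+(\mathrm{tr}_g k)^2-|k|_g^2$, and applying the Gauss equation for $\Sigma_t\subset M$ then shows that the intrinsic Gauss curvature of $\Sigma_t$ vanishes, so each $\Sigma_t$ is a flat torus, giving (3).

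To upgrade this to the global conclusion (1), I would flow along $\nabla u/|\nabla u|^2$. Properness of $u$ is built into the construction of the spacetime harmonic function through the prescribed asymptotic and boundary data, so its level sets foliate $M\setminus\partial M$ by closed surfaces, all of which are regular by the nonvanishing of $\nabla u$. The homotopy condition with respect to conformal infinity forces a generic level set to be homotopic to a coordinate torus in the asymptotic end, while the hypothesis $H_2(M,\partial M;\mathbb{Z})=0$ rules out additional closed components that could bound in $M$. Together these constraints force every $\Sigma_t$ to be a single connected torus, so the normalized flow identifies $M$ with $[1,\infty)\times T^2$ after relabeling the level parameter. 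The weakly outer trapped boundary $\partial M$ is realized as the innermost level set; any additional weakly inner trapped components of genus zero hypothesized in Theorem~\ref{thm:PMT} must be absent in the equality case by this same topological analysis.

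Finally, in the umbilic case $k=-g$, the identity $\chi^{+}=0$ becomes $II=g|_{\Sigma_t}$, so each level set is totally umbilic with principal curvatures equal to $1$. Combined with the flatness of $\Sigma_t$ from step (3), the Riccati equation satisfied by $II$ along the normal flow of $\nabla u$ determines $|\nabla u|^{-1}$ as an explicit function of $u$, and a change of variable $r=r(u)$ making $\partial_r$ have unit $b$-length then puts $g$ into the form $r^{-2}\,dr^2+r^2\hat{g}$ with $\hat{g}$ a flat metric on $T^2$, exhibiting $(M,g)$ as the Kottler slice $([1,\infty)\times T^2,b)$. The main obstacle in this program is the global topological step: while pointwise vanishing of the error terms is automatic once the integral identity is in hand, cleanly concluding that \emph{every} level set of $u$ is a single connected torus from $\partial M$ all the way out to conformal infinity is exactly where the homotopy condition and the hypothesis $H_2(M,\partial M;\mathbb{Z})=0$ must be used in an essential way, and requires careful control of $u$ at any weakly inner trapped boundary components permitted by Theorem~\ref{thm:PMT}.
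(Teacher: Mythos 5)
Your high-level plan -- force the integrand in the energy lower bound to vanish, read off (2) and (4) pointwise, and globalize via the foliation -- matches the paper's strategy, but several of the steps you sketch are either missing the key mechanism or propose a route whose validity is unclear.

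First, the claim that the vanishing of the spacetime Hessian term ``implies that $\nabla u$ has no zeros'' is stated but not justified, and it is \emph{not} automatic: $\bar{\nabla}^2 u=0$ alone is consistent with $\nabla u$ vanishing on a large set. The mechanism is the log-gradient bound: since $\nabla^2 u=-|\nabla u|\,k$ wherever $|\nabla u|\neq 0$, one gets $|\nabla\log|\nabla u||\leq\sup_M|k|$, and because $|\nabla u|>0$ on $\partial M$ by the Hopf lemma this propagates along curves so that $|\nabla u|>0$ on all of $M$. Closely related, your claim that the extra (genus zero) boundary components ``must be absent by this same topological analysis'' misses the actual reason: the auxiliary boundary condition in the construction of $u$ forces $\min\partial_\upsilon u=0$ on every boundary component other than a distinguished one, and this is what is incompatible with the global gradient lower bound. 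Without this argument, (1) is not established.

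Second, your derivation of flatness (item (3)) -- trace the Hessian, plug into the Hamiltonian constraint, apply the Gauss equation -- is a different route than the paper's and is not worked out enough to see that it closes. The paper instead notes that each level set is a MOTS ($\chi^+=0$ from the vanishing Hessian), writes the first variation of null expansion along $\partial_t=f^{-1}\nu$, multiplies by $f$, integrates over $\Sigma_t$, uses Gauss--Bonnet and the constraint $\chi(\Sigma_t)\le 0$ to conclude $X=-\nabla_t\log f$, and then reads off $K_t\equiv 0$ from the first-variation formula. The constraint $\chi(\Sigma_t)\le 0$ is itself nontrivial: it requires ruling out spherical components of level sets, which the paper does via the prime-decomposition / reduction-system argument (Proposition~2.2, that $\partial_1^- M$ cannot be separated from infinity by a 2-sphere) combined with a maximum principle argument. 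Your appeal to ``the homotopy condition forces a generic level set to be homotopic to a coordinate torus'' is not a substitute for this.

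Finally, the energy identity on the noncompact $M$ is obtained by taking a limit of the compact-exhaustion inequality; this requires the barrier and $C^1$ estimates for $u_\rho$ (Lemmas~6.1--6.2), a Fatou/monotone-convergence argument for the Hessian term, and a dominated-convergence argument for the $\mu+J(\nu)$ term using $r(\mu+|J|_g)\in L^1$. You treat the identity as directly available, which glosses over this step. Also, a small but telling slip: the spacetime Hessian is $\nabla^2 u+|\nabla u|\,k$, not $\nabla^2 u+|\nabla u|(k+g)$; the $g$ does not belong there, even in the asymptotically hyperboloidal setting.
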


These two theorems are established using the level set technique associated with spacetime harmonic functions. This approach has recently been used to prove the positive mass theorem in the asymptotically flat and asymptotically hyperboloidal (spherical infinity) settings \cite{BHKKZ,BKKS,HKK}, and was inspired by the work of Stern \cite{Stern} where the level sets of harmonic maps were used to study scalar curvature on compact 3-manifolds. We refer the reader to the survey \cite{BHKKZ1} for these and other developments concerning the level set method. A function $u\in C^2(M)$ will be referred to as a \textit{spacetime harmonic function} if it satisfies the equation
\begin{equation}\label{shf}
\Delta u+\left(\mathrm{Tr}_g k\right)|\nabla u|=0,
\end{equation}
in which the left-hand side arises as the trace along $M$ of the \textit{spacetime Hessian}
\begin{equation}
\bar{\nabla}_{ij}^2 u:=\nabla_{ij}^2 u+k_{ij}|\nabla u|.
\end{equation}

Under the homotopy condition of Theorem~\ref{thm:PMT}, there exists a connected component of $\partial M$, denoted by $\partial_1 M$, such that $\partial_1 M$ cannot be separated from infinity by an embedded 2-sphere. See Section~\ref{sec2} for more details. We will say that a spacetime harmonic function $u$ is \textit{admissible} if it realizes constant Dirichlet boundary data together with $\partial_{\upsilon}u\geq 0$ on each boundary component, and there is at least one point on each boundary component except $\partial_1 M$ where $|\nabla u|=0$; here $\upsilon$ denotes the unit normal to $\partial M$ pointing towards infinity. The existence of admissible spacetime harmonic functions that asymptote to the radial coordinate function in the asymptotic end is shown in Sections \ref{sec4} and \ref{sec6} below. The following energy lower bound implies Theorem \ref{thm:PMT}, and is instrumental in the proof of Theorem \ref{thm:rigidity}, however, it holds without the assumption of an energy condition but adds the integrability condition for energy/momentum density that is associated with a well-defined total energy.

\begin{theorem}\label{energylb}
Let $(M,g,k)$ be a smooth orientable 3-dimensional asymptotically hyperboloidal initial data set with toroidal infinity, such that $r(\mu+|J|_g)\in L^1(M\setminus\mathcal{K})$. Suppose that the boundary is nonempty and weakly trapped, having at least one weakly outer trapped ($\theta_+ \leq 0$) component and with each weakly inner trapped ($\theta_- \leq 0$) component of genus zero. Assume further that $M$ satisfies the homotopy condition with respect to conformal infinity, and $H_2(M,\partial M;\mathbb{Z})=0$. Then there exists an admissible spacetime harmonic function $u$ that asymptotes to the radial coordinate in the asymptotic end, and induces the energy lower bound
\begin{equation}\label{thm1.1}
E\geq \frac{1}{|T^2|}\int_{M}\left(\frac{|\bar{\nabla}^2 u|^2}{|\nabla u|}+2\left(\mu+J(\nu)\right)|\nabla u|\right)dV
\end{equation}
where $\nu=\nabla u/|\nabla u|$. Moreover if in addition $k=-g$, the dominant energy condition holds, and the boundary is minimal $H=0$ instead of weakly trapped, then a Penrose-type inequality holds
\begin{equation}\label{thm1.2}
E\geq \mathcal{C}\frac{|\partial_1 M|}{|T^2|},
\end{equation}
where $\mathcal{C}=4\min_{\partial_1 M}\partial_{\upsilon}u>0$.
\end{theorem}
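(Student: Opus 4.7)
The plan is to adapt the level-set technique for spacetime harmonic functions, used in \cite{HKK,BHKKZ} for spherical asymptotic geometries, to the toroidal hyperbolic end and the mixed trapped-boundary setup. The first step is the construction of an admissible $u$. The homotopy condition singles out one boundary component $\partial_1 M$ that is homologically linked with the torus at infinity (cf.\ Section~\ref{sec2}). I impose Dirichlet data $u=0$ on $\partial_1 M$, distinct constants $u=c_i$ on each remaining component, and solve \eqref{shf} on the truncation $M_R=M\cap\{r\leq R\}$ with $u=R$ at $\{r=R\}$. Since the radial coordinate $r$ on the model pair $(b,-b)$ exactly satisfies \eqref{shf}, the decay \eqref{alk3nhapgaj} supplies sub- and supersolutions in the asymptotic end for a Perron-type construction; a continuity argument in the $\mathrm{Tr}_g k$ coefficient together with weighted Schauder estimates then produces a $C^2$ solution on $M_R$, and the limit $R\to\infty$ yields $u$ on $M$ with expansion $u=r+o(1)$. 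Admissibility is checked next: $\partial_\upsilon u\geq 0$ follows from the Hopf lemma applied to the boundary data, while the existence of a critical point of $u$ on each auxiliary boundary component follows from the homological hypothesis $H_2(M,\partial M;\mathbb{Z})=0$, which prevents monotone level tubes from escaping those components to infinity. The detailed analysis is carried out in Sections~\ref{sec4} and \ref{sec6}.

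The second step is a divergence identity. Combining the Bochner identity for $u$ with the constraint equations, the Gauss equation for $\Sigma_t=u^{-1}(t)$, and \eqref{shf}, I derive at regular points of $u$ a pointwise inequality schematically of the form
\begin{equation*}
\Delta_g|\nabla u|+\operatorname{div}_g\bigl(k(\nabla u,\cdot)^{\sharp}\bigr) \;\geq\; \frac{|\bar{\nabla}^2 u|^2}{|\nabla u|}+\bigl(\mu+J(\nu)\bigr)|\nabla u|+|\nabla u|\bigl(K_{\Sigma_t}+3\bigr),
\end{equation*}
the toroidal-hyperbolic analogue of the identity in \cite{HKK}, with the $+3$ absorbing the model scalar curvature $-6$. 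Integrating over $\{u\leq T\}$ and invoking Gauss--Bonnet on the regular tori $\Sigma_t$ (Euler characteristic zero) eliminates the interior curvature term. The asymptotic flux through $\Sigma_T$, computed using \eqref{asymptotics}--\eqref{alk3nhapgaj} and $u=r+o(1)$, produces $|T^2|\cdot E$ as $T\to\infty$. On the boundary, each weakly outer trapped component contributes a nonnegative term via $\partial_\upsilon u\geq 0$ and $\theta_+\leq 0$; each genus-zero weakly inner trapped component yields a $4\pi$ Gauss--Bonnet defect that dominates its $\theta_-\leq 0$ term; and a standard Stern removable-singularity argument handles the critical level sets. Rearranging delivers \eqref{thm1.1}.

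For the Penrose-type bound \eqref{thm1.2}, specialize to $k=-g$ (so $R_g\geq -6$ by the dominant energy condition) and $H=0$ on $\partial_1 M$. Then the boundary integrand on $\partial_1 M$ simplifies — because the $k$-contribution is fully explicit — to a positive multiple of $\partial_\upsilon u$; estimating pointwise by $\partial_\upsilon u\geq\mathcal{C}/4$ and integrating produces $E\geq\mathcal{C}|\partial_1 M|/|T^2|$. The main difficulty I expect is the first stage: the indicial weights of the linearization of \eqref{shf} at the toroidal hyperbolic end are marginal, so establishing the precise expansion $u=r+o(1)$ (rather than merely $u=O(r)$) requires careful weighted-space analysis together with a leading-order adjustment; without this precision the flux computation in the second step would fail to reproduce the correct total energy $E$.
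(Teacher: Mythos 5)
Your overall strategy --- truncate to compact $M_r$, solve the Dirichlet problem for \eqref{shf}, apply an integral Bochner/Gauss--Bonnet identity, and compute the asymptotic flux at $T_r$ --- matches the paper's. But there are two genuine gaps and two inaccuracies that should be flagged.

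\textbf{Gap 1: Admissibility.} You claim $\partial_{\upsilon}u\geq 0$ follows from the Hopf lemma and that the existence of an interior critical point of $u$ on each auxiliary boundary component follows from $H_2(M,\partial M;\mathbb{Z})=0$ ``preventing monotone level tubes from escaping.'' Neither is correct as stated for arbitrary choices of the constants $c_i$. If $u=c_i$ on a component with $c_i\in(0,1)$ chosen arbitrarily, then $u$ may attain values on both sides of $c_i$ near that component, so $\partial_{\upsilon}u$ can change sign and Hopf gives nothing. The paper instead \emph{constructs} the Dirichlet constants $a_i^\pm$ so that $\min_{\partial_i}\partial_{\upsilon}u=0$ holds (the auxiliary condition \eqref{equ:boundarycondition2}), via a monotone fixed-point iteration for the map $\vec{T}$ (Proposition~\ref{lem:specialbd-app} / \ref{lem:specialbd-T}). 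This is a nontrivial analytic input with no topological substitute; the homology hypothesis is used elsewhere, to control the topology of level sets, not to produce critical points on $\partial M$.

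\textbf{Gap 2: The pointwise inequality with ``$+3$''.} The spacetime-harmonic Bochner identity (HKK Proposition 3.2, quoted as \eqref{jghqk}) contains no model curvature term $|\nabla u|(K+3)$; the correct term is $-K$ with the constraint data entering only through $\mu+J(\nu)$. Inserting a ``$+3$'' to absorb the model scalar curvature $-6$ conflates the spacetime identity with the purely Riemannian version where $R_g\geq -6$ appears. If you actually carry the extra $+3$ through the integration, the resulting inequality is false.

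\textbf{Misplaced worry about $u=r+o(1)$.} You anticipate that marginal indicial roots force a refined expansion $u=r+o(1)$, without which the flux computation fails. In fact the paper proves only $|u_\rho-r|\leq C$ and $|\nabla u_\rho-\nabla r|\leq C$ (Lemmas~\ref{lem:barrier}, \ref{lem:grad}), i.e., $u=r+O(1)$. That is sufficient: the boundary integrand at $T_r$ is $\theta_+|\nabla u_r|\,dA$ with $\theta_+=O(r^{-3})$ and $dA=r^2(1+o(r^{-3}))\,dA_{\hat g}$, so $|\nabla u_r|=r(1+O(r^{-1}))$ gives the correct limit with an $O(r^{-1})$ error. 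Pushing for $o(1)$ decay of $u-r$ is unnecessary, and may in fact be false under the stated asymptotics.

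\textbf{Mischaracterization of the genus-zero mechanism.} The paper does not use a ``$4\pi$ Gauss--Bonnet defect'' from the inner-trapped spheres. Since the harmonic function is constant on $\partial M$, level sets never touch the boundary and the Gauss--Bonnet term involves only the closed level sets $\Sigma_t$. The inner-trapped ($\theta_-\leq 0$) components appear solely through explicitly nonnegative boundary integrals $-\int\theta_-|\nabla u|\geq 0$ in \eqref{aogjnq0hbnn}. The genus-zero hypothesis is used in the topological Proposition~\ref{lem:separable}: it ensures these components may be treated as punctured 2-spheres in the prime decomposition, so that some weakly outer trapped component cannot be separated from infinity by a 2-sphere; this in turn forces $\chi(\Sigma_t)\leq 0$. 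Your mechanism happens to reach a conclusion of the right sign, but not for the right reason.

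The treatment of the Penrose-type bound is essentially correct (relabel so all components carry $\theta_+=-2$, use Hopf, integrate), and the overall limiting argument via Fatou/monotone convergence for the Hessian term and dominated convergence (using $r(\mu+|J|_g)\in L^1$) for the constraint-density term is as in the paper, though your sketch compresses it.
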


The methods used to prove this theorem may also be applied in the setting of compact manifolds with boundary. There we recover, with alternative arguments, a version of the main results obtained by Eichmair-Galloway-Mendes in \cite[Theorems 1.2 and 1.3]{EGM}, for dimension 3. The statement of our result differs from that of \cite[Theorem 1.2]{EGM}, in that the more restrictive hypothesis of vanishing second homology is included, while we allow for the more general situation of multiple untrapped boundary components. In contrast with \cite[Theorem 1.3]{EGM}, our assumption on $k$ leads to rigidity in the form of a warped product metric as opposed to a constant curvature model. Note that the boundary normal orientation is reversed and $k$ should be replaced by $-k$, when comparing with \cite{EGM}.

\begin{theorem}\label{thm:EGM} 
Let $(\Omega,g,k)$ be a smooth orientable 3-dimensional compact initial data set with boundary $\partial\Omega$, satisfying the dominant energy condition and $H_2(\Omega,\tilde{\partial} \Omega;\mathbb{Z})=0$. Suppose that $\tilde{\partial}\Omega:=\partial\Omega \setminus \partial_1^+\Omega$, and that the boundary may be decomposed into a disjoint union
\begin{equation}
\partial\Omega=\big(\sqcup_{i=1}^m\partial^{+}_{i}\Omega\big)\sqcup \big(\sqcup_{i=1}^\ell\partial^-_{i}\Omega\big),
\end{equation}
where the connected components are organized so that $\theta_{+}\left(\partial_i^+ \Omega\right)\geq 0$ with respect to the outer normal, and $\theta_{+}\left(\partial_i^- \Omega\right)\leq 0$ with respect to the inner normal. Moreover, assume that $\partial^+_{1}\Omega$ has positive genus, that $\partial^+_{i}\Omega$ is of zero genus for $i=2,\ldots,m$, and that $\Omega$ satisfies the homotopy condition with respect to $\partial^+_{1}\Omega$. Then the following statements hold.
\begin{enumerate}
\item There are only two boundary components, namely $m=\ell=1$. Indeed, $\Omega$ is diffeomorphic to $[0,t_0]\times T^2$ for some $t_0 >0$.
		
\item Each level set $\Sigma_t=\{t\}\times T^2$ of the radial coordinate $t\in [0,t_0]$ is a MOTS with respect to the normal $\nu_t$ pointing towards $\partial_1^+ \Omega$. In fact, these surfaces have vanishing null second fundamental form $\chi^+=0$.

\item The induced geometry on $\Sigma_t$ is that of a flat torus for all $t\in [0,t_0]$.

\item The energy and momentum densities satisfy $\mu=|J|_g=-J(\nu_t)$ on $\Omega$.
\end{enumerate}
Furthermore, if in addition $k=- \lambda g$ where $\lambda \in C^{\infty}(\Omega)$, then $(\Omega,g)$ is isometric to the warped product $\left([0,t_0]\times T^2, dt^2 + f(t)^2 \hat{g}\right)$, for some flat metric $\hat{g}$ on $T^2$ and a smooth positive radial function $f$ satisfying $\frac{d}{dt}\log f=\lambda$. In particular, if $\lambda=\lambda_0$ is a constant then $(\Omega,g)$ is of constant curvature $-\lambda_0^2$ and $\mu=|J|_g\equiv0$.
\end{theorem}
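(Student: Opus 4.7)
The plan is to adapt the spacetime harmonic function machinery developed for Theorem \ref{energylb} to the compact setting, with the boundary component $\partial_1^+\Omega$ playing the role of the asymptotic end. First, I would solve the Dirichlet problem for equation (\ref{shf}) on $\Omega$, prescribing constant values on each boundary component, say $u=t_0$ on $\partial_1^+\Omega$ and $u=0$ on $\tilde{\partial}\Omega$ for some $t_0>0$. The existence theory from Sections \ref{sec4}--\ref{sec6} adapts with minor modifications; the sign hypotheses $\theta_+\geq 0$ on $\partial_i^+\Omega$ (outer normal) and $\theta_+\leq 0$ on $\partial_i^-\Omega$ (inner normal) are exactly what is needed to produce barriers forcing the admissibility conditions $\partial_\upsilon u\geq 0$ on $\partial\Omega$, with $|\nabla u|$ vanishing somewhere on each component of $\tilde{\partial}\Omega$.

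Next, I would derive the compact-manifold analogue of (\ref{thm1.1}). Combining the Bochner identity for spacetime harmonic functions with the Gauss equation on regular level sets $\Sigma_t=u^{-1}(t)$, the co-area formula, and the Gauss-Bonnet theorem produces an identity schematically of the form
\begin{equation*}
\int_\Omega \left(\frac{|\bar{\nabla}^2 u|^2}{|\nabla u|}+2\left(\mu+J(\nu)\right)|\nabla u|\right)dV
= 4\pi\int_0^{t_0}\chi(\Sigma_t)\,dt + \mathcal{B}_{\partial_1^+\Omega} + \mathcal{B}_{\tilde{\partial}\Omega},
\end{equation*}
where $\mathcal{B}_*$ collects boundary contributions involving $\theta_+$ and $\partial_\upsilon u$. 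The homotopy condition combined with $H_2(\Omega,\tilde{\partial}\Omega;\mathbb{Z})=0$ constrains each regular level set to consist of surfaces homologous to $\partial_1^+\Omega$ together with genus-zero components sitting around $\partial_i^+\Omega$ for $i\geq 2$; an Euler-characteristic computation then yields $\int_0^{t_0}\chi(\Sigma_t)\,dt\leq 0$. Simultaneously, the trapping signs together with $\partial_\upsilon u\geq 0$ force $\mathcal{B}_{\partial_1^+\Omega}+\mathcal{B}_{\tilde{\partial}\Omega}\leq 0$. Under the dominant energy condition the left-hand integrand is pointwise nonnegative, so every term in the identity must vanish.

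The resulting equalities give $\bar{\nabla}^2 u\equiv 0$, $\mu=|J|_g=-J(\nu)$ throughout $\Omega$, and $\chi(\Sigma_t)=0$ for every regular $t$. Vanishing of the spacetime Hessian means $\nabla^2 u+k|\nabla u|=0$; tracing along $\Sigma_t$ yields $\chi^+=II+k|_{\Sigma_t}=0$, exhibiting each level set as a MOTS with vanishing null second fundamental form and giving conclusion (2). Parallelism of $\nabla u$ along level sets then forces $|\nabla u|$ to be constant on each $\Sigma_t$, so $u$ has no critical points and its gradient flow induces a diffeomorphism $\Omega\cong[0,t_0]\times T^2$ after reparameterization by arclength, which is (1). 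Combining $\chi(\Sigma_t)=0$ with pointwise control of the intrinsic Gauss curvature extracted from the Gauss equation and $\bar{\nabla}^2 u=0$ yields flatness of each $\Sigma_t$, which is (3), while (4) is immediate.

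For the umbilic case $k=-\lambda g$, the identity $\chi^+=0$ becomes $II=\lambda g|_{\Sigma_t}$, so each $\Sigma_t$ is totally umbilic with mean curvature $2\lambda$. In Gaussian normal coordinates along the foliation, $\partial_t g_t=2II_t=2\lambda g_t$ integrates to $g_t=f(t)^2\hat{g}$ with $\hat{g}$ a $t$-independent flat metric and $f'/f=\lambda$; constancy of $\hat{g}$ in $t$ uses that $|\nabla u|$ is constant on each level set. When $\lambda=\lambda_0$ is constant, $f$ is exponential and the metric has constant sectional curvature $-\lambda_0^2$, after which substitution into the Hamiltonian constraint forces $\mu=|J|_g\equiv 0$. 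The principal obstacle is the careful sign analysis of $\mathcal{B}_{\partial_1^+\Omega}$ and $\mathcal{B}_{\tilde{\partial}\Omega}$, where the various trapping conditions (mixed $\theta_+\geq 0$ and $\theta_+\leq 0$), normal orientations, and Euler characteristics of the genus-zero auxiliary components must conspire to yield a nonpositive total. A secondary technical issue will be handling critical level sets of $u$ via Sard's theorem and an approximation procedure, following the practices established in the level-set proofs cited in Section \ref{sec1}.
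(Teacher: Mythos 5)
Your high-level strategy matches the paper's (spacetime harmonic function, Bochner identity, level-set topology, dominant energy condition forces equality), but there is a genuine gap in the choice of boundary data that breaks the level-set topology argument whenever more than two boundary components are present.

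You propose setting $u=t_0$ on $\partial_1^+\Omega$ and $u=0$ uniformly on all of $\tilde{\partial}\Omega$, and claim the trapping hypotheses then ``produce barriers forcing the admissibility conditions $\partial_\upsilon u\geq 0$ on $\partial\Omega$, with $|\nabla u|$ vanishing somewhere on each component of $\tilde\partial\Omega$.'' This does not work. With $u=0$ on $\partial_i^+\Omega$ for $i\geq 2$, that component is at the global minimum, so the Hopf lemma gives $\mathbf{n}(u)<0$ strictly, i.e.\ $\partial_\upsilon u<0$ there since $\upsilon$ is the outer normal on such a component. The trapping hypothesis $\theta_+(\partial_i^+\Omega)\geq 0$ is a geometric condition on $H$ and $\mathrm{Tr}\,k$ and has no bearing on the sign of the normal derivative of $u$; no barrier argument can rescue this. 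Moreover, with plain Dirichlet data the Hopf lemma gives $|\nabla u|>0$ everywhere on $\partial\Omega$, so there can be no point where $|\nabla u|$ vanishes, and the admissibility requirement you invoke is contradicted rather than produced. The paper handles this by solving a fixed-point problem (Proposition~\ref{lem:specialbd-app}) for intermediate Dirichlet constants $a_i^\pm\in(0,1)$ chosen so that $\min_{\partial_i^\pm\Omega}\partial_\upsilon u = 0$; this is exactly what makes $\partial_\upsilon u\geq 0$ hold with a zero, and what eventually forces $m=\ell=1$ once $|\nabla u|>0$ on all of $\Omega$ is derived.

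This is not a cosmetic issue: the 2-sphere exclusion for level sets relies on it. If $\Sigma'_t$ is a spherical component of a regular level set bounding a region $D$ whose other boundary is, say, $\partial_2^+\Omega$, then with your data $u$ attains its max on $\Sigma'_t$ and its min on $\partial_2^+\Omega$, which is perfectly consistent and gives no contradiction. In the paper's setting the Hopf lemma at a zero of $\partial_\upsilon u$ on $\partial_2^+\Omega$ rules out an extremum there, forcing both extrema onto $\Sigma'_t$ and contradicting regularity. You also omit the role of Proposition~\ref{lem:separable}, which uses the homotopy condition to single out a $\partial_1^-\Omega$ not separable from $\partial_1^+\Omega$ by a 2-sphere; this is what allows one to normalize the homology class so that both $\partial_1^\pm\Omega$ are excluded from $D$ in the 2-sphere argument. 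Your phrase ``an Euler-characteristic computation then yields $\int_0^{t_0}\chi(\Sigma_t)\,dt\leq 0$'' elides this, which is the crux. Finally, the flatness of the level sets is not an immediate consequence of the Gauss equation and $\bar\nabla^2 u=0$; the paper derives it from Galloway's first-variation formula for the null expansion, multiplying by $f=|\nabla u|$, integrating by parts, and combining with Gauss--Bonnet and $\chi(\Sigma_t)=0$. The remainder of your proposal (rigidity once equality holds, the warped-product form for $k=-\lambda g$) aligns with the paper.
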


As pointed out in \cite{EGM}, the setting of Theorem \ref{thm:EGM} naturally arises in the context of Lohkamp's approach to the asymptotically flat version of the positive mass theorem. Namely, his method relies upon showing that an initial data set $(M,g,k)$ which is isometric to Euclidean space outside a bounded open set $U$ and with $k=0$ there as well, cannot have a strict  dominant energy condition $\mu>|J|_g$ on $U$ \cite[Theorem 2]{Lohkamp}. By taking a large cube enclosing $U$ and identifying two opposing pairs of sides, we obtain a compact initial data set $(\Omega,g,k)$ in which $\Omega$ is diffeomorphic to the connected sum $\left([0,1]\times T^2\right)\,\sharp\, N$ for some compact manifold $N$. The two boundary tori have vanishing null second fundamental form, and so they are MOTS. Therefore, if $H_2(\Omega,\tilde{\partial} \Omega;\mathbb{Z})=0$ then Theorem \ref{thm:EGM} confirms that this configuration with strict dominant energy condition on $U$ is not possible.

Motivated by the case of equality from the positive mass theorem in the asymptotically flat and asymptotically hyperboloidal (spherical infinity) settings, it is reasonable to suspect that
the rigidity statements of Theorems \ref{thm:rigidity} and \ref{thm:EGM}, when $k=-\lambda g$, might be generalized to produce an embedding of the initial data into a model flat spacetime.
In this direction, under a related hypothesis on $k$, Eichmair-Galloway-Mendes \cite[Theorem 6.1]{EGM} confirm such a result by showing that the data embed into a quotient of Minkowski space. In contrast, we provide in Section \ref{sec7} an example which shows that the restrictions on the structure of $k$ cannot be relaxed too far.

\begin{example}
There exist initial data $(M,g,k)$ satisfying the hypotheses of Theorem \ref{thm:rigidity} or \ref{thm:EGM} minus the assumption on the structure of $k$, while additionally exhibiting a vanishing mass aspect function (in the noncompact case) and vanishing energy and momentum densities $\mu=|J|_g=0$,
with the following properties. Unlike the conclusion of Theorems \ref{thm:rigidity} and \ref{thm:EGM}, the metric $g$ does not have a warped product structure, and in a departure from the conclusion of \cite[Theorem 6.1]{EGM} the initial data arise from a vacuum (with zero cosmological constant) pp-wave spacetime which is not flat.
\end{example}

This paper is organized as follows. In Section \ref{sec2} the topology of initial data sets is examined under the hypotheses of the main theorems, while in Section \ref{sec3} an integral identity for spacetime harmonic functions is presented. Existence and uniqueness of appropriate boundary value problems for spacetime harmonic functions is established in Section \ref{sec4}. The proof of Theorem \ref{thm:EGM} is presented in Section \ref{sec5}, while the proof of Theorems \ref{thm:PMT}, \ref{thm:rigidity}, and \ref{energylb} are presented in Section \ref{sec6}. Lastly, the example described in the preceding paragraph is given in Section \ref{sec7}.

\section{The Topology of Initial Data Sets}
\label{sec2}
\setcounter{equation}{0}
\setcounter{section}{2}

The purpose of this section is to record an attribute of the initial data that will be instrumental in controlling the level set topology for admissible spacetime harmonic functions. Recall that the statements of the main results described in the previous section imply that there is an embedded surface of positive genus with respect to which the initial data satisfies the homotopy condition. The desired property of the initial data, to be elucidated here, essentially says that this surface cannot be shielded from all other boundary components by a 2-sphere. To state this in a precise manner, we require certain definitions. Let $\Omega$ be a smooth 3-manifold and $\Sigma\subset \Omega$ be a properly embedded surface, that is, $\Sigma\cap\partial\Omega=\partial\Sigma$ and if this intersection is nonempty it is transverse. The notation $\Omega|\Sigma$ will be used to denote the \textit{splitting} of $\Omega$ along $\Sigma$. Intuitively, this is the possibly disconnected 3-manifold obtained from $\Omega$ by cutting along $\Sigma$. See \cite[page 3]{H} for a detailed description. Furthermore, if $n_1\leq n_2$ are two integers, then $\llbracket n_1,n_2\rrbracket$ will be used to denote the set of integers lying between and including $n_1$ and $n_2$.

\begin{definition}\label{def:separable}
Let $S_1$ and $S_2$ be connected components of $\partial\Omega$. We say $S_1$ and $S_2$ are separable by a 2-sphere, if there exists a properly embedded 2-sphere $\Sigma$ such that $S_1$ and $S_2$ belong to different connected components of $\Omega|\Sigma$. In this case, we say that $\Sigma$ separates $S_1$ and $S_2$.
\end{definition}

\begin{proposition}\label{lem:separable}
Let $\Omega$ be a compact, oriented, connected smooth 3-manifold with boundary $\partial\Omega$. Suppose that $\partial\Omega$ has at least two connected components, and may be decomposed as
\begin{equation}
\partial\Omega=\big(\sqcup_{i=1}^m\partial^{+}_{i}\Omega\big)\sqcup \big(\sqcup_{i=1}^\ell\partial^-_{i}\Omega\big)
\end{equation}
such that the following statements hold.
\begin{enumerate}
\item $\partial^+_1\Omega$ has positive genus.
\item $\partial^+_i\Omega$ are homeomorphic to 2-spheres for $i\in\llbracket 2,m \rrbracket$.
\item $\Omega$ satisfies the homotopy condition with respect to $\partial^+_1\Omega$.
\end{enumerate}
Then there exists $i_0\in \llbracket 1,\ell\rrbracket $ such that $\partial^-_{i_0}\Omega$ is not separable from $\partial^+_1\Omega$ by a 2-sphere.
\end{proposition}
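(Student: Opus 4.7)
The plan is to argue by contradiction via the classical ``half lives, half dies'' theorem. Suppose every $\partial^-_i\Omega$ is separable from $\partial^+_1\Omega$ by a 2-sphere. The idea is to cut $\Omega$ along such 2-spheres and cap the resulting spherical cross-sections with 3-balls, producing a compact orientable 3-manifold $W$ whose only boundary component is $\partial^+_1\Omega$ and which still satisfies the homotopy condition with respect to $\partial^+_1\Omega$. Half-dies, applied to this $W$, will then force a positive-dimensional kernel in $i_*:H_1(\partial^+_1\Omega;\mathbb{Q})\to H_1(W;\mathbb{Q})$, contradicting the injectivity of $i_*$ enforced by the homotopy condition.

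First, for each $i\in\llbracket 1,\ell\rrbracket$ I would choose a properly embedded 2-sphere $\Sigma_i\subset\mathrm{int}\,\Omega$ separating $\partial^+_1\Omega$ from $\partial^-_i\Omega$. A standard innermost-disk surgery argument lets one replace this collection by a pairwise disjoint family that retains each individual separation property: whenever two of the spheres meet transversely in circles, an innermost disk on one is used to compress the other, producing two 2-spheres of strictly smaller intersection number, at least one of which still separates the prescribed boundary pair. Next, let $W_0$ denote the connected component of $\Omega|(\Sigma_1\sqcup\cdots\sqcup\Sigma_\ell)$ containing $\partial^+_1\Omega$. By construction no $\partial^-_i\Omega$ lies in $W_0$, so $\partial W_0$ is the disjoint union of $\partial^+_1\Omega$, a subcollection of the $\Sigma_i$'s, and possibly some of the components $\partial^+_i\Omega$ with $i\in\llbracket 2,m\rrbracket$. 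Since every boundary component of $W_0$ other than $\partial^+_1\Omega$ is a 2-sphere, capping each one off with a 3-ball produces a compact orientable 3-manifold $W$ with $\partial W=\partial^+_1\Omega$.

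To transfer the homotopy condition to $W$, let $\rho:\Omega\to\partial^+_1\Omega$ be the map supplied by hypothesis. The restriction $\rho|_{W_0}$ remains a homotopy left inverse to the inclusion of $\partial^+_1\Omega$, and on each capped-off 2-sphere it determines a continuous map $S^2\to\partial^+_1\Omega$. Since $\partial^+_1\Omega$ has genus at least one its universal cover is contractible, whence $\pi_2(\partial^+_1\Omega)=0$, and each such map extends continuously across the attached 3-ball. The resulting map $\hat\rho:W\to\partial^+_1\Omega$ satisfies $\hat\rho\circ i\simeq\mathrm{id}_{\partial^+_1\Omega}$, hence $\hat\rho_*\circ i_*=\mathrm{id}$ on $H_1(\partial^+_1\Omega;\mathbb{Q})$ and $i_*:H_1(\partial^+_1\Omega;\mathbb{Q})\to H_1(W;\mathbb{Q})$ is injective. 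On the other hand, the half-lives/half-dies theorem for compact orientable 3-manifolds with boundary yields
\[
\dim_{\mathbb{Q}}\ker\bigl(i_*:H_1(\partial W;\mathbb{Q})\to H_1(W;\mathbb{Q})\bigr)=\tfrac{1}{2}\dim_{\mathbb{Q}} H_1(\partial W;\mathbb{Q})=g\geq 1,
\]
where $g$ is the genus of $\partial W=\partial^+_1\Omega$. This contradiction will establish the proposition.

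The main technical obstacle I anticipate is the first step, namely disjointing the separating 2-spheres while keeping track of which boundary pair each of them separates; this is a standard but delicate manipulation in 3-manifold topology. Everything else reduces to elementary algebraic topology, together with the Poincar\'{e}--Lefschetz duality input packaged in the half-lives/half-dies theorem.
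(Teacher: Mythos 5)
Your proof is correct, and it reaches the same intermediate geometric picture as the paper — a sub\-manifold containing $\partial^+_1\Omega$ whose remaining boundary components are all 2-spheres — but it gets there and exploits it by genuinely different means. The paper does not manipulate the hypothesized separating spheres directly; instead it invokes the prime decomposition via Milnor's \emph{reduction systems} (Definition~\ref{def:Sigma} and Lemmas~\ref{lem:prime}--\ref{cor:B7}), showing that separable boundary components must land in distinct punctured prime factors, so the factor $Q_{j_1}$ containing $\partial^+_1\Omega$ has only that component and spheres as boundary. You replace this machinery by the more hands-on route of choosing one separating sphere per $\partial^-_i\Omega$ and disjointing them by innermost-disk compressions; that step is correct (mod-2 intersection with arcs joining the two boundary components shows exactly one of the two compressed spheres still separates the pair, and choosing the compressing disk globally innermost makes the total intersection count strictly decrease), but as you note it is the delicate part and deserves the mod-2 justification spelled out. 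The second difference is the algebraic endgame: the paper pushes the relation $[\partial^+_1\Omega]+[\text{spheres}]=0$ in $H_2(Q_{j_1};\mathbb{Z})$ forward under $\rho$ and uses that any map $S^2\to\Sigma_g$ ($g\geq1$) has degree zero, a short $H_2$ argument requiring no capping. You instead cap the spherical boundary components (which needs the $\pi_2(\partial^+_1\Omega)=0$ extension step you supply) to get $W$ with $\partial W=\partial^+_1\Omega$ and then cite the half-lives/half-dies theorem on $H_1(\,\cdot\,;\mathbb{Q})$. Both endgames are standard; yours imports Poincar\'e--Lefschetz duality packaged as half-lives/half-dies, while the paper's is slightly more elementary once $Q_{j_1}$ is in hand (indeed, the paper's $H_2$ argument would apply to your $W_0$ without any capping). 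The net trade-off: you avoid prime decomposition at the cost of a delicate surgery bookkeeping step plus the duality input, whereas the paper front-loads the topological heavy lifting into the reduction-system lemmas — which it also reuses — and then finishes with a lighter homological computation.
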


\begin{remark}
There are large classes of manifolds beyond the model $[0,1]\times T^2$ which satisfy the conditions of Proposition \ref{lem:separable}. For instance, consider $\Omega=([0,1]\times S_g)\,\sharp\, N$, where $S_g$ is an orientable closed surface of genus $g\geq 1$ and $N$ is any orientable closed 3-manifold. Here $m=\ell=1$, $\partial_1^+\Omega=\{1\}\times S_g$, and $\partial_1^- \Omega=\{0\}\times S_g$. As discussed in \cite{EGM}, these manifolds satisfy the homotopy condition with respect to $\partial_1^+\Omega$. Furthermore, a calculation also shows that they satisfy the homology condition $H_2(\Omega,\tilde{\partial}\Omega;\mathbb{Z})=0$ of Theorem \ref{thm:EGM}, if $H_2(N;\mathbb{Z})=0$.
\end{remark}

The goal of this section is to establish Proposition \ref{lem:separable}.
We begin by recalling a formulation of the prime decomposition.
As above let $\Omega$ be a compact, oriented, connected 3-manifold with possibly non-empty boundary. $\Omega$ is called \textit{prime} if $\Omega=\Omega'\,\sharp\, \Omega''$ implies that either $\Omega'$ or $\Omega''$ is $S^3$; here $\sharp$ stands for the connected sum.
Moreover $\Omega$ is called \textit{irreducible} if every $2$-sphere $S^2 \subset \Omega$ bounds a 3-ball. It is well known \cite[Proposition 1.4]{H} that the only orientable prime 3-manifold which is not irreducible is $S^1\times S^2$. A version of the prime decomposition theorem \cite[Theorem 1.5]{H}
states that there exist irreducible 3-manifolds $\Omega_1,\Omega_2,\dots, \Omega_k$ and a nonnegative integer $l$ such that $\Omega$ is homeomorphic to the connected sum
\begin{equation}\label{equ:primedecomposition}
\Omega= \Omega_1\,\sharp\, \Omega_2\,\sharp\, \dots \,\sharp\,  \Omega_k\,\sharp\, l (S^1\times S^2).
\end{equation}
Furthermore, the decomposition is unique up to order and insertion or deletion of 3-spheres. We remark that the $\Omega_i$'s may have non-empty boundary.
In order to keep track of which prime summand contains particular boundary components of $\Omega$, it is helpful to utilize the concept of a \textit{reduction system}, which was introduced by Milnor \cite{M} (see also \cite[page 7]{H}) in the context of proving the uniqueness for the prime decomposition.

\begin{definition}\label{def:Sigma}
Let $\Omega$ be a compact, oriented, connected smooth 3-manifold, and let
$\Sigma$ be a family of disjoint, properly embedded 2-spheres in $\Omega$. We say that $\Sigma$ is a reduction system if
\begin{equation}\label{i2nbghh}
\Omega|\Sigma=\left(\sqcup_{j=1}^k Q_j\right) \sqcup \left( \sqcup_{j=1}^l R_j \right),
\end{equation}
where $Q_j$, $j\in\llbracket 1,k \rrbracket$ is obtained from the prime factor $\Omega_j$ by removing finitely many open 3-balls, while $R_j$, $j\in\llbracket 1,l \rrbracket$ is homeomorphic to $S^3$ with finitely many open 3-balls removed. We call $Q_j$ a punctured $\Omega_j$, and $R_j$ a punctured 3-sphere. Furthermore, by decomposing the boundary into components
$\partial\Omega=\sqcup_{i=1}^n \partial_i\Omega$, we may construct the reduction system correspondence $j_{\Sigma}:\llbracket 1,n \rrbracket\to \llbracket 1,k \rrbracket$ that associates to each component $\partial_i \Omega$ the unique punctured prime factor $Q_{j_{\Sigma}(i)}$ in which it is contained.
\end{definition}

Stated informally, $\Sigma$ is a reduction system if by cutting $\Omega$ along $\Sigma$, the prime decomposition is obtained where each punctured 3-sphere is associated with a $S^1\times S^2$ summand. Moreover, the reduction system correspondence map $j_{\Sigma}$ records the irreducible piece in which the boundary components $\partial_i\Omega$ lie. The next result plays an important role in the proof of Proposition \ref{lem:separable}, and states that a reduction system may be modified to avoid a given 2-sphere.

\begin{lemma}\label{lem:prime}
Let $\Sigma$ be a reduction system as in Definition \ref{def:Sigma}, and let $S\subset \Omega$ be a properly embedded 2-sphere. Then there exists another reduction system $\tilde{\Sigma}$ which is disjoint from $S$. Moreover, the boundary components of $\partial\Omega$ still belong to the same irreducible pieces, that is, $j_\Sigma(i)=j_{\tilde{\Sigma}}(i)$ for all $i\in \llbracket 1,n \rrbracket$.
\end{lemma}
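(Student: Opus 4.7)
The plan is to use the classical innermost disk surgery technique from 3-manifold topology to successively reduce the number of intersection circles of $\Sigma$ with $S$.

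After a small isotopy I may assume $S$ is transverse to $\Sigma$, so that $S\cap\Sigma$ is a finite disjoint union of embedded circles. The argument then proceeds by induction on $|S\cap\Sigma|$; the base case of an empty intersection is trivial, in which case one takes $\tilde\Sigma=\Sigma$. For the inductive step, I choose an innermost circle $C$ on the 2-sphere $S$, namely one bounding a disk $D\subset S$ whose interior is disjoint from $\Sigma$. Let $\Sigma_0\in\Sigma$ be the 2-sphere containing $C$, so $C$ decomposes $\Sigma_0$ into two closed disks $D_1$ and $D_2$. I then perform the standard surgery on $\Sigma_0$ along $D$: taking two parallel push-offs $D',D''$ of $D$ and shrinking $D_1,D_2$ slightly to abut them along smooth curves, I form the two disjoint 2-spheres
\[
\Sigma_0'=D_1\cup D',\qquad \Sigma_0''=D_2\cup D'',
\]
and set $\hat\Sigma=(\Sigma\setminus\{\Sigma_0\})\cup\{\Sigma_0',\Sigma_0''\}$. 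By construction the circle $C$ is absent from $\hat\Sigma\cap S$, and since the push-offs may be chosen disjoint from $S$ near $D$, no new intersections are created, hence $|\hat\Sigma\cap S|<|\Sigma\cap S|$.

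The key step is to verify that $\hat\Sigma$, after possibly discarding any resulting 2-spheres that bound 3-balls in $\Omega$, is again a reduction system with the same correspondence $j_{\hat\Sigma}=j_\Sigma$. The essential geometric input is that the thin slab of $\Omega$ bounded by $\Sigma_0$, $\Sigma_0'$, and $\Sigma_0''$ is a 3-ball. Consequently, cutting along $\hat\Sigma$ differs from cutting along $\Sigma$ only by splitting the piece $P$ of $\Omega|\Sigma$ containing $D$ along the properly embedded disk $D$, together with a harmless 3-ball insertion. Since $P$ is either a punctured irreducible prime factor $Q_j$ or a punctured 3-sphere $R_j$, and $\partial D=C$ bounds disks $D_1,D_2$ on the 2-sphere boundary component of $P$ corresponding to $\Sigma_0$, irreducibility of the closed prime factors of $\Omega$ (together with Alexander's theorem for the $S^3$ pieces) implies that at least one of $\Sigma_0',\Sigma_0''$ bounds a 3-ball in $\Omega$. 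Discarding any such trivial sphere and identifying each surviving piece of $\Omega|\hat\Sigma$ as a punctured irreducible prime factor or punctured 3-sphere realizes $\hat\Sigma$ as a reduction system. Because no boundary component of $\Omega$ lies in the support of the surgery, the correspondence $j$ is preserved.

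Iterating this surgery strictly decreases $|\Sigma\cap S|$, so after finitely many steps I obtain the desired $\tilde\Sigma$ with $\tilde\Sigma\cap S=\emptyset$ and $j_{\tilde\Sigma}=j_\Sigma$. I expect the main difficulty to lie in the key verification above, where one must track cases according to whether $D$ is separating in $P$ and whether $D_1,D_2$ lie on the same 2-sphere component of $\partial P$. This bookkeeping is essentially the same combinatorial analysis used by Milnor and Hatcher in proving uniqueness of the prime decomposition, and should be carried out systematically, using the unique factorization of $\Omega$ to match the new pieces with the prime factors $\Omega_j$ of $\Omega$.
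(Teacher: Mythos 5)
Your overall plan---innermost-disk surgery to reduce $|\Sigma\cap S|$, in the style of Milnor and Hatcher---is the same as the paper's. However, the "key verification" step is incorrect as written, in two places. First, the region of $\Omega$ bounded by the three spheres $\Sigma_0,\Sigma_0',\Sigma_0''$ cannot be a 3-ball (a 3-ball has exactly one boundary sphere); it is a $3$-punctured $S^3$, and this distinction is precisely what makes the bookkeeping nontrivial. Second, and more seriously, the assertion that irreducibility forces at least one of $\Sigma_0',\Sigma_0''$ to bound a $3$-ball \emph{in $\Omega$} is false. For a counterexample, take $\Omega=\Omega_1\,\sharp\,\Omega_2\,\sharp\,\Omega_3$ with each $\Omega_i$ an irreducible manifold with nonempty boundary (e.g.\ a solid torus), and choose a reduction system $\Sigma=\{\Sigma_1,\Sigma_2,\Sigma_3\}$ so that $\Omega|\Sigma$ consists of three punctured factors $Q_1,Q_2,Q_3$ together with one central $3$-punctured $S^3$ piece $R_1$ having boundary $\Sigma_1\sqcup\Sigma_2\sqcup\Sigma_3$. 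If $D\subset R_1$ with $\partial D\subset\Sigma_1$ is chosen so that $\Sigma_2$ and $\Sigma_3$ lie on opposite sides of $D$ in $R_1$, then $\Sigma_0'$ and $\Sigma_0''$ are each parallel (up to a punctured $S^3$) to $\Sigma_2$ and $\Sigma_3$ respectively, and neither bounds a $3$-ball in $\Omega$. Irreducibility only guarantees that the surgered spheres bound balls inside the closed prime factor $\Omega_{j_\Sigma(\cdot)}$, and such a ball may contain punctures of $Q_j$ that are glued to nontrivial summands of $\Omega$.

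The fix is the route the paper takes: rather than discarding a ball-bounding sphere, observe that $\Sigma_0$, $\Sigma_0'$, $\Sigma_0''$ jointly bound a $3$-punctured $S^3$, and factor the replacement $\Sigma\rightsquigarrow\hat\Sigma$ into three elementary "updates'' whose effect on the decomposition \eqref{i2nbghh} is understood in general: add $\Sigma_0'$ (splitting one piece into a punctured prime and a punctured $S^3$, or a punctured $S^3$ into two punctured $S^3$'s), then add $\Sigma_0''$, and finally delete $\Sigma_0$, which now borders the punctured $S^3$ slab and is therefore removable by the second type of update. Each of these elementary moves manifestly preserves the reduction-system property and the correspondence $j$, with no need for the false ball claim. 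Your argument in its current form handles only the case where $D$ is inessential in the piece $P$ and omits the generic case where it is essential.
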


\begin{proof}
We follow closely the arguments of \cite[page 7]{H}. The main idea is to gradually decrease the number of curves in $S\cap\Sigma$ by replacing $\Sigma$ with an `update' in a systematic manner. We begin by describing the types of updates that will be employed, and observe how the $Q_i$ and $R_i$ change in the process.

\begin{figure}
\centering
\begin{minipage}{0.45\textwidth}
\centering
\includegraphics[width=0.77\textwidth]{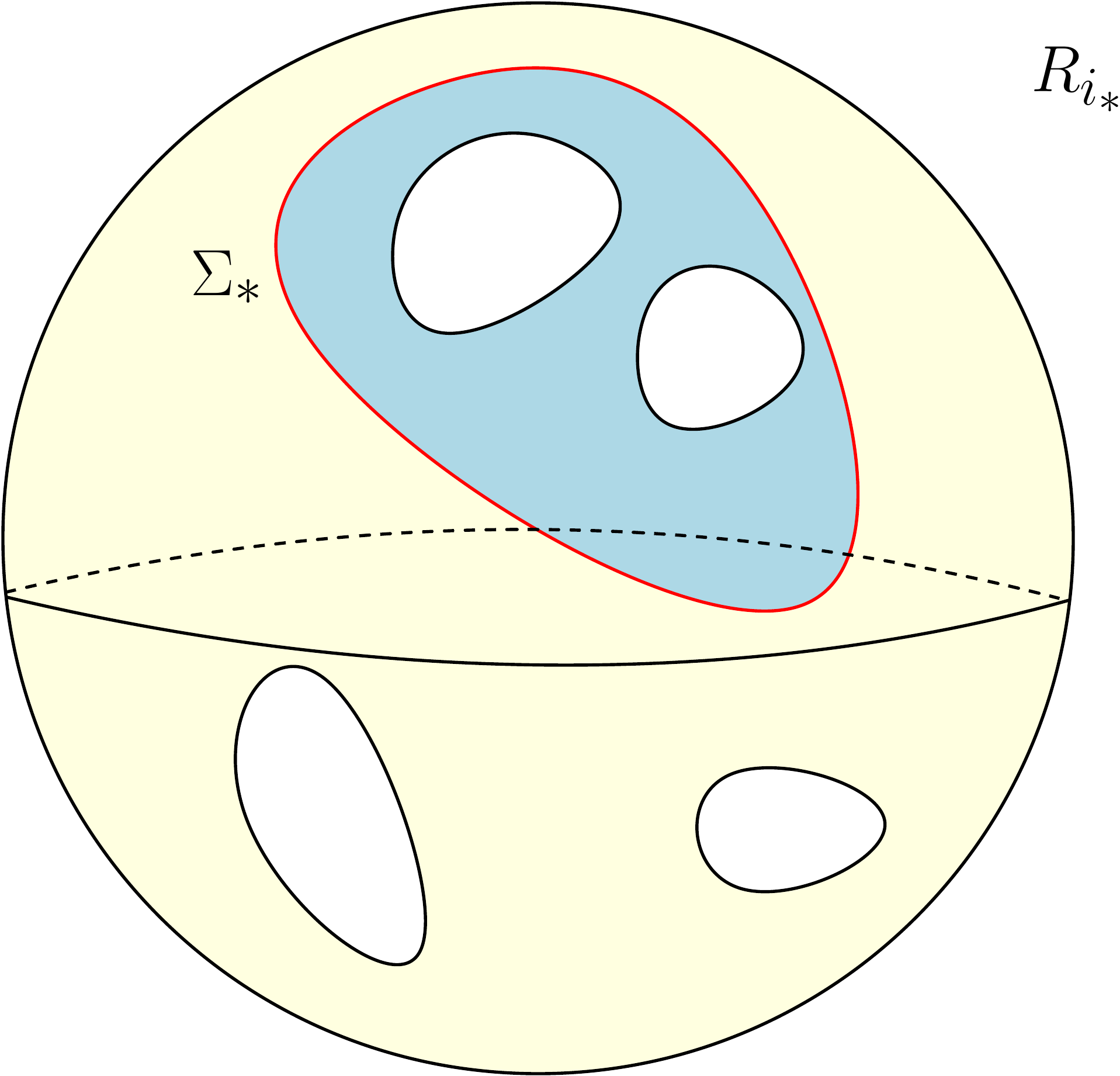} 
\end{minipage}\hfill
\begin{minipage}{0.45\textwidth}
\centering
\includegraphics[width=0.77\textwidth]{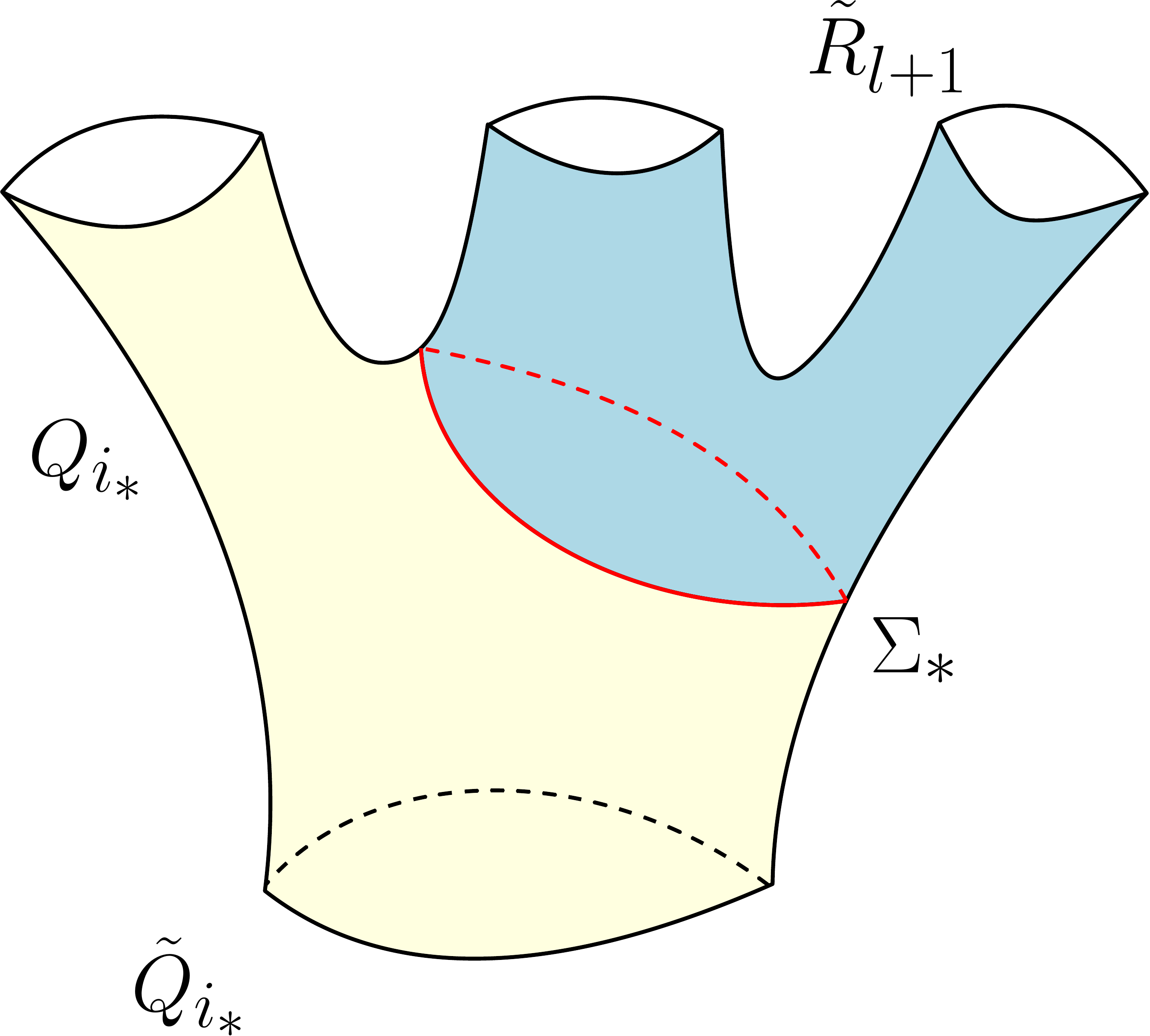} 
\end{minipage}
\caption{An illustration of the procedure that adds one more sphere into the reduction system.}\label{fig:1}
\end{figure}

The first type of modification consists of adding another properly embedded 2-sphere $\Sigma_{*}$, which is disjoint from $\Sigma$. More precisely, we set $\tilde{\Sigma}\coloneqq\Sigma\sqcup \Sigma_*$. Because $\Sigma_*$ is disjoint from $\Sigma$, $\Sigma_*$ must be properly embedded in either a $ Q_i$ or
$R_i$. Suppose that $\Sigma_*\subset R_{i_*} $, then all the $Q_i$ are unchanged and the $R_i$ for $i\neq i_*$ are also unchanged. Furthermore, $R_{i_*}$ splits into two punctured 3-spheres. See the left portion of Figure~\ref{fig:1} for an illustration. Now suppose that $\Sigma_*\subset   Q_{i_*}$, then the $Q_i$ with $i\neq i_*$ are unchanged and all $R_i$ are  unchanged as well. Moreover, $Q_{i_*}$ splits into two pieces $\tilde{Q}_{i_*}$ and $\tilde{R}_{l+1}$, where $\tilde{Q}_{i_*}$ is a punctured $\Omega_{i_*}$ which replaces $Q_{i_*}$ for the reduction system $\tilde{\Sigma}$, whereas $\tilde{R}_{l+1}$ is an additional punctured 3-sphere. Here we used the fact that $Q_{i_*}$ is a punctured $\Omega_{i_*}$, and that $\Omega_{i_*}$ is irreducible. See the right portion of Figure~\ref{fig:1} for an illustration.

The second type of update consists of eliminating a component $\Sigma_0 \subset\Sigma$ which satisfies the property that it lies at the transition between two different pieces of the decomposition \eqref{i2nbghh}, one of which is a punctured 3-sphere. More precisely, we require one of the following conditions to hold; where an overline bar denotes the closure of a set.
\begin{enumerate}
\item There exist $j_0\in \llbracket 1,k \rrbracket$ and $j_1\in \llbracket 1,l \rrbracket$ such that $\Sigma_0\subset  \overline{Q}_{j_0} \cap \overline{R}_{j_1} $.
\item There exist $j_0, j_1\in \llbracket 1,l \rrbracket$ with $j_0 \neq j_1$ such that $\Sigma_0\subset  \overline{R}_{j_0} \cap  \overline{R}_{j_1} $.
\end{enumerate}
In this situation we set $\tilde{\Sigma}\coloneqq \Sigma\setminus\Sigma_0$. Suppose that (2) holds. Then all the $Q_i$ are unchanged and the $R_i$ with $i\neq j_0,j_1$ are also unchanged. Moreover, the $R_{j_0}$ and $R_{j_1}$ may be glued together to form a single punctured $S^3$. See the left portion of Figure~\ref{fig:2} for an illustration. Thus, in this case, the decomposition \eqref{i2nbghh} associated with the new reduction system has one less punctured 3-sphere. Suppose now that (1) holds. Then all the $Q_i$ with $i\neq j_0$ remain unchanged, and the $R_i$ with $i\neq j_1$ are also unchanged. Furthermore, $Q_{j_0}$ and $R_{j_1}$ may be glued together to form $\tilde{Q}_{j_0}$, which is still a punctured $\Omega_{j_0}$ and replaces $Q_{j_0}$ in the decomposition for $\tilde{\Sigma}$. See the right portion of Figure~\ref{fig:2} for an illustration. We note that throughout these two update procedures, the maps $j_{\Sigma}, j_{\tilde{\Sigma}} :\llbracket 1,n \rrbracket\to \llbracket 1,k \rrbracket$ are identical.

\begin{figure}
\centering
\begin{minipage}{0.48\textwidth}
\centering
\includegraphics[width=\textwidth]{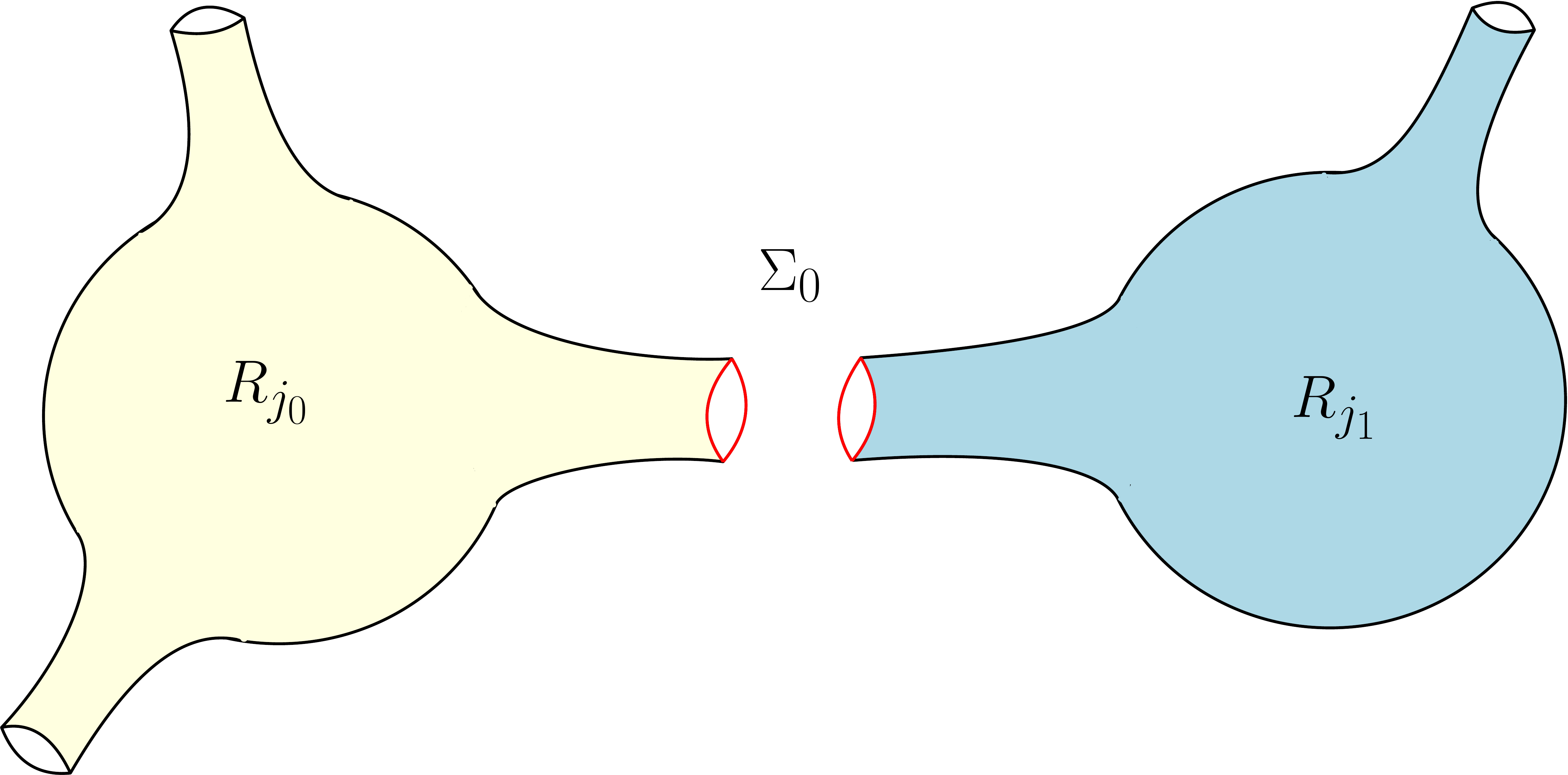} 
\end{minipage}\hfill
\begin{minipage}{0.48\textwidth}
\centering
\includegraphics[width=\textwidth]{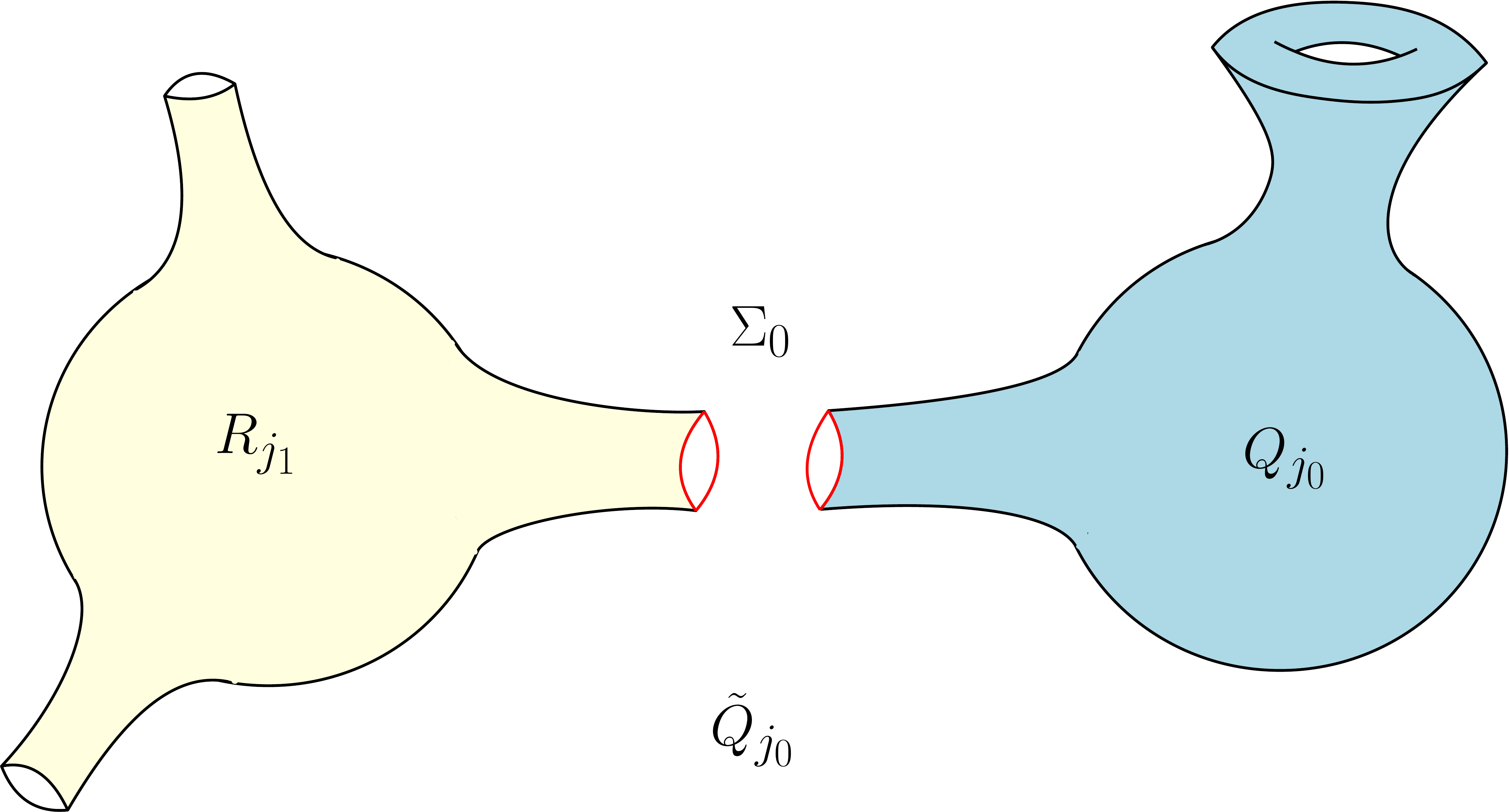} 
\end{minipage}
\caption{An illustration of the procedure that removes one sphere from the reduction system.}\label{fig:2}
\end{figure}

We are ready to begin the process of decreasing the number of curves in the intersection between $\Sigma$ and $S$. See Figure~\ref{fig:3} for an illustration. By perturbing $\Sigma$, we may assume that $\Sigma$ and $S$ intersect transversally. If $S\cap\Sigma=\varnothing$ then we are done, so assume that $S\cap\Sigma\neq \varnothing$. Let $C\subset S\cap \Sigma$ be an innermost closed curve which lies in some component $\Sigma_1$ of $\Sigma$. That is, $C$ bounds a closed disk $D$ in $S$ such that $\textup{int}(D)\cap \Sigma=\varnothing$. Let $D_\Sigma'$ and $D_\Sigma''$ be the closed disks on $\Sigma_1$ which are bounded by $C$. Define ${\Sigma}'_1\coloneqq D\cup D_\Sigma' $ and ${\Sigma}''_1\coloneqq D\cup D_\Sigma''$. Both ${\Sigma}'_1$ and ${\Sigma}''_1$ are properly embedded 2-spheres. By perturbing $\Sigma'_1$ and $\Sigma''_1$ appropriately, the result of which will be denoted by $\tilde{\Sigma'_1}$ and $\tilde{\Sigma}''_1$, we may achieve the following properties.
\begin{enumerate}
\item $\tilde{\Sigma}'_1$ and $\tilde{\Sigma}''_1$ are disjoint, and both are disjoint from $\Sigma$.
\item $\tilde{\Sigma}'_1$ and $\tilde{\Sigma}''_1$ intersect $S$ transversally. Furthermore, the number of closed curves in $S\cap(\tilde{\Sigma}'_1\sqcup \tilde{\Sigma}''_1)$, compared to $S\cap\Sigma_1$, is decreased by 1.
\item $\Sigma_1$, $\tilde{\Sigma}'_1$ and $\tilde{\Sigma}''_1$ bound a $3$-punctured $S^3$.
\end{enumerate}
In the depiction on the right-hand side of Figure~\ref{fig:3}, the above requirements can be satisfied by shrinking $\Sigma'_1$ and $\Sigma''_1$ to obtain $\tilde{\Sigma}'_1$ and $\tilde{\Sigma}''_1$.  We then replace $\Sigma$ by $\tilde{\Sigma}:=\big(\Sigma\sqcup \tilde{\Sigma}'_1\sqcup \tilde{\Sigma}''_1\big)\setminus \Sigma_1$. From (1), (3), and discussion concerning the two previous updates above, $\tilde{\Sigma}$ is still a reduction system and $j_\Sigma=j_{\tilde{\Sigma}}$. From (2), the number of closed curves in $S\cap \tilde{\Sigma}$, compared with $S\cap {\Sigma}$, is decreased by 1. This construction may be repeated until the new reduction system is disjoint from $S$, yielding the desired result.
\end{proof}

We will now utilize the ability to find a reduction system disjoint from a given 2-sphere, to show that if two boundary components of $\Omega$ are indeed separable by a 2-sphere, then they must belong to different irreducible pieces of the prime decomposition.

\begin{lemma}\label{cor:B7}
Let $\Omega$ be as in Definition \ref{def:Sigma}. Suppose that boundary components $\partial_{i_1}\Omega$ and $\partial_{i_2}\Omega$ are separable by a 2-sphere. Then for any reduction system $\Sigma$, the boundary components $\partial_{i_1}\Omega$ and $\partial_{i_2}\Omega$ must belong to different punctured irreducible pieces in the decomposition \eqref{i2nbghh}. That is, $j_{\Sigma}(i_1)\neq j_{\Sigma}(i_2)$.
\end{lemma}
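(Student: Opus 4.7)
My plan is to argue by contradiction using Lemma \ref{lem:prime} as the pivotal tool. Suppose that $j_{\Sigma}(i_1)=j_{\Sigma}(i_2)=j_0$, so that both $\partial_{i_1}\Omega$ and $\partial_{i_2}\Omega$ belong to the same punctured prime factor $Q_{j_0}$. The first step is to apply Lemma \ref{lem:prime} with the separating 2-sphere $S$ to obtain a new reduction system $\tilde{\Sigma}$ that is disjoint from $S$ and satisfies $j_{\tilde{\Sigma}}=j_{\Sigma}$. Because $S$ is a closed surface in the interior of $\Omega$ disjoint from $\tilde{\Sigma}$, it is confined to the interior of a single piece $P$ of the decomposition \eqref{i2nbghh} associated with $\tilde{\Sigma}$.

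I would then split into two cases according to whether $P = Q_{j_0}$. If $P\neq Q_{j_0}$, then $S$ avoids every sphere of $\tilde{\Sigma}$ and hence is disjoint from $Q_{j_0}$ viewed as a subset of $\Omega$. Since $Q_{j_0}$ is connected and contains both $\partial_{i_1}\Omega$ and $\partial_{i_2}\Omega$, it sits inside a single connected component of $\Omega\setminus S$, contradicting the assumption that $S$ separates these boundary components.

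The remaining case is $P = Q_{j_0}$, in which $S$ lies in the interior of the punctured irreducible manifold $\Omega_{j_0}$. Invoking irreducibility of $\Omega_{j_0}$, the sphere $S$ bounds a closed $3$-ball $B\subset\Omega_{j_0}$, and a point-set argument (using that $S$ lies in $\mathrm{int}(\Omega_{j_0})$) shows that $B$ is disjoint from $\partial\Omega_{j_0}$. Cutting $Q_{j_0}$ along $S$ then produces an ``inside'' piece contained in $B$ and an ``outside'' piece $Q_{j_0}^{\mathrm{out}}\subset\Omega_{j_0}\setminus\mathrm{int}(B)$ which is connected and contains all of $\partial\Omega_{j_0}$. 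Since the components of $\partial\Omega$ lying in $Q_{j_0}$ necessarily arise from $\partial\Omega_{j_0}$ rather than from the sphere boundaries introduced by the excised balls, both $\partial_{i_1}\Omega$ and $\partial_{i_2}\Omega$ lie in $Q_{j_0}^{\mathrm{out}}$. Connectedness of $Q_{j_0}^{\mathrm{out}}$, combined with the fact that its interior is disjoint from $S$, yields a path in $\Omega\setminus S$ joining the two components, once again contradicting separation.

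The main obstacle is the verification in the second case that the $3$-ball $B$ furnished by irreducibility is disjoint from $\partial\Omega_{j_0}$, so that all $\partial\Omega$-components inside $Q_{j_0}$ end up on the ``outside'' of $S$. With this point-set fact established, both cases yield a contradiction, forcing $j_{\Sigma}(i_1)\neq j_{\Sigma}(i_2)$ as required.
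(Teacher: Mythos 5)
Your proof is correct and follows essentially the same route as the paper: reduce via Lemma \ref{lem:prime} to the case $S\cap\tilde{\Sigma}=\varnothing$, then derive a contradiction from irreducibility of $\Omega_{j_0}$. You make explicit the case split (whether $S$ lies in $Q_{j_0}$ or not) and the point-set verification that the ball $B$ furnished by irreducibility avoids $\partial\Omega_{j_0}$ — details the paper leaves implicit in the final sentence of its argument — and both are correct as you sketch them.
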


\begin{proof}
Let $S$ be a properly embedded 2-sphere which separates $\partial_{i_1}\Omega$ and $\partial_{i_2}\Omega$. Proceeding by contradiction, suppose that there exists a reduction system $\Sigma$ with $j_{\Sigma}(i_1)= j_{\Sigma}(i_2)=:j$. By Lemma~\ref{lem:prime}, we may assume that $S\cap \Sigma=\varnothing$. Furthermore, since $S$ separates $\partial_{i_1}\Omega$ and $\partial_{i_2}\Omega$ in $\Omega$, we have that $S\cap Q_j$ must separate $\partial_{i_1}\Omega$ and $\partial_{i_2}\Omega$ in $Q_j$. On the other hand, $S\cap Q_j$ is a properly embedded 2-sphere within $Q_j$, as $S\cap\Sigma=\varnothing$. This, however, contradicts the fact that $\Omega_j$ is irreducible.
\end{proof}

\begin{figure}
\centering
\begin{minipage}{0.5\textwidth}
\centering
\includegraphics[width=\textwidth]{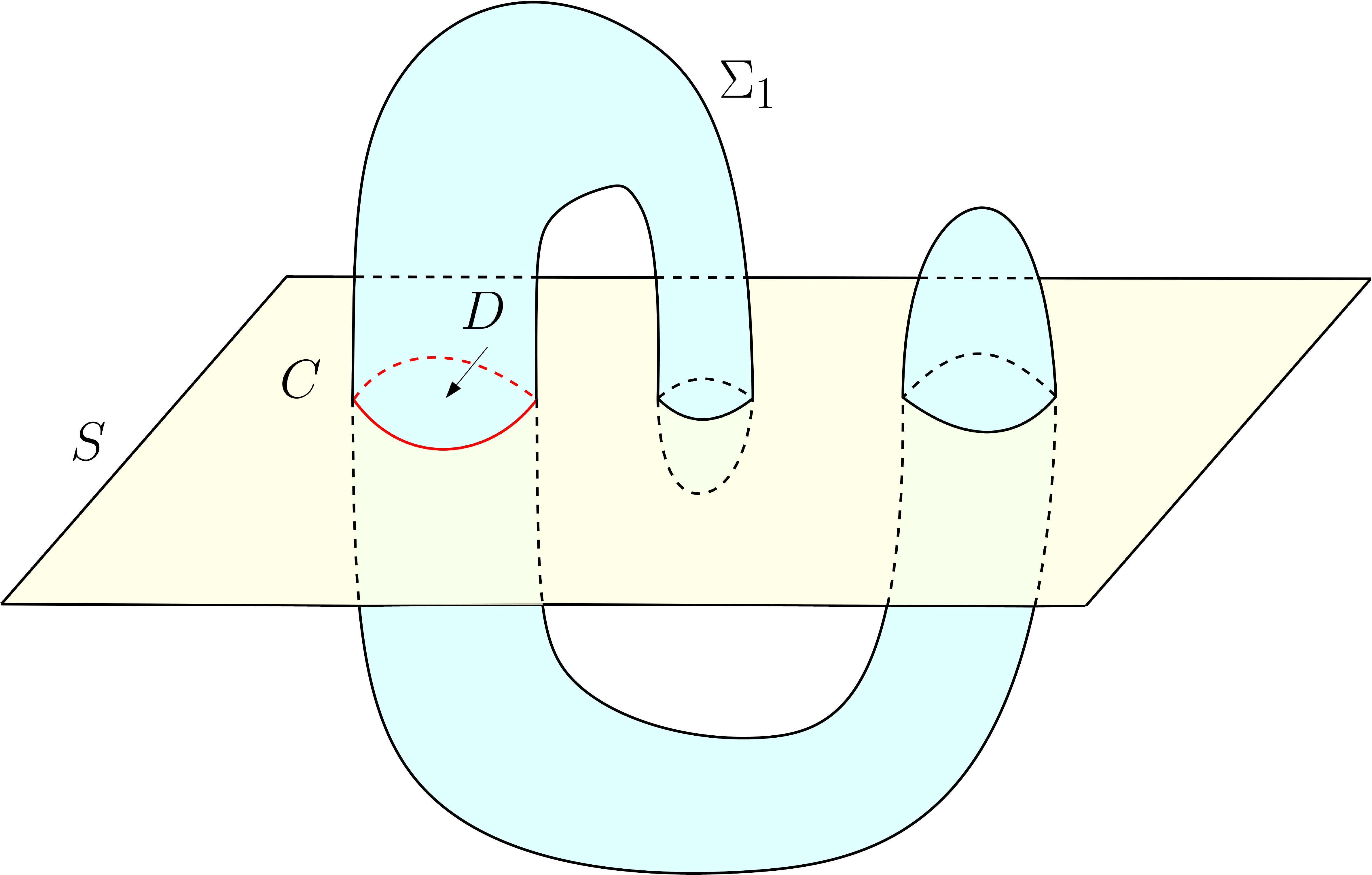} 
\end{minipage}\hfill
\begin{minipage}{0.5\textwidth}
\centering
\includegraphics[width=\textwidth]{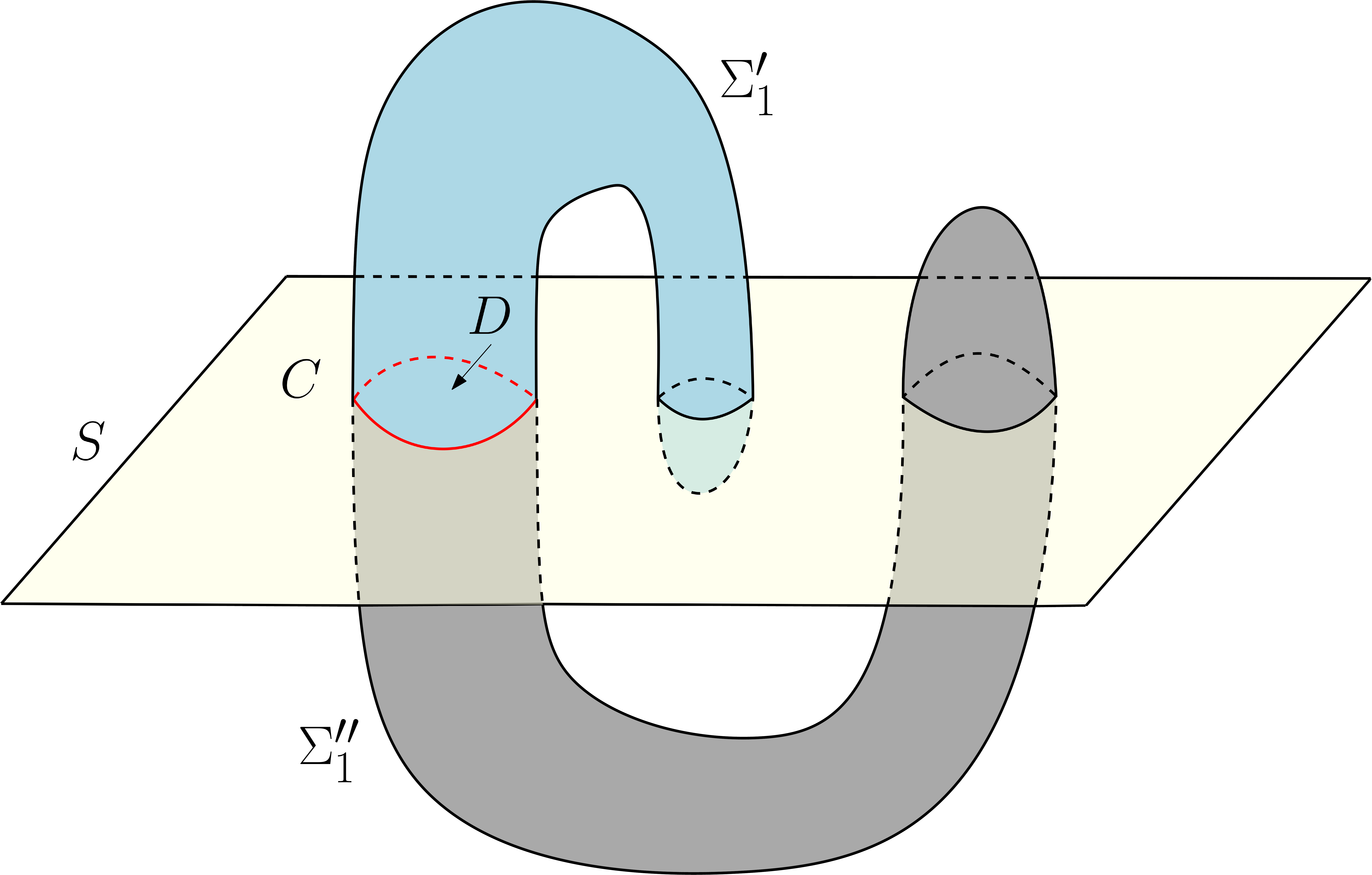} 
\end{minipage}
\caption{An illustration of the procedure to reduce the number of intersection curves between $S$ and $\Sigma$.}\label{fig:3}
\end{figure}

The next result is a less refined version of Proposition \ref{lem:separable}. The refinement is included in the original statement for the purposes of application to the setting in which portions of the boundary are either trapped or untrapped. Here, however, these considerations are not relevant.

\begin{lemma}\label{cor:B8}
Let $\Omega$ be as in Definition \ref{def:Sigma}, and assume that it has at least two boundary components. Suppose that $\partial_1\Omega$ has positive genus, and $\Omega$ satisfies the homotopy condition with respect to $\partial_1\Omega$. Then there exists $i_0\in \llbracket 2,n \rrbracket$ such that $\partial_1\Omega$ and $\partial_{i_0}\Omega$ are not separable by a 2-sphere.
\end{lemma}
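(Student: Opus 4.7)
The plan is to argue by contradiction, combining Lemma \ref{cor:B7} with the prime decomposition and the homotopy condition. Suppose toward contradiction that for every $i\in \llbracket 2,n \rrbracket$ the component $\partial_i\Omega$ is separable from $\partial_1\Omega$ by an embedded $2$-sphere. Choose any reduction system $\Sigma$ for $\Omega$. Then Lemma \ref{cor:B7} forces $j_\Sigma(1)\neq j_\Sigma(i)$ for every $i\geq 2$, so $\partial_1\Omega$ is the only boundary component of $\Omega$ contained in the punctured prime factor $Q_{j_\Sigma(1)}$. Writing $\Omega_\ast := \Omega_{j_\Sigma(1)}$ for the associated irreducible prime summand, and recalling that $Q_{j_\Sigma(1)}$ is obtained from $\Omega_\ast$ by deleting finitely many open $3$-balls, I would conclude that $\partial\Omega_\ast = \partial_1\Omega$.

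Next I would transfer the homotopy condition from $\Omega$ to $\Omega_\ast$. The retraction $\rho:\Omega\to\partial_1\Omega$ supplied by the homotopy condition restricts to a continuous map $\rho_Q:Q_{j_\Sigma(1)}\to\partial_1\Omega$ with $\rho_Q\circ\iota\simeq\mathrm{id}_{\partial_1\Omega}$. The manifold $\Omega_\ast$ is recovered from $Q_{j_\Sigma(1)}$ by gluing a $3$-ball to each spherical component of $\partial Q_{j_\Sigma(1)}\setminus\partial_1\Omega$. Since $\partial_1\Omega$ is a closed surface of positive genus its universal cover is contractible, giving $\pi_2(\partial_1\Omega)=0$. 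Hence the restriction of $\rho_Q$ to each such $2$-sphere is null-homotopic and extends continuously across the corresponding $3$-ball, producing $\rho_\ast:\Omega_\ast\to\partial_1\Omega$ which still satisfies $\rho_\ast\circ\iota\simeq\mathrm{id}_{\partial_1\Omega}$. Thus $\Omega_\ast$ inherits the homotopy condition with respect to $\partial_1\Omega$.

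To finish I would invoke standard $3$-manifold topology. The existence of $\rho_\ast$ forces the map $\iota_\ast:H_2(\partial_1\Omega;\Z)\to H_2(\Omega_\ast;\Z)$ to be injective, so the fundamental class $[\partial_1\Omega]$ pushes forward to a nonzero element of $H_2(\Omega_\ast;\Z)$. On the other hand, since $\Omega_\ast$ is a compact connected orientable $3$-manifold whose entire boundary equals $\partial_1\Omega$, the connecting homomorphism in the long exact sequence of the pair $(\Omega_\ast,\partial\Omega_\ast)$ sends $[\Omega_\ast,\partial\Omega_\ast]$ to $[\partial_1\Omega]$, which therefore vanishes in $H_2(\Omega_\ast;\Z)$. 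This contradiction closes the argument.

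I expect the main subtlety to lie in the bookkeeping in the first two steps: confirming that $\partial\Omega_\ast$ is exactly $\partial_1\Omega$ once the reduction system has isolated this component, and checking that the extension of $\rho_Q$ across the filling $3$-balls does not disturb the homotopy $\rho_Q\circ\iota\simeq\mathrm{id}$. Both points rest on the definition of a reduction system and on $\pi_2(\partial_1\Omega)=0$, which is where the positive genus hypothesis is crucial. Once these are in place, the homological contradiction is immediate.
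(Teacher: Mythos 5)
Your proof is correct and follows essentially the same strategy as the paper's: apply Lemma~\ref{cor:B7} to a reduction system to isolate $\partial_1\Omega$ in a single punctured prime factor, restrict the homotopy condition there, and derive a contradiction from the fact that the boundary fundamental class of a compact oriented $3$-manifold dies in $H_2$. The only variation is in bookkeeping: you cap off the puncture spheres and extend the map across the filled balls using $\pi_2(\partial_1\Omega)=0$ before invoking the long exact sequence of the pair $(\Omega_*,\partial\Omega_*)$, whereas the paper remains in the punctured piece $Q_{j_\Sigma(1)}$, writes $0=[\partial_1\Omega]+[\text{spheres}]$ in $H_2(Q_{j_\Sigma(1)})$, and kills the sphere terms by pushing forward via $\rho_*$ (noting every map $S^2\to\partial_1\Omega$ has degree zero) -- the two devices rest on the same underlying fact about maps from $S^2$ to a positive-genus surface, so the proofs are interchangeable.
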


\begin{proof}
Assume the conclusion is false, that is, $\partial_1\Omega$ and $\partial_i\Omega$ are separable by a 2-sphere for all $i\in \llbracket 2,n \rrbracket$. Let $\Sigma$ be a reduction system and let $j_1=j_\Sigma(1)$. By Lemma~\ref{cor:B7}, we have that $j_\Sigma(i)\neq j_1$ for all $i\in\llbracket 2,n \rrbracket$. Therefore $\partial Q_{j_1}$ consists of $\partial_1\Omega$ and 2-spheres, so that
\begin{equation}
0=[\partial Q_{j_1}]=[\partial_1\Omega ]+[\textup{spheres}]
\end{equation}
in $H_2(Q_{j_1})$. Moreover, $Q_{j_1}$ satisfies the homotopy condition with respect to $\partial_1\Omega$. Thus, there exists a continuous map $\rho:Q_j \rightarrow\partial_1\Omega$ such that its restriction to $\partial_1 \Omega$ is homotopic to the identity. It follows that
\begin{equation}
0=\rho_*\big( [\partial_1\Omega]+[\textup{spheres}] \big)=[\partial_1\Omega],
\end{equation}
which is impossible. In the second equality, we used the fact that every map from $S^2$ to a higher genus surface has degree zero. We conclude that there must exist $i_0\in \llbracket 2,n \rrbracket$ such that $\partial_1\Omega$ and $\partial_{i_0}\Omega$ are not separable by a 2-sphere.
\end{proof}

\begin{proof}[Proof of Proposition~\ref{lem:separable}]
This is a direct consequence of Lemma \ref{cor:B8}. To see this, simply note that the component $\partial_{i_0} \Omega$ which is not separable from $\partial_1^+ \Omega$ by a 2-sphere, must be among the $\partial_i^-\Omega$ since $\partial^+_i\Omega$ are 2-spheres for $i\neq 1$.
\end{proof}

\section{An Integral Identity}
\label{sec3}
\setcounter{equation}{0}
\setcounter{section}{3}

Spacetime harmonic functions satisfy a Bochner-type identity, which when integrated produces a natural relation between the dominant energy condition and the boundary geometry of initial data sets. This observation leads to a proof of the spacetime version of the positive mass theorem in the asymptotically flat and hyperboloidal settings \cite{BHKKZ,HKK}. Here we will present a version of the resulting integral identity suitable for the purposes of this paper. In particular, the boundary terms are analyzed in greater detail so that they may be related to the null expansions of the boundary.  The following result is a generalization of \cite[Proposition 3.2]{HKK} and \cite[Proposition 1.1]{HMT}. Although it is stated in full generality with arbitrary boundary conditions, the inequality will only be applied for constant Dirichlet boundary data, in which case several boundary integrals simplify.

\begin{proposition}\label{pro:div}
Let $(\Omega,g,k)$ be an orientable 3-dimensional compact initial data set with smooth boundary $\partial\Omega$, having outward unit normal $\mathbf{n}$. Let $u:\Omega\to\mathbb{R}$ be a spacetime harmonic function which lies in $C^{2,\alpha}(\Omega)$, $0<\alpha<1$, and denote the open subset of the boundary on which $|\nabla_{\partial} u|\neq 0$ by $\bar{\partial}\Omega$, where $\nabla_{\partial}u$ is the projection of the full gradient onto the boundary tangent space. The set of boundary points on which $|\nabla u|\neq 0$ will be labeled by $\hat{\partial}\Omega$. If $\overline{u}$ and $\underline{u}$ are the maximum and minimum values of $u$ and $\Sigma_t$ are $t$-level sets, then
\begin{align}\label{equ:prediv}
\begin{split}
&\int_{\hat{\partial}\Omega}\left(k(\nabla_{\partial}u,\mathbf{n})-|\nabla u|H-\mathbf{n}(u)\mathrm{Tr}_{\partial\Omega}k\right)dA\\
&+\int_{\bar{\partial} \Omega}\frac{|\nabla_{\partial}u|}{|\nabla u|}\nabla_{\partial}u\left(\frac{\mathbf{n}(u)}{|\nabla_{\partial}u|}\right) dA+2\pi\int_{\underline{u}}^{\bar{u}}\chi(\Sigma_t)dt \\
&\geq\int_{\Omega}\left(\frac{1}{2}\frac{|\bar{\nabla}^2 u|^2}{|\nabla u|}+\mu|\nabla u|+J(\nabla u)\right)dV,
\end{split}
\end{align}
where $\chi(\Sigma_t)$ is the Euler characteristic, $\bar{\nabla}^2 u$ is the spacetime Hessian, and $H$ is the mean curvature of the boundary with respect to $\mathbf{n}$.
\end{proposition}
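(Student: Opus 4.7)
The approach is to adapt the spacetime Bochner method of \cite[Proposition 3.2]{HKK} and \cite[Proposition 1.1]{HMT} to the present boundary setting, the only genuine new ingredient being a careful accounting of boundary contributions on the two distinguished subsets $\hat{\partial}\Omega \supset \bar{\partial}\Omega$. First, on the regular set $U = \{|\nabla u|>0\}\subset \Omega$, I would combine the classical Bochner formula for $|\nabla u|^2$ with the spacetime harmonic equation \eqref{shf} and the identity $\bar{\nabla}^2 u = \nabla^2 u + |\nabla u|k$, complete the square against the spacetime Hessian, and substitute the Hamiltonian constraint together with the Gauss equation for the regular level surface $\Sigma_t$ through the point. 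This produces (cf.\ \cite{HKK}) the pointwise estimate
\begin{equation*}
\Delta|\nabla u| \;\geq\; \frac{|\bar{\nabla}^2 u|^2}{2|\nabla u|} + (\mu + J(\nu))|\nabla u| - K_{\Sigma_t}|\nabla u| + \mathrm{div}_{\Sigma_t}X,
\end{equation*}
where $\nu = \nabla u/|\nabla u|$, $K_{\Sigma_t}$ is the Gauss curvature of $\Sigma_t$, and $X$ is a tangential vector field on $\Sigma_t$ whose divergence contributes only to $\partial\Sigma_t$-line integrals after integration.

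Next I would integrate this inequality over $\Omega$. The divergence theorem on the left produces a flux $\int_{\partial\Omega}\mathbf{n}(|\nabla u|)\,dA$, and the co-area formula combined with Gauss--Bonnet on each regular level set, whose boundary is $\partial\Sigma_t = \Sigma_t\cap\partial\Omega$, converts the intrinsic curvature term via
\begin{equation*}
\int_{\underline{u}}^{\bar{u}}\int_{\Sigma_t} K_{\Sigma_t}\,dA\,dt \;=\; 2\pi\int_{\underline{u}}^{\bar{u}}\chi(\Sigma_t)\,dt \;-\; \int_{\underline{u}}^{\bar{u}}\int_{\partial\Sigma_t}\kappa_g\,ds\,dt,
\end{equation*}
where $\kappa_g$ denotes the geodesic curvature of $\partial\Sigma_t$ within $\Sigma_t$ and Sard's theorem ensures that almost every $t$ is a regular value. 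The $\mathrm{div}_{\Sigma_t}X$ contribution on each $\Sigma_t$ is likewise pushed to a line integral along $\partial\Sigma_t$.

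The principal technical obstacle, and where the statement genuinely extends \cite{HKK,HMT}, is matching these line-integral contributions with the integrands in \eqref{equ:prediv}. I would decompose $\partial\Omega$ into $\hat{\partial}\Omega$ and its subset $\bar{\partial}\Omega$, noting that $\partial\Sigma_t$ is smooth near a boundary point precisely when $\nabla_\partial u \neq 0$ there. On $\bar{\partial}\Omega$, the boundary co-area formula turns $\int_{\underline{u}}^{\bar{u}}\int_{\partial\Sigma_t}\kappa_g\,ds\,dt$ into a surface integral weighted by $|\nabla_\partial u|$. Using the orthogonal decomposition $\nabla u = \mathbf{n}(u)\mathbf{n} + \nabla_\partial u$ together with the standard expression for the geodesic curvature of $\partial\Sigma_t \subset \Sigma_t$ in terms of the second fundamental form of $\partial\Omega \subset \Omega$ and the angle $\cos\alpha = \mathbf{n}(u)/|\nabla u|$, one can combine this with the flux $\mathbf{n}(|\nabla u|)$ to produce exactly the combination $k(\nabla_\partial u,\mathbf{n}) - |\nabla u|H - \mathbf{n}(u)\mathrm{Tr}_{\partial\Omega}k$ on $\hat{\partial}\Omega$, together with the extra contribution $\tfrac{|\nabla_\partial u|}{|\nabla u|}\nabla_\partial u\bigl(\tfrac{\mathbf{n}(u)}{|\nabla_\partial u|}\bigr)$ on $\bar{\partial}\Omega$. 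The $k$-dependent pieces appear because we replace the ordinary Hessian boundary terms $\nabla^2 u(\mathbf{n},\cdot)$ by their spacetime counterparts via $\bar{\nabla}^2 u = \nabla^2 u + |\nabla u|k$. A separate pointwise check on $\hat{\partial}\Omega\setminus\bar{\partial}\Omega$, where $\nabla u$ is purely normal and the level curves degenerate, confirms that no extra contribution arises there.

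Finally, the critical set $\{|\nabla u|=0\}$, which can have positive measure but whose image under $u$ has measure zero by Sard, is handled by the standard regularization $|\nabla u| \mapsto \sqrt{|\nabla u|^2 + \epsilon^2} - \epsilon$ used in \cite{BHKKZ,HKK,HMT}: I would perform the above computation at $\epsilon > 0$, where all manipulations are smooth, and then pass to $\epsilon \to 0$ via monotone/dominated convergence. The $C^{2,\alpha}$ regularity of $u$ is precisely what guarantees that each boundary integrand in \eqref{equ:prediv} is well defined pointwise and that the limiting inequality survives. The hard part is the algebraic matching carried out in the third paragraph; once that is verified, everything else is a routine adaptation of the closed-boundary argument already in the literature.
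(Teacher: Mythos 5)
Your proposal follows essentially the same route as the paper: start from the spacetime Bochner integral identity of \cite[Proposition 3.2]{HKK} (which the paper simply cites rather than rederives), apply Sard's theorem, Gauss--Bonnet, and the coarea formula on regular level sets, and then convert the boundary flux $\mathbf{n}(|\nabla u|)+k(\nabla u,\mathbf{n})$ together with the geodesic curvature line integrals into the stated boundary integrands via the orthogonal decomposition of $\nabla u$ at $\partial\Omega$ and the frame $\{\tau,\nu,\tilde{\mathbf{n}}\}$, treating $\hat{\partial}\Omega\setminus\bar{\partial}\Omega$ separately as you indicate. The one place where your sketch stops short of a proof is exactly where the new content of the proposition sits: you assert that the flux and the geodesic-curvature line integrals ``produce exactly'' the combination $k(\nabla_\partial u,\mathbf{n})-|\nabla u|H-\mathbf{n}(u)\mathrm{Tr}_{\partial\Omega}k$ plus the $\bar{\partial}\Omega$ correction, but the identity that makes this work, namely the expansion of $-|\nabla u|^{-1}II(\nabla_\partial u,\nabla_\partial u)$ in terms of $\kappa$, $H$, $\mathrm{Tr}_g k$, the normal second derivative, and $\nabla_\partial u(|\nabla_\partial u|)$ (the paper's equation \eqref{jshgkah}), is not derived; this frame computation is the actual work and should be spelled out.
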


\begin{proof}
The integral identity of \cite[Proposition 3.2]{HKK} states that
\begin{equation}\label{jghqk}
\int_{\hat{\partial}\Omega}\left(\mathbf{n}(|\nabla u|)+k(\nabla u,\mathbf{n})\right)dA\geq\int_{\underline{u}}^{\overline{u}}\int_{\Sigma_t}
\left(\frac{1}{2}\frac{|\bar{\nabla}^2 u|^2}{|\nabla u|^2}+\mu+J\left(\frac{\nabla u}{|\nabla u|}\right)-K\right)dA dt,
\end{equation}
where $K$ is the Gauss curvature of regular level sets $\Sigma_t$.
Observe that by Sard's theorem, the set of values in $[\underline{u},\overline{u}]$ which are critical for $u$ on $\Omega$ or $\partial\Omega$ is of measure zero; see
\cite[Remark 3.3]{HKK} for the applicability of Sard's theorem under the current regularity hypotheses. Thus, on the right-hand side of \eqref{jghqk} we may restrict attention to regular level sets $\Sigma_t$ for which $t$ is also a regular value of $u|_{\partial\Omega}$. These level sets intersect the boundary of $\Omega$ transversely, and hence
\begin{equation}
\partial\Sigma_t =\Sigma_t \cap\partial\Omega=\Sigma_t \cap \bar{\partial}\Omega
\end{equation}
consists of possibly multiple smooth closed curves in $\partial\Omega$.
We may then apply the Gauss-Bonnet theorem and coarea formula to find
\begin{align}\label{ajghiaifh}
\begin{split}
&\int_{\underline{u}}^{\overline{u}}\left(2\pi\chi(\Sigma_t)
-\int_{\Sigma_t \cap\bar{\partial}\Omega}\kappa\right)dt+
\int_{\hat{\partial}\Omega}\left(\mathbf{n}(|\nabla u|)+k(\nabla u,\mathbf{n})\right)dA\\
\geq & \int_{\Omega}\left(\frac{1}{2}\frac{|\bar{\nabla}^2 u|^2}{|\nabla u|}+\mu|\nabla u|+J(\nabla u)\right)dV,
\end{split}
\end{align}
where $\kappa$ denotes the geodesic curvature of $\Sigma_t \cap\bar{\partial}\Omega$ viewed as the boundary of the regular level set $\Sigma_t$.

The boundary terms of \eqref{ajghiaifh} will now be analyzed. Working on $\hat{\partial}\Omega$, a straightforward computation shows that
\begin{equation}\label{fjkaklgnaoo}
\mathbf{n}(|\nabla u|)=\frac{1}{|\nabla u|}\left(\mathbf{n}(u)\nabla_{\mathbf{n}}^2 u-II(\nabla_{\partial}u,\nabla_{\partial}u)
+\nabla_{\partial}u\left(\mathbf{n}(u)\right)\right),
\end{equation}
where $II(X,Y)=\langle\nabla_{X}\mathbf{n},Y\rangle$ with $X,Y\in T\partial\Omega$ denotes the second fundamental form of the boundary. Furthermore, as is shown below, on $\bar{\partial}\Omega$ the second fundamental form term may be expanded as
\begin{equation}\label{jshgkah}
-|\nabla u|^{-1} II(\nabla_{\partial}u,\nabla_{\partial}u)=|\nabla_{\partial}u|\kappa
-|\nabla u| H-\mathbf{n}(u)\mathrm{Tr}_{g}k-\frac{\mathbf{n}(u)}{|\nabla u|}\nabla_{\mathbf{n}}^2 u-\frac{\mathbf{n}(u)}{|\nabla_{\partial}u||\nabla u|}\nabla_{\partial}u(|\nabla_{\partial}u|).
\end{equation}
It follows that on $\bar{\partial}\Omega$ we have
\begin{equation}\label{aohaqiofgh}
\mathbf{n}(|\nabla u|)= |\nabla_{\partial}u|\kappa-|\nabla u|H- \mathbf{n}(u)\mathrm{Tr}_g k
+\frac{|\nabla_{\partial}u|}{|\nabla u|}\nabla_{\partial}u\left(\frac{\mathbf{n}(u)}{|\nabla_{\partial}u|}\right).
\end{equation}
Consider now the set $\hat{\partial}\Omega\setminus\bar{\partial}\Omega$, that is, boundary points where $|\nabla u|\neq 0$ but $|\nabla_{\partial}u|=0$. In this case, \eqref{fjkaklgnaoo} and the spacetime harmonic equation \eqref{shf} imply that
\begin{equation}\label{q9hgns}
\mathbf{n}(|\nabla u|)=\frac{\mathbf{n}(u)}{|\nabla u|}\nabla_{\mathbf{n}}^2 u =-\frac{\mathbf{n}(u)}{|\nabla u|}\left(H\mathbf{n}(u)+\left(\mathrm{Tr}_g k\right)|\nabla u|\right)=-|\nabla u|H- \mathbf{n}(u)\mathrm{Tr}_g k
\end{equation}
at points with $\Delta_{\partial}u=0$, where $\Delta_{\partial}$ is the Laplace-Beltrami operator with respect to the boundary metric. Moreover, the set of points with $|\nabla_{\partial}u|=0$ and $\Delta_{\partial}u\neq 0$ is of measure zero in $\partial\Omega$, as may be seen by applying the regular value theorem to the appropriate projection of $\nabla_{\partial}u$. Therefore combining \eqref{ajghiaifh}, \eqref{aohaqiofgh}, and \eqref{q9hgns} along with the coarea formula on the boundary, yields the desired result.

It remains to verify \eqref{jshgkah}. Each point of $\bar{\partial}\Omega$ lies on a smooth curve $\partial\Sigma_t $, for some level set $\Sigma_t$. We may then construct an orthogonal frame $\{\tau,\nu, \tilde{\mathbf{n}}\}$ at each such boundary point where: $\tau$ is the unit tangent vector to the curve, $\nu=\tfrac{\nabla u}{|\nabla u|}$, and $\tilde{\mathbf{n}}=\mathbf{n}-\tfrac{\mathbf{n}(u)}{|\nabla u|}\nu$ is the projection of the unit outer normal for $\partial\Omega$ onto the tangent space of $\Sigma_t$. The mean curvature of $\partial\Omega$ and the geodesic curvature of $\partial\Sigma_t$ may then be expressed as
\begin{equation}
H=\langle\nabla_{\tau}\mathbf{n},\tau\rangle+|\nabla_{\partial}u|^{-2}
\langle\nabla_{\nabla_{\partial}u}\mathbf{n},\nabla_{\partial}u\rangle,
\end{equation}
\begin{equation}
\kappa=\left\langle\nabla_{\tau}
\frac{\tilde{\mathbf{n}}}{|\tilde{\mathbf{n}}|},\tau\right\rangle
=|\tilde{\mathbf{n}}|^{-1}\left(\langle\nabla_{\tau}\mathbf{n},\tau\rangle
-\frac{\mathbf{n}(u)}{|\nabla u|^2}\langle\nabla_{\tau}\nabla u,\tau\rangle\right).
\end{equation}
Therefore
\begin{equation}
-II(\nabla_{\partial}u,\nabla_{\partial}u)
=|\nabla_{\partial}u|^2\left(\langle\nabla_{\tau}\mathbf{n},\tau\rangle -H\right)
=|\nabla_{\partial}u|^2\left(|\tilde{\mathbf{n}}|\kappa
+\frac{\mathbf{n}(u)}{|\nabla u|^2}\langle\nabla_{\tau}\nabla u,\tau\rangle -H\right).
\end{equation}
Using the computation
\begin{equation}
|\tilde{\mathbf{n}}|^2 =1-\frac{\mathbf{n}(u)^2}{|\nabla u|^2}=\frac{|\nabla_{\partial}u|^2}{|\nabla u|^2}
\end{equation}
and the fact that $u$ is a spacetime harmonic function, we find that
\begin{align}\label{alfksdjfia}
\begin{split}
&-II(\nabla_{\partial}u,\nabla_{\partial}u)\\
=&\frac{|\nabla_{\partial}u|^3}{|\nabla u|}\kappa
-|\nabla_{\partial}u|^2 H
+\frac{\mathbf{n}(u)|\nabla_{\partial}u|^2}{|\nabla u|^2}\langle\nabla_{\tau}\nabla u,\tau\rangle\\
=&\frac{|\nabla_{\partial}u|^3}{|\nabla u|}\kappa
-|\nabla_{\partial}u|^2 H -\frac{\mathbf{n}(u)|\nabla_{\partial}u|^2}{|\nabla u|^2}\left(\nabla_{\mathbf{n}}^2 u+\frac{\nabla^2 u(\nabla_{\partial}u,\nabla_{\partial}u)}{|\nabla_{\partial}u|^2}
+\left(\mathrm{Tr}_g k\right)|\nabla u|\right).
\end{split}
\end{align}
Lastly, inserting
\begin{align}
\begin{split}
\nabla^2 u(\nabla_{\partial}u,\nabla_{\partial}u)
=&\nabla^2_{\partial}u(\nabla_{\partial}u,\nabla_{\partial}u)+
II(\nabla_{\partial}u,\nabla_{\partial}u)\mathbf{n}(u)\\
=&|\nabla_{\partial}u|\nabla_{\partial}u\left(|\nabla_{\partial}u|\right)
+II(\nabla_{\partial}u,\nabla_{\partial}u)\mathbf{n}(u)
\end{split}
\end{align}
into \eqref{alfksdjfia} and solving for $II(\nabla_{\partial}u,\nabla_{\partial}u)$ produces formula \eqref{jshgkah}.
\end{proof}

The boundary terms in the integral identity \eqref{equ:prediv} motivate a boundary value problem for the spacetime harmonic function. In particular, consider the case in which the function $u$ takes constant values on each connected component of $\partial\Omega$. This implies that $\bar{\partial}\Omega=\varnothing$, and $k(\nabla_{\partial}u,\mathbf{n})=0$, as well as $\mathbf{n}(u)=\pm |\nabla u|$. Thus, if additionally the sign of $\mathbf{n}(u)$ were prescribed appropriately, then the null expansions would appear in the boundary integrals. It turns out that this can be achieved by choosing the constants on the various components of the boundary correctly.

Consider the setting of Theorem \ref{thm:EGM}, in which the boundary is decomposed into a disjoint union
\begin{equation}\label{bdryd}
\partial\Omega=\big(\sqcup_{i=1}^m\partial^{+}_{i}\Omega\big)\sqcup \big(\sqcup_{i=1}^\ell\partial^-_{i}\Omega\big),
\end{equation}
where the connected components are organized so that $\theta_{+}\left(\partial_i^+ \Omega\right)\geq 0$ with respect to the outer normal, and $\theta_{+}\left(\partial_i^- \Omega\right)\leq 0$ with respect to the inner normal. The unit normal which takes this set of orientations at the various components will be denoted by $\upsilon$. We then propose the following boundary value problem which is closely related to that used in \cite{HKK}:
\begin{equation}\label{shf1}
\Delta u+\left(\mathrm{Tr}_g k\right)|\nabla u|=0\quad\quad\text{ on }\quad\quad\Omega,
\end{equation}
\begin{equation}\label{equ:boundarycondition1}
\left\{\begin{array}{cc}
u=1 &\ \textup{on}\ \partial_{1}^+\Omega,\\
u=0 &\ \textup{on}\ \partial_{1}^-\Omega,\\
u=a^+_i\in (0,1) &\ \textup{on}\ \partial^+_{i}\Omega,\ i\in\llbracket 2,m \rrbracket,\\
u=a^-_i\in (0,1) &\ \textup{on}\ \partial^-_{i}\Omega,\ i\in\llbracket 2,\ell \rrbracket,
\end{array} \right.
\end{equation}
and
\begin{equation}\label{equ:boundarycondition2}
\displaystyle\min_{\partial^+_{i}\Omega}\partial_{\upsilon}u=0 \text{ }\text{ for }\text{ } i\in\llbracket 2,m \rrbracket,\quad\quad\quad
\displaystyle\min_{\partial^-_{i}\Omega}\partial_{\upsilon}u=0  \text{ }\text{ for }\text{ } i\in\llbracket 2,\ell \rrbracket,
\end{equation}
for some constants $a_{i}^{\pm}$. For a solution as above, it holds that
\begin{equation}
\begin{split}
&\mathbf{n}(u)=\partial_{\upsilon}u=|\nabla u|\ \textup{ at }\ \partial^+_i\Omega\ \ \textup{ for }\  i\in\llbracket 1,m \rrbracket,\\
&\mathbf{n}(u)=-\partial_{\upsilon}u=-|\nabla u|\ \textup{ at }\ \partial^-_i\Omega\ \ \textup{ for }\  i\in\llbracket 1,\ell \rrbracket,
\end{split}
\end{equation}
where we have used the maximum principle for $i=1$. This implies that
\begin{equation}
\begin{split}
&-|\nabla u|H-\mathbf{n}(u)\textup{Tr}_{\partial \Omega}k=-\theta_+|\nabla u|\ \textup{ at }\ \partial^+_i\Omega\ \ \textup{ for }\  i\in\llbracket 1,m \rrbracket,\\
&-|\nabla u|H-\mathbf{n}(u)\textup{Tr}_{\partial \Omega}k=\theta_+|\nabla u|\ \textup{ at }\ \partial^-_i\Omega\ \ \textup{ for }\  i\in\llbracket 1,\ell \rrbracket,
\end{split}
\end{equation}
where the mean curvature in $\theta_+$ is computed with respect to $\upsilon$. Applying Proposition \ref{pro:div} with this spacetime harmonic function then yields
\begin{align}\label{aifjq0gn}
\begin{split}
&\int_{\Omega}\left(\frac{1}{2}\frac{|\bar{\nabla}^2 u|^2}{|\nabla u|}+\mu|\nabla u|+J(\nabla u)\right)dV
+\sum_{i=2}^{m}\int_{\hat{\partial}_i^+ \Omega}\theta_{+} |\nabla u|dA
-\sum_{i=1}^{\ell}\int_{\hat{\partial}_i^- \Omega}\theta_{+} |\nabla u|dA\\
\leq &2\pi\int_{\underline{u}}^{\bar{u}}\chi(\Sigma_t)dt-\int_{\hat{\partial}_1^+ \Omega}\theta_+ |\nabla u|dA.
\end{split}
\end{align}
Although the integral over $\hat{\partial}_1^+ \Omega$ is similarly nonnegative in this setting, we keep it in the expression \eqref{aifjq0gn} for later use with other hypotheses.

\section{The Spacetime Harmonic Function Boundary Value Problem}
\label{sec4}
\setcounter{equation}{0}
\setcounter{section}{4}

In this section we will solve the boundary value problem \eqref{shf1}, \eqref{equ:boundarycondition1} with auxiliary condition \eqref{equ:boundarycondition2}. Note that the auxiliary condition is needed only when $\Omega$ has more than two boundary components. The spacetime harmonic function equation admits a mild, effectively linear, nonlinearity.  This allows from a relatively straightforward application of the Leray-Schauder fixed point theorem, to establish existence for the Dirichlet problem, see \cite[Section 4.1]{HKK}. There it was shown that given a function $h\in C^{2,\alpha}(\partial\Omega)$, $\alpha\in (0,1)$ there is a unique solution $u\in C^{2,\alpha}(\Omega)$ of
\begin{equation}\label{Dirichlet-local}
\left\{ \begin{array}{cc}
\Delta u+ \left(\mathrm{Tr}_g k\right)|\nabla u|=0,  & \textup{ on }\ \Omega, \\
u=h, & \textup{ on }\ \partial\Omega,
\end{array} \right.
\end{equation}
satisfying the estimate
\begin{equation}\label{estimae}
\| u\|_{C^{2,\alpha}(\Omega)}\leq C\left(\alpha,\| h\|_{C^{2,\alpha}(\partial\Omega)}\right)
\end{equation}
where the constant $C$ also depends on $g$ and $k$ although this is not emphasized. Furthermore, the existence of constant boundary values $a_{i}^{\pm}$ for which the auxiliary condition \eqref{equ:boundarycondition2} is satisfied may be motivated as follows. Suppose that on one boundary component $\partial_i^+ \Omega$ the Dirichlet value is set to $a_i^+ =1$. Then by the maximum principle and the Hopf lemma, which applies to the spacetime harmonic function equation since the nonlinear first order part may be expressed as a linear term with bounded coefficients, we must have that the normal derivative satisfies $\mathbf{n}(u)>0$ on $\partial_i^+ \Omega$. Similarly, if we set $a_i^+ =0$ then $\mathbf{n}(u)<0$. Thus, if we vary the choice of $a_i^+$ from 1 to 0, while all other boundary values are held fixed, then there should be a value $a_i^+ \in (0,1)$ such that $\min_{\partial_i^+ \Omega}\mathbf{n}(u)=0$. It turns out that we are able to prove a slightly stronger result. In what follows, vectors in $\mathbb{R}^{m+\ell-2}$ will be denoted by $\vec{a}=(a^+_{2},\dots,a^+_{m},a^-_{2},\dots, a^-_{\ell})$, and we shall write $\vec{a}\leq\vec{b}$ if $a^+_i\leq b^+_i$ for all $ i\in\llbracket 2,m \rrbracket$ and $a^-_i\leq b^-_i$ for all $ i\in\llbracket 2,\ell \rrbracket$.

\begin{proposition}\label{lem:specialbd-app}
Let $(\Omega,g,k)$ be a smooth compact initial data set, with boundary satisfying the decomposition \eqref{bdryd}. Then there exists a unique vector $\vec{a}\in(0,1)^{m+\ell-2}$ and a unique function $u_{\vec{a}}\in C^{2,\alpha}(\Omega)$ satisfying \eqref{shf1}, \eqref{equ:boundarycondition1}, and \eqref{equ:boundarycondition2}. Furthermore, let $\vec{b}\in \mathbb{R}^{m+\ell-2}$ be any vector, and let $u_{\vec{b}}\in C^{2,\alpha}(\Omega)$ be the unique solution to \eqref{shf1} and \eqref{equ:boundarycondition1} with $\vec{a}$ replaced by $\vec{b}$. If $\mathbf{n}(u_{\vec{b}})\geq 0$ on $\partial\Omega\setminus\left(\partial_1^+ \Omega \cup \partial_1^- \Omega\right)$, where $\mathbf{n}$ is the unit outer normal, then $\vec{a}\leq\vec{b}$. In particular, $u_{\vec{a}}\leq u_{\vec{b}}$ on $\Omega$.
\end{proposition}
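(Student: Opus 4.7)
The idea is to produce the admissible pair $(\vec{a}, u_{\vec{a}})$ via a Brouwer degree argument on the cube of boundary parameters, and then to deduce both uniqueness and the comparison statement from the maximum principle and Hopf lemma. For every $\vec{b}=(b_2^+,\dots,b_m^+,b_2^-,\dots,b_\ell^-)\in[0,1]^{m+\ell-2}$ the Dirichlet result \eqref{Dirichlet-local}--\eqref{estimae} supplies a unique $u_{\vec{b}}\in C^{2,\alpha}(\Omega)$ solving \eqref{shf1} with boundary data \eqref{equ:boundarycondition1}; the estimate \eqref{estimae} combined with uniqueness makes $\vec{b}\mapsto u_{\vec{b}}$ continuous in the $C^{2,\alpha}$ topology. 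Define the continuous map
\begin{equation*}
\Phi:[0,1]^{m+\ell-2}\to\mathbb{R}^{m+\ell-2},\qquad \Phi(\vec{b})_i^{\pm}:=\min_{\partial_i^{\pm}\Omega}\partial_\upsilon u_{\vec{b}}.
\end{equation*}
Finding the admissible vector then amounts to locating $\vec{a}$ in the interior of the cube with $\Phi(\vec{a})=\vec{0}$.

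The next step is to identify the signs of $\Phi$ on the faces of the cube. Rewriting \eqref{shf1} as the linear equation $\Delta u+\mathbf{V}(x)\cdot\nabla u=0$ with bounded coefficient $\mathbf{V}=(\mathrm{Tr}_g k)\,\nabla u/|\nabla u|$ (extended by zero where $\nabla u=0$), both the strong maximum principle and the Hopf lemma apply to $u_{\vec{b}}$. Since $u_{\vec{b}}$ takes the values $1$ on $\partial_1^+\Omega$ and $0$ on $\partial_1^-\Omega$, it lies strictly in $(0,1)$ on the interior. On the face $b_i^+=0$ the function $u_{\vec{b}}$ attains its minimum along $\partial_i^+\Omega$, forcing $\mathbf{n}(u_{\vec{b}})<0$ there and hence $\Phi_i^+(\vec{b})<0$; on $b_i^+=1$ the roles reverse to give $\Phi_i^+(\vec{b})>0$. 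Because $\upsilon=-\mathbf{n}$ on each $\partial_i^-\Omega$, the analogous analysis produces $\Phi_i^-(\vec{b})>0$ on $b_i^-=0$ and $\Phi_i^-(\vec{b})<0$ on $b_i^-=1$.

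For existence I would invoke Brouwer degree. Set $\tilde\Phi_i^+:=\Phi_i^+$ and $\tilde\Phi_i^-:=-\Phi_i^-$, so the face analysis says $\tilde\Phi_j(\vec{b})$ shares the sign of $b_j-\tfrac12$ whenever $b_j\in\{0,1\}$. The straight-line homotopy
\begin{equation*}
H_t(\vec{b}):=t\,\tilde\Phi(\vec{b})+(1-t)\bigl(\vec{b}-\tfrac12\vec{1}\bigr),\qquad t\in[0,1],
\end{equation*}
then has no zero on $\partial[0,1]^{m+\ell-2}$, since on each face $b_j\in\{0,1\}$ the two summands in the $j$-th coordinate share a common sign. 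Consequently $\tilde\Phi$ has the same Brouwer degree as the translation $\vec{b}\mapsto\vec{b}-\tfrac12\vec{1}$, namely $+1$, and some $\vec{a}\in(0,1)^{m+\ell-2}$ satisfies $\Phi(\vec{a})=\vec{0}$; this $\vec{a}$ and its accompanying $u_{\vec{a}}$ form the sought admissible solution.

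For both the comparison inequality (Part 2) and uniqueness I would examine $v:=u_{\vec{a}}-u_{\vec{b}}$. The identity $|\nabla u_{\vec{a}}|-|\nabla u_{\vec{b}}|=\nabla v\cdot\nabla(u_{\vec{a}}+u_{\vec{b}})/(|\nabla u_{\vec{a}}|+|\nabla u_{\vec{b}}|)$, extended to vanish where the denominator is zero, converts the difference of the two spacetime harmonic equations into a homogeneous linear elliptic equation for $v$ with bounded first-order coefficient. If $\max v>0$, the maximum is attained on some $\partial_i^+\Omega$ or $\partial_i^-\Omega$ with $i\geq 2$ (it vanishes on $\partial_1^{\pm}\Omega$), and since $v$ is constant on each such component the Hopf lemma yields $\mathbf{n}(v)>0$ at every point of it. On $\partial_i^+\Omega$ this forces $\mathbf{n}(u_{\vec{a}})>\mathbf{n}(u_{\vec{b}})\geq 0$ pointwise, so $\partial_\upsilon u_{\vec{a}}>0$ strictly, contradicting $\min_{\partial_i^+\Omega}\partial_\upsilon u_{\vec{a}}=0$; on $\partial_i^-\Omega$ one still obtains $\mathbf{n}(u_{\vec{a}})>\mathbf{n}(u_{\vec{b}})\geq 0$ strictly, whereas admissibility forces $\mathbf{n}(u_{\vec{a}})=-\partial_\upsilon u_{\vec{a}}\leq 0$, again a contradiction. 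Hence $v\leq 0$ on $\Omega$, and in particular $\vec{a}\leq\vec{b}$. Uniqueness follows from the same template applied to $v=u_{\vec{a}^{(1)}}-u_{\vec{a}^{(2)}}$: on the component carrying an extremum, the Hopf inequality orders $\partial_\upsilon u_{\vec{a}^{(1)}}$ and $\partial_\upsilon u_{\vec{a}^{(2)}}$ strictly, and evaluating at the zero-minimum of the larger function forces the smaller to be strictly negative there, violating its admissibility. The principal technical obstacle I anticipate is the degree argument itself---verifying the face-by-face sign pattern of $\Phi$ and designing the sign flip $\tilde\Phi$ so that a homotopy to a map of known nonzero degree avoids the origin on $\partial[0,1]^{m+\ell-2}$; everything downstream reduces to a routine application of the Hopf lemma.
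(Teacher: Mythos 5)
Your proof is correct, but the existence mechanism is genuinely different from the paper's. The paper introduces the ``optimal boundary value'' maps $T^{\pm}_i$, establishes their continuity and monotonicity (Lemma~\ref{lem:bprop}), and runs a monotone iteration $\vec{a}_{j+1}=\vec{T}(\vec{a}_j)$ starting from $\vec{a}_0=(1,\dots,1)$; the sequence is shown to be non-increasing, bounded below by $0$, and hence convergent to a fixed point, with the comparison $\vec{a}\leq\vec{b}$ obtained by re-running the iteration from $\vec{a}_0=\vec{b}$. You instead read off the signs of $\Phi$ on each face of the cube $[0,1]^{m+\ell-2}$ directly from the Hopf lemma, flip the $\partial_i^-$ coordinates so all faces match the affine map $\vec{b}\mapsto\vec{b}-\tfrac12\vec{1}$, and conclude via a Brouwer degree homotopy; this locates $\vec{a}$ in the open cube in one stroke and does not require the monotonicity lemma (Lemma~\ref{lem:bprop}(3)) at all. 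The trade-off is that the iteration scheme is entirely elementary and constructive (and, as a byproduct, shows $\vec{a}$ is the infimum of the admissible region, which is morally where your comparison claim also comes from), whereas your degree argument is shorter but invokes a heavier topological tool. Your uniqueness and comparison arguments are essentially identical in spirit to the paper's maximum-principle/Hopf analysis, though you derive the comparison directly from a single application of Hopf to $v=u_{\vec{a}}-u_{\vec{b}}$ rather than through iteration; both routes are valid, and the face sign verification together with the coordinate flip $\tilde\Phi_i^-=-\Phi_i^-$, which you correctly flag as the delicate point, checks out.
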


For consistency of orientation at the boundary, in this section we will solely make use of the unit outer normal $\mathbf{n}$ to $\partial\Omega$, instead of using the normal $\upsilon$. With this convention the auxiliary condition \eqref{equ:boundarycondition2} becomes
\begin{equation}\label{equ:boundarycondition99}
\displaystyle\min_{\partial^+_{i}\Omega}\mathbf{n}(u)=0 \text{ }\text{ for }\text{ } i\in\llbracket 2,m \rrbracket,\quad\quad\quad
\displaystyle\max_{\partial^-_{i}\Omega}\mathbf{n}(u)=0  \text{ }\text{ for }\text{ } i\in\llbracket 2,\ell \rrbracket.
\end{equation}
Next, we introduce some notation. Let $\vec{a}\in\mathbb{R}^{m+\ell-2}$ and consider the spacetime harmonic function $u_{\vec{a}}$ that satisfies the boundary conditions \eqref{equ:boundarycondition1}. Define a map
\begin{equation}
\Phi:\mathbb{R}^{m+\ell-2}\to \prod_{i=2}^m C^{1,\alpha}(\partial^+_{i}\Omega)\times \prod_{i=2}^\ell C^{1,\alpha}(\partial^-_{i}\Omega)
\end{equation}
given by
\begin{equation}\label{def:phi}
\Phi[\vec{a}]\coloneqq (\phi^+_2[\vec{a}],\dots,\phi^+_m[\vec{a}],
\phi^-_2[\vec{a}],\dots,\phi^-_\ell[\vec{a}]),
\end{equation}
where
\begin{equation}
\phi^+_i[\vec{a}]\coloneqq \mathbf{n}(u_{\vec{a}})\bigg|_{\partial^+_{i}\Omega}\text{ }\text{ for }\text{ } i\in\llbracket 2,m \rrbracket,\quad\quad\quad
\phi^-_i[\vec{a}]\coloneqq \mathbf{n}(u_{\vec{a}})\bigg|_{\partial^-_{i}\Omega} \text{ }\text{ for }\text{ } i\in\llbracket 2,\ell \rrbracket.
\end{equation}

\begin{lemma}\label{lem:phiconti}
The map $\Phi$ is continuous.
\end{lemma}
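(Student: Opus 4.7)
The plan is to prove sequential continuity via the standard compactness-plus-uniqueness paradigm for nonlinear elliptic boundary value problems. Take any sequence $\vec{a}_n\to\vec{a}$ in $\mathbb{R}^{m+\ell-2}$. First I would package the prescribed Dirichlet values as functions $h_n,h_{\vec{a}}\in C^{2,\alpha}(\partial\Omega)$ that are constant on each connected component of $\partial\Omega$; such locally constant data are automatically smooth, depend continuously on $\vec{a}_n$, and satisfy $h_n\to h_{\vec{a}}$ with uniformly bounded $C^{2,\alpha}$ norm. The a priori estimate \eqref{estimae} recalled from \cite[Section 4.1]{HKK} then yields the uniform bound $\|u_{\vec{a}_n}\|_{C^{2,\alpha}(\Omega)}\leq C$.

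Using the compact embedding $C^{2,\alpha}(\Omega)\hookrightarrow C^{2,\beta}(\Omega)$ for any $\beta\in(0,\alpha)$ together with Arzel\`a--Ascoli, every subsequence of $\{u_{\vec{a}_n}\}$ admits a further subsequence converging in $C^{2,\beta}$ to some limit $u_\infty\in C^{2,\alpha}(\Omega)$. Both the spacetime harmonic equation \eqref{shf1} (whose nonlinearity $|\nabla u|$ is continuous under $C^1$ convergence) and the Dirichlet conditions \eqref{equ:boundarycondition1} pass to the limit, so $u_\infty$ solves the Dirichlet problem \eqref{Dirichlet-local} with limiting boundary data $h_{\vec{a}}$. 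The uniqueness clause for \eqref{Dirichlet-local} recorded just above the lemma then forces $u_\infty=u_{\vec{a}}$, and a standard subsequence-of-subsequences argument upgrades this to convergence of the full sequence $u_{\vec{a}_n}\to u_{\vec{a}}$ in $C^{2,\beta}(\Omega)$.

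Restriction to each boundary component then yields $\phi_i^\pm[\vec{a}_n]\to \phi_i^\pm[\vec{a}]$ in $C^{1,\beta}(\partial_i^\pm\Omega)$ for every $\beta<\alpha$. To upgrade this to the $C^{1,\alpha}$ topology named in the definition of $\Phi$, I would examine the difference $w_n=u_{\vec{a}_n}-u_{\vec{a}}$, which satisfies the linear elliptic equation $\Delta w_n+b_n\cdot\nabla w_n=0$ with the bounded coefficient $b_n=(\mathrm{Tr}_g k)(\nabla u_{\vec{a}_n}+\nabla u_{\vec{a}})/(|\nabla u_{\vec{a}_n}|+|\nabla u_{\vec{a}}|)$, together with boundary data of $C^{2,\alpha}$ norm comparable to $|\vec{a}_n-\vec{a}|$. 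Applying Schauder estimates to this linearized equation, with a truncation near the zero set of the denominator to handle the joint critical points of $u_{\vec{a}_n}$ and $u_{\vec{a}}$, yields $\|w_n\|_{C^{2,\alpha}(\Omega)}\to 0$ and hence the desired $C^{1,\alpha}$ boundary convergence.

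The conceptual content is entirely contained in the compactness plus uniqueness step; the only technical obstacle is controlling $b_n$ near joint critical points, where the denominator can vanish and the coefficient is merely bounded rather than H\"older continuous. This is, however, a familiar difficulty for equations with the ``effectively linear'' structure of \eqref{shf1} emphasized earlier, and once it is absorbed, the continuity of $\Phi$ reduces to continuity of the solution operator for a standard linear Dirichlet problem.
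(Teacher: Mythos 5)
Your compactness-plus-uniqueness strategy is the same one the paper uses, but the final regularity upgrade is where the two arguments diverge, and your version has a gap. You get uniform $C^{2,\alpha}$ bounds and then pass to a subsequence converging only in $C^{2,\beta}$ with $\beta<\alpha$, which after restriction yields $C^{1,\beta}$ convergence of $\phi_i^\pm$ — one notch short of the $C^{1,\alpha}$ claimed in the lemma. Your proposed fix via Schauder estimates on $w_n=u_{\vec a_n}-u_{\vec a}$ does not close this gap: the coefficient $b_n=(\mathrm{Tr}_g k)\,\nabla(u_{\vec a_n}+u_{\vec a})/(|\nabla u_{\vec a_n}|+|\nabla u_{\vec a}|)$ is merely $L^\infty$ and can be discontinuous at joint critical points, whereas Schauder theory requires $C^\alpha$ coefficients. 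The suggested ``truncation near the zero set'' is not a concrete remedy — removing a neighborhood of the singular set does not help you control the full boundary $C^{1,\alpha}$ norm, and $L^p$ theory (which does tolerate $L^\infty$ coefficients) would only give $C^{1,\gamma}(\overline\Omega)$ bounds, which control $\mathbf n(w_n)$ in $C^{0,\gamma}(\partial\Omega)$ but not in $C^{1,\alpha}(\partial\Omega)$, since that latter norm involves tangential derivatives of the normal derivative and thus requires $C^{2,\alpha}$ interior control.

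The paper sidesteps all of this by exploiting the very observation you make but do not fully use: the Dirichlet data are constants, hence lie in $C^{2,\beta}(\partial\Omega)$ for \emph{every} $\beta\in(0,1)$, with norms controlled by $|\vec a_j|$ alone. Applying the a priori estimate \eqref{estimae} with some $\beta\in(\alpha,1)$ therefore yields uniform bounds on $u_{\vec a_j}$ in $C^{2,\beta}(\Omega)$, and the compact embedding $C^{2,\beta}\hookrightarrow C^{2,\alpha}$ (for $\beta>\alpha$) lands the Arzel\`a--Ascoli limit exactly in $C^{2,\alpha}(\Omega)$. Restriction then gives $C^{1,\alpha}(\partial_i^\pm\Omega)$ convergence directly, with no bootstrap and no need to touch the non-H\"older coefficient $b_n$ at all. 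The fix to your argument is thus not to bootstrap at the end but to run the estimate at a higher H\"older exponent from the start.
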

	
\begin{proof}
Let $\vec{a}_j\in \mathbb{R}^{m+\ell-2}$, $j\in\mathbb{N}$ be a sequence of vectors which converges to $\vec{a}_{\infty}$. The estimate \eqref{estimae} guarantees that the corresponding spacetime harmonic functions $u_{\vec{a}_j}$ are uniformly bounded in $C^{2,\beta}(\Omega)$ for any $\beta\in (\alpha,1)$. Observe that the Arzel\`a–Ascoli theorem yields the existence of a subsequence, still denoted by $u_{\vec{a}_j}$, which converges in $C^{2,\alpha}(\Omega)$. The limit is a spacetime harmonic function with boundary data given by $\vec{a}_{\infty}$. Since solutions to the Dirichlet problem \eqref{Dirichlet-local} are unique, we must have that the
limit agrees with $u_{\vec{a}_{\infty}}$. Therefore $u_{\vec{a}_j}$ converges to $u_{\vec{a}_{\infty}}$ in $C^{2,\alpha}(\Omega)$. This implies that $\phi^+_i[\vec{a}_j]$ converges to $\phi^+_i[\vec{a}_{\infty}]$ in $C^{1,\alpha}(\partial^+_{i}\Omega)$ for $i\in \llbracket 2,m \rrbracket$, and $\phi^-_i[\vec{a}_j]$ converges to $\phi^-_i[\vec{a}_{\infty}]$ in $C^{1,\alpha}(\partial^-_{i}\Omega)$ for $i\in \llbracket 2,\ell \rrbracket$.
\end{proof}
	
In order to facilitate the manipulation of boundary data, we introduce the following operations for $\vec{a}\in\mathbb{R}^{m+\ell-2}$ and $b\in\mathbb{R}$ in which the entry $a^+_i$ or $a^-_i$ is replaced by $b$, namely
\begin{equation}\label{def:pi}
\begin{split}
\pi^+_i(b,\vec{a})=&(a^+_2,\dots,a^+_{i-1},b,a^+_{i+1},\dots,a^+_m,a^-_2,
\dots,a^-_\ell),\quad\text{ } i\in \llbracket 2,m \rrbracket,\\
\pi^-_i(b,\vec{a})=&(a^+_2,\dots,a^+_m,a^-_2,\dots,a^-_{i-1},b,a^-_{i+1},
\dots,a^-_\ell),\quad\text{ } i\in \llbracket 2,\ell \rrbracket.
\end{split}
\end{equation}
Furthermore, consider the optimal values for the Dirichlet data of each boundary component, that is
\begin{equation}\label{def:b+-}
\begin{split}
T^+_{i}(\vec{a})\coloneqq & \inf\{b\in\mathbb{R}\,|\, \min_{\partial^+_i\Omega } \phi^+_i[\pi^+_i(b,\vec{a})] \geq 0\}\ \quad\textup{for}\ \quad i\in \llbracket 2,m \rrbracket,\\
T^-_{i}(\vec{a})\coloneqq & \inf\{b\in\mathbb{R}\,|\, \max_{\partial^-_i\Omega } \phi^-_i[\pi^-_i(b,\vec{a})] \geq 0\}\ \quad\textup{for}\ \quad i\in \llbracket 2,\ell \rrbracket.
\end{split}
\end{equation}
Notice that $T^+_{i}(\vec{a})$ does not depend on $a^+_i$ and $T^-_{i}(\vec{a})$ does not depend on $a^-_i$. Furthermore, the maximum principle combined with the Hopf lemma shows that the sets used in the definition of \eqref{def:b+-} are non-empty and bounded from below. Therefore $T^+_{i}(\vec{a})$ and $T^-_{i}(\vec{a})$ are finite. The next result collects the essential properties of these quantities.

\begin{lemma}\label{lem:bprop}
\hfill
\begin{enumerate}
\item Fix $i\in \llbracket 2,m \rrbracket$ and let $c\in\mathbb{R}$. Then
$\min_{\partial^+_i\Omega} \phi^+_i[\pi^+_i(c,\vec{a})]$ is positive, zero, or negative when $c>T^+_{i}(\vec{a})$, $c=T^+_{i}(\vec{a})$, or $c<T^+_{i}(\vec{a})$ respectively.

\item Fix $i\in \llbracket 2,\ell \rrbracket$ and let $c\in\mathbb{R}$. Then
$\max_{\partial^-_i\Omega} \phi^-_i[\pi^-_i(c,\vec{a})]$ is positive, zero, or negative when $c>T^-_{i}(\vec{a})$, $c=T^-_{i}(\vec{a})$, or $c<T^-_{i}(\vec{a})$ respectively.
			
\item If $\vec{a},\vec{b} \in\mathbb{R}^{m+\ell-2}$ with $\vec{a}\leq\vec{b}$ then
$T^+_{i}(\vec{a})\leq T^+_{i}(\vec{b})$ for $i\in\llbracket 2,m \rrbracket$, and
$T^-_{i}(\vec{a})\leq T^-_{i}(\vec{b})$ for $i\in\llbracket 2,\ell \rrbracket$.

\item The functions $T^+_{i}:\mathbb{R}^{m+\ell-2}\rightarrow\mathbb{R}$, $i\in\llbracket 2,m \rrbracket$ and $T^-_{i}:\mathbb{R}^{m+\ell-2}\rightarrow\mathbb{R}$, $i\in\llbracket 2,\ell \rrbracket$ are continuous.
\end{enumerate}
\end{lemma}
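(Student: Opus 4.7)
\emph{Proof proposal.} The plan is to exploit the fact that although the spacetime harmonic equation \eqref{shf1} is nonlinear, the difference $w = u_{\vec{b}} - u_{\vec{a}}$ of two solutions on the same initial data satisfies a genuinely linear elliptic equation $\Delta w + b^j \partial_j w = 0$ with bounded coefficients and no zero-order term. This arises from the identity
\[
|\nabla u_{\vec{b}}| - |\nabla u_{\vec{a}}| = \frac{\nabla(u_{\vec{a}} + u_{\vec{b}})}{|\nabla u_{\vec{a}}| + |\nabla u_{\vec{b}}|}\cdot \nabla w,
\]
as in \cite[Section 4.1]{HKK}, with the standard approximation handling the locus where the denominator vanishes. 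Once this linearization is in place, the weak and strong maximum principles and the Hopf boundary point lemma become available for $w$, and all four assertions reduce to tracking the sign of $\mathbf{n}(w)$ on selected boundary components under carefully prescribed Dirichlet data.

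For parts (1) and (2), I would fix $\vec{a}$ and two values $c_1 < c_2$, and denote by $u_j$ the solution of \eqref{shf1} with boundary data $\pi^+_i(c_j, \vec{a})$ (resp.\ $\pi^-_i(c_j, \vec{a})$). The difference $w = u_2 - u_1$ equals the positive constant $c_2 - c_1$ on $\partial^+_i\Omega$ (resp.\ $\partial^-_i\Omega$) and vanishes on every other boundary component. The strong maximum principle applied to $w - (c_2 - c_1)$ forces this function strictly negative in the interior, and the Hopf lemma then yields $\mathbf{n}(w) > 0$ pointwise on $\partial^+_i\Omega$ (resp.\ $\partial^-_i\Omega$). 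Since the relevant component is compact, the strict pointwise inequality passes to the minimum (resp.\ maximum) of $\phi^\pm_i$. Combined with the continuity of $\Phi$ from Lemma~\ref{lem:phiconti} and the definition of $T^\pm_i(\vec{a})$ as an infimum, the resulting strict monotonicity in $c$ delivers the trichotomies in (1) and (2).

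For part (3), I would fix $\vec{a} \le \vec{b}$ and set $c = T^+_i(\vec{b})$, then compare the solutions $u_{\vec{a}'}$ and $u_{\vec{b}'}$ with boundary data $\pi^+_i(c, \vec{a})$ and $\pi^+_i(c, \vec{b})$. Their difference $w$ vanishes on $\partial^+_i\Omega$ and is $\ge 0$ on the remainder of $\partial\Omega$, so $w$ attains its minimum on $\partial^+_i\Omega$ and is strictly positive in the interior; the Hopf lemma gives $\mathbf{n}(w) < 0$ on $\partial^+_i\Omega$, whence $\min_{\partial^+_i\Omega}\phi^+_i[\pi^+_i(c, \vec{a})] > \min_{\partial^+_i\Omega}\phi^+_i[\pi^+_i(c, \vec{b})] = 0$. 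By part (1) this forces $T^+_i(\vec{a}) < c$, which is stronger than needed; the argument for $T^-_i$ is identical after swapping min with max. For part (4), the function $F(c, \vec{a}) := \min_{\partial^+_i\Omega}\phi^+_i[\pi^+_i(c,\vec{a})]$ is jointly continuous (by Lemma~\ref{lem:phiconti} together with continuity of the minimum on compact sets) and strictly increasing in $c$ with unique zero at $c = T^+_i(\vec{a})$. Given $\vec{a}_n \to \vec{a}$ and $\varepsilon > 0$, the values $F(T^+_i(\vec{a}) \pm \varepsilon, \vec{a})$ have opposite signs, and joint continuity transfers this to $\vec{a}_n$ for large $n$, pinning $T^+_i(\vec{a}_n)$ within $\varepsilon$ of $T^+_i(\vec{a})$; the case of $T^-_i$ is handled analogously.

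The main technical subtlety I expect concerns applying the Hopf boundary point lemma to the linearized equation: standard statements require bounded drift coefficients and uniform ellipticity, yet the coefficient vector $\tfrac{\nabla(u_{\vec{a}}+u_{\vec{b}})}{|\nabla u_{\vec{a}}|+|\nabla u_{\vec{b}}|}$ is only a priori bounded away from points where both gradients simultaneously vanish. The approximation strategy of \cite[Section 4.1]{HKK} addresses this issue, and I would invoke it directly rather than reproduce the argument.
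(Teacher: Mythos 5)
Your proof follows essentially the same strategy as the paper's: linearize the difference of two spacetime harmonic functions to a drift equation, invoke the maximum principle and Hopf boundary lemma, extract strict monotonicity in $c$ for part (1)--(2), and bootstrap that to monotonicity in $\vec{a}$ and continuity for parts (3)--(4). Two small remarks. First, in your argument for part (3) the phrase ``strictly positive in the interior'' and the invocation of the Hopf lemma implicitly assume $w\not\equiv 0$; this fails precisely when $\vec{a}$ and $\vec{b}$ agree on every coordinate except possibly the $i$-th one, since after applying $\pi^+_i(c,\cdot)$ the boundary data coincide and $w\equiv 0$. The paper avoids this by only asserting the non-strict inequality $\mathbf{n}(\tilde{v})\leq 0$ on $\partial^+_i\Omega$, which follows already from $\tilde{v}\geq 0$ in $\Omega$ together with $\tilde{v}=0$ on that component --- no strong maximum principle or Hopf lemma needed --- and this weak inequality is all that part (3) requires. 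Your stronger strict inequality is fine in the non-degenerate case, but you should separate out the trivial degenerate case. Second, your closing concern that the drift coefficient $\vec{\mathcal{K}} := (\mathrm{Tr}_g k)\,\nabla(u_{\vec{a}}+u_{\vec{b}})/(|\nabla u_{\vec{a}}|+|\nabla u_{\vec{b}}|)$ might be unbounded near common critical points is unfounded: by the triangle inequality the quotient has norm at most $1$, so $|\vec{\mathcal{K}}|\leq \sup_\Omega|\mathrm{Tr}_g k|$ everywhere the gradients do not simultaneously vanish, and the Hopf lemma for uniformly elliptic operators with bounded measurable drift applies directly without any approximation. Your IVT-flavored argument for part (4) is a clean alternative to the paper's $\limsup/\liminf$ contradiction but amounts to the same content.
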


\begin{proof}
We begin with $(1)$. By Lemma \ref{lem:phiconti} the quantity $\phi^+_{i}[\vec{a}]$ depends continuously on $\vec{a}$, and therefore $\min_{\partial^+_i\Omega} \phi^+_i[\pi^+_i( {c} ,\vec{a})] $ is a continuous function of $c$. In order to establish $(1)$, it suffices to show that for fixed $\vec{a}$, the function $c\mapsto\min_{ \partial^+_i\Omega} \phi^+_i[\pi^+_i({c} ,\vec{a})]$ is strictly increasing. To this end let $c<\tilde{c}$, and consider the spacetime harmonic functions $u_{c,\vec{a}}$ and $u_{\tilde{c},\vec{a}}$ satisfying the boundary conditions \eqref{equ:boundarycondition1} with $a^+_i$ replaced by $c$ and $\tilde{c}$, respectively. A direct computation shows that $v\coloneqq u_{\tilde{c},\vec{a}}-u_{c,\vec{a}}$ solves the equation
\begin{equation}\label{alkjfja}
\Delta v +\left(\mathrm{Tr}_g k\right) \frac{\nabla (v +2u_{c,\vec{a}})}
{|\nabla (v +u_{c,\vec{a}})|+| \nabla u_{c,\vec{a}}|}\cdot\nabla v  =0\quad \textup{ on }\quad \Omega.
\end{equation}
Moreover, $v=0$ on $\partial\Omega\setminus \partial^+_{i}\Omega$ and $v=\tilde{c}-c>0$ on $ \partial^+_{i}\Omega$. Thus, by the Hopf lemma
$\mathbf{n}(v)>0$ on $\partial^+_{i}\Omega$. This implies that
\begin{equation}
\min_{\partial^+_{i}\Omega} \phi^+_i[\pi^+_i(\tilde{c} ,\vec{a})] =\min_{\partial^+_{i}\Omega}\left(\mathbf{n}(v)+\phi^+_i[\pi^+_i( {c} ,\vec{a})] \right)>\min_{\partial^+_{i}\Omega} \phi^+_i[\pi^+_i( {c} ,\vec{a})],
\end{equation}
which completes the proof of $(1)$. The proof of $(2)$ is similar and so we omit it.
		
Next, consider statement (3). Assume that $\vec{a},\vec{b}\in\mathbb{R}^{m+\ell-2}$ with $\vec{a}\leq\vec{b}$, let
$b\in\mathbb{R}$, and fix $i\in\llbracket 2,m \rrbracket$. Denote the spacetime harmonic functions satisfying the boundary conditions \eqref{equ:boundarycondition1} associated to $\pi_i^+(b,\vec{a})$ and $\pi_i^+(b,\vec{b})$, by $u_{b,\vec{a}}$ and $u_{b,\vec{b}}$, respectively.
As above, a computation shows that the function $\tilde{v}\coloneqq u_{b,\vec{b}}-u_{b,\vec{a}}$ solves an equation analogous to \eqref{alkjfja}.
Furthermore, $\tilde{v} \geq 0$ on $\partial\Omega$ and $\tilde{v}=0$ on
$\partial^+_{i}\Omega$. This implies that $ \mathbf{n}(\tilde{v})\leq 0$ on $\partial^+_{i}\Omega$, and thus
\begin{equation}
\min_{\partial^+_{i}\Omega} \phi^+_i[\pi^+_i(b,\vec{b})] =\min_{\partial^+_{i}\Omega}\left(\mathbf{n}(\tilde{v})+ \phi^+_i[\pi^+_i(b,\vec{a})]\right)\leq \min_{\partial^+_{i}\Omega} \phi^+_i[\pi^+_i(b,\vec{a})].
\end{equation}
Together with the monotonicity of $\min_{\partial^+_{i}\Omega} \phi^+_i[\pi^+_i(b,\vec{a})]$ in $b$, it follows that $T^+_{i}(\vec{a})\leq T^+_{i}(\vec{b})$. Similar arguments may be used to establish the remaining cases of $(3)$.

Lastly, we address $(4)$. Let $\vec{a}_j \in\mathbb{R}^{m+\ell-2}$ be a sequence of vectors which converges to $\vec{a}_{\infty}$. To prove that $T^+_i$ is continuous, it suffices to show that
\begin{equation}\label{equ:limsup}
\limsup_{j\to\infty} T^+_{i}(\vec{a}_j) \leq T^+_i(\vec{a}_{\infty}), \quad\text{ and }\quad
\liminf_{j\to\infty} T^+_{i}(\vec{a}_j) \geq T^+_i(\vec{a}_{\infty}).
\end{equation}
Suppose that the first inequality of \eqref{equ:limsup} fails. Then up to a subsequence, there exists $\epsilon>0$ such that $T^+_{i}(\vec{a}_j)\geq T^+_i(\vec{a}_{\infty})+\epsilon$. Therefore
\begin{align}\label{aoagjhos}
\begin{split}
0=\lim_{j\to\infty}\min_{\partial^+_i\Omega }\phi^+_i[\pi^+_i( T^+_{i}(\vec{a}_j) ,\vec{a}_j)]  &\geq \lim_{j\to\infty}\min_{\partial^+_i\Omega}\phi^+_i[\pi^+_i( T^+_i(\vec{a}_{\infty})+\epsilon ,\vec{a}_j)]\\
&=\min_{\partial^+_i\Omega }\phi^+_i[\pi^+_i( T^+_i(\vec{a}_{\infty})+\epsilon ,\vec{a}_{\infty})],
\end{split}
\end{align}
where we have used monotonicity of the map $c\mapsto\min_{\partial^+_i\Omega}\phi^+_i[\pi^+_i( {c} ,\vec{a})]$, as well as Lemma \ref{lem:phiconti}. Furthermore, since the monotonicity is strict it follows that the right-hand side of \eqref{aoagjhos} is strictly positive, which leads to a contradiction. We conclude that the first inequality of \eqref{equ:limsup} holds. The second inequality of \eqref{equ:limsup} may be dealt with similarly, and thus the continuity of $T^+_i$ is established. The continuity of $T^-_i$ can be proved in an analogous way.
\end{proof}

We now have the tools required to establish Proposition~\ref{lem:specialbd-app}. This result may be reformulated in a concise manner through the use of  notation developed in this section, together with the map $\vec{T}  :\mathbb{R}^{m+\ell-2}\to\mathbb{R}^{m+\ell-2}$ defined by
\begin{equation}\label{def:b}
\vec{T}(\vec{a})=(T^+_{2}(\vec{a}),\dots,T^+_{m}(\vec{a}),T^-_{2}(\vec{a}),\dots, T^-_{\ell}(\vec{a})).
\end{equation}

\begin{proposition}\label{lem:specialbd-T}
There exists a unique $\vec{a}\in(0,1)^{m+\ell-2}$ such that $\vec{T}(\vec{a})=\vec{a}$.
Furthermore, if $\vec{b}\in\mathbb{R}^{m+\ell-2}$ has the property that $\phi^+_i[\vec{b}]\geq 0$ on $\partial^+_{i}\Omega$ for all $i\in\llbracket 2,m \rrbracket $ and $\phi^-_i[\vec{b}]\geq 0$ on $\partial^-_{i}\Omega$ for all $i\in\llbracket 2,\ell \rrbracket $, then $\vec{a}\leq \vec{b}$. In particular, $u_{\vec{a}}\leq u_{\vec{b}}$ on $\Omega$.
\end{proposition}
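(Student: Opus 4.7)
The plan is to obtain the three assertions — existence, comparison, and uniqueness — of the fixed point of $\vec{T}$ through a combination of Brouwer's fixed point theorem and a direct application of the strong maximum principle together with the Hopf lemma to differences of spacetime harmonic functions.

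First I would verify that $\vec{T}$ maps the cube $[0,1]^{m+\ell-2}$ into its open interior $(0,1)^{m+\ell-2}$. For fixed $\vec{a}\in[0,1]^{m+\ell-2}$ and $i\in\llbracket 2,m\rrbracket$, the solution $u_{\pi^+_i(1,\vec{a})}$ attains its maximum value $1$ on $\partial^+_i\Omega$ and is not identically constant (since $u=0$ on $\partial^-_1\Omega$); the Hopf lemma then gives $\mathbf{n}(u)>0$ strictly on $\partial^+_i\Omega$, so $T^+_i(\vec{a})<1$, and the analogous choice of zero boundary data forces $T^+_i(\vec{a})>0$. The same pair of arguments handles $T^-_i$. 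Combined with the continuity provided by Lemma~\ref{lem:bprop}(4), Brouwer's fixed point theorem yields a fixed point $\vec{a}\in(0,1)^{m+\ell-2}$.

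Next, for the comparison assertion, let $\vec{b}$ satisfy $\phi^\pm_i[\vec{b}]\geq 0$ and set $v = u_{\vec{a}}-u_{\vec{b}}$. A routine computation in the spirit of \eqref{alkjfja} shows that $v$ satisfies a linear elliptic equation with bounded first order coefficients and no zeroth order term, so its maximum $M$ is attained on $\partial\Omega$, where it equals $0$ on $\partial^\pm_1\Omega$ and the constant difference $a^\pm_i-b^\pm_i$ on the remaining components. Suppose for contradiction that $M>0$ is attained on some $\partial^+_i\Omega$ with $i\geq 2$; the strong maximum principle forces $v<M$ in the interior, so Hopf yields $\mathbf{n}(v)>0$ at every point of $\partial^+_i\Omega$, and the hypothesis $\mathbf{n}(u_{\vec{b}})\geq 0$ then gives $\mathbf{n}(u_{\vec{a}})>0$ strictly throughout that component, contradicting the fixed point condition $\min_{\partial^+_i\Omega}\mathbf{n}(u_{\vec{a}})=0$. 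The parallel case of a positive maximum on $\partial^-_i\Omega$ contradicts $\max_{\partial^-_i\Omega}\mathbf{n}(u_{\vec{a}})=0$. Therefore $M\leq 0$, $u_{\vec{a}}\leq u_{\vec{b}}$, and in particular $\vec{a}\leq\vec{b}$.

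Finally, for uniqueness I would apply the same maximum principle/Hopf strategy to the difference $w = u_{\vec{a}_2}-u_{\vec{a}_1}$ of two supposed fixed points. If $w$ has a positive boundary maximum attained on some $\partial^+_i\Omega$, Hopf gives $\mathbf{n}(u_{\vec{a}_1})<\mathbf{n}(u_{\vec{a}_2})$ pointwise on that component; evaluating at a minimum point of $\mathbf{n}(u_{\vec{a}_2})$ where this quantity equals zero yields $\mathbf{n}(u_{\vec{a}_1})<0$ there, violating $\min\mathbf{n}(u_{\vec{a}_1})=0$. A positive maximum on some $\partial^-_i\Omega$ similarly contradicts $\max\mathbf{n}(u_{\vec{a}_2})=0$. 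Hence $w\leq 0$, and by symmetry $w\equiv 0$, giving $\vec{a}_1=\vec{a}_2$. The main obstacle I anticipate is precisely this last step: the hypothesis of the comparison statement cannot be verified by a second fixed point (which satisfies $\phi^+_i\geq 0$ but only $\phi^-_i\leq 0$), so uniqueness does not fall out as a direct corollary and must instead be established by independently exploiting both extremal balance conditions of a fixed point within the Hopf argument, as sketched above.
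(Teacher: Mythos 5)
Your proof is correct, though it takes a genuinely different route from the paper's in the existence and comparison steps. The paper builds the fixed point by monotone iteration: starting from $\vec{a}_0=(1,\dots,1)$ and setting $\vec{a}_{j+1}=\vec{T}(\vec{a}_j)$, it uses the monotonicity of $\vec{T}$ from Lemma~\ref{lem:bprop}(3) to show the sequence is non-increasing and bounded below, then passes to the limit via continuity. You replace this by Brouwer's fixed-point theorem, verified by two Hopf-lemma checks (at boundary values $b=0$ and $b=1$, invoking Lemma~\ref{lem:bprop}(1),(2)) that $\vec{T}$ maps $[0,1]^{m+\ell-2}$ continuously into its open interior; this route avoids the monotonicity statement altogether. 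More substantively, the paper obtains the comparison assertion $\vec{a}\leq\vec{b}$ as a corollary of uniqueness: it re-runs the monotone iteration starting from $\vec{b}$, produces a fixed point $\vec{a}_\infty\leq\vec{b}$, and concludes $\vec{a}=\vec{a}_\infty$ by uniqueness. You instead prove the comparison directly by applying the strong maximum principle and Hopf lemma to $u_{\vec{a}}-u_{\vec{b}}$, which is cleaner and disentangles the logical dependencies. Your closing remark---that the comparison assertion cannot be used to deduce uniqueness, because a second fixed point satisfies $\phi^-_i\leq 0$ rather than $\geq 0$ and so fails the hypothesis---is exactly right, and it explains why both you and the paper must supply an independent maximum-principle argument for uniqueness (the two arguments are essentially identical). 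The paper's iterative route has the modest virtue of being constructive, producing an explicit sequence converging to $\vec{a}$, whereas Brouwer gives existence only; on balance the two approaches are of comparable length and both rest on the same Hopf-lemma machinery.
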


\begin{proof}
Consider first the existence of $\vec{a}$. Let $\vec{a}_0=(1,1,\dots, 1)$ and inductively define $\vec{a}_{j+1}=\vec{T} (\vec{a}_j)$.  By the Hopf lemma, $\phi^+_i[\vec{a}_0 ]\geq 0$ on $\partial^+_{i}\Omega\ \textup{for}\ i\in\llbracket 2,m \rrbracket$ and $\phi^-_i[\vec{a}_0 ]\geq 0$ on $\partial^-_{i}\Omega\ \textup{for}\ i\in\llbracket 2,\ell \rrbracket$. Moreover, according to the definition of $\vec{T}$ we must have $\vec{a}_1 \leq \vec{a}_0$. Inductively applying part $(3)$ of Lemma~\ref{lem:bprop} then shows that the components of $\vec{a}_j$ each form a monotone non-increasing sequence. Furthermore, an inductive application of the Hopf lemma shows that each component of $\vec{a}_j$ is non-negative. Therefore, $\vec{a}_j$ converges to a limit $\vec{a}$. No component of $\vec{a}$ can be 0 or 1, which again follows from the Hopf lemma, and thus $\vec{a}\in (0,1)^{m+\ell-2}$. Lastly, part $(4)$ of Lemma~\ref{lem:bprop} states that $\vec{T}$ is continuous, and hence
$\vec{T}(\vec{a})=\vec{a}$.
		
Next, we prove uniqueness of the solution $\vec{a}$. Suppose there exists another fixed point $\vec{b}\neq\vec{a}$. Recall that $u_{\vec{a}}$ and $u_{\vec{b}}$ are the unique spacetime harmonic functions with Dirichlet boundary conditions determined by $\vec{a}$ and $\vec{b}$, from \eqref{equ:boundarycondition1}. We may assume without loss of generality that there is a component of $\vec{b}$ which is strictly larger than the corresponding component of $\vec{a}$, otherwise the roles of $\vec{b}$ and $\vec{a}$ may be reversed in the following argument. Observe that the maximum of $u_{\vec{b}}-u_{\vec{a}}$ must be achieved on $\partial\Omega\setminus \left(\partial_1^+ \Omega\cup\partial_1^- \Omega\right)$, since this function satisfies an equation of the form \eqref{alkjfja}. If the maximum is achieved on $\partial^+_i\Omega$, for some $i\in\llbracket 2,m \rrbracket$, then the Hopf lemma applied to $u_{\vec{b}}-u_{\vec{a} }$ implies that
\begin{equation}
\phi^+_i[\vec{b}]=\mathbf{n}(u_{\vec{b}})\bigg|_{\partial^+_{i}\Omega} >\mathbf{n}(u_{\vec{a}})\bigg|_{\partial^+_{i}\Omega}=\phi^+_i[\vec{a}].
\end{equation}
This, however, contradicts the fact that
\begin{equation}
\min_{\partial^+_{i}\Omega} \phi^+_i[\vec{b} ]=\min_{\partial^+_{i}\Omega}
\phi_i[\vec{a}]=0.
\end{equation}
A similar argument holds if the maximum is achieved on $\partial^-_i\Omega$, for some $i\in\llbracket 2,\ell \rrbracket$. Therefore, $\vec{a}$ is unique.
		
Now suppose that $\vec{b}\in\mathbb{R}^{m+\ell-2}$ satisfies  $\phi^+_i[\vec{b}]\geq 0$ on $\partial^+_{i}\Omega$ for all $i\in\llbracket 2,m \rrbracket $ and $\phi^-_i[\vec{b}]\geq 0$ on $\partial^-_{i}\Omega$ for all $i\in\llbracket 2,\ell \rrbracket $. The Hopf lemma shows that each component of $\vec{b}$ is strictly positive. By choosing $\vec{a}_0=\vec{b}$ and repeating the above iteration procedure, we find that there exists a fixed point solution $\vec{a}_{\infty}\in (0,1)^{m+\ell-2}$ with $\vec{a}_{\infty}\leq \vec{b}$. According to the uniqueness of such fixed points proven above, it follows that $\vec{a}=\vec{a}_{\infty}\leq \vec{b}$. Finally, the maximum principle shows that $u_{\vec{a}}\leq u_{\vec{b}}$ on $\Omega$.
\end{proof}

\section{Proof of Theorem~\ref{thm:EGM}}
\label{sec5}
\setcounter{equation}{0}
\setcounter{section}{5}

Let $(\Omega,g,k)$ be a smooth orientable 3-dimensional compact initial data set with boundary $\partial\Omega$, satisfying the dominant energy condition and $H_2(\Omega,\tilde{\partial} \Omega;\mathbb{Z})=0$. Suppose that the boundary may be decomposed into a disjoint union
\begin{equation}
\partial\Omega=\big(\sqcup_{i=1}^m\partial^{+}_{i}\Omega\big)\sqcup \big(\sqcup_{i=1}^\ell\partial^-_{i}\Omega\big),
\end{equation}
where the connected components are organized so that $\theta_{+}\left(\partial_i^+ \Omega\right)\geq 0$ with respect to the outer normal, and $\theta_{+}\left(\partial_i^- \Omega\right)\leq 0$ with respect to the inner normal. Moreover, assume that $\partial^+_{1}\Omega$ has positive genus, that $\partial^+_{i}\Omega$ is of zero genus for $i=2,\ldots,m$, and that $\Omega$ satisfies the homotopy condition with respect to $\partial^+_{1}\Omega$. By Proposition \ref{lem:separable}, it may be assumed that the ordering of $\partial_i^- \Omega$, $i\in \llbracket 1,\ell \rrbracket$ has been arranged so that $\partial_1^- \Omega$ is not separable from $\partial_1^+ \Omega$ by a 2-sphere. Next, let $u\in C^{2,\alpha}(\Omega)$ be the unique solution of \eqref{shf1}, \eqref{equ:boundarycondition1}, and \eqref{equ:boundarycondition2} given by Proposition \ref{lem:specialbd-app}.
We may then apply Proposition \ref{pro:div} and the discussion of Section \ref{sec3}, in particular \eqref{aifjq0gn}, to find
\begin{align}\label{aifjq0gn124}
\begin{split}
\int_{\Omega}\left(\frac{1}{2}\frac{|\bar{\nabla}^2 u|^2}{|\nabla u|}+\mu|\nabla u|+J(\nabla u)\right)dV \leq & 2\pi\int_{0}^{1}\chi(\Sigma_t)dt\\
&+\sum_{i=1}^{\ell}\int_{\partial_i^- \Omega}\theta_{+} |\nabla u|dA
-\sum_{i=1}^{m}\int_{\partial_i^+ \Omega}\theta_{+} |\nabla u|dA.
\end{split}
\end{align}

We will now show that the level set Euler characteristics satisfy $\chi(\Sigma_t)\leq 0$ for all regular values $t\in[0,1]$. This will be a consequence of the special boundary conditions chosen for $u$, the vanishing second relative homology, and the fact that $\Omega$ satisfies the homotopy condition with respect to a surface of positive genus. Let $\Sigma_t$ be a regular level set for $t\neq 0,1$. It suffices to show that $\chi(\Sigma'_t)\leq 0$, for an arbitrary connected component $\Sigma'_t$ of $\Sigma_t$. Note that $\Sigma'_{t}$ is a 2-sided properly embedded submanifold, which does not intersect $\partial\Omega$ in light of the boundary conditions chosen for $u$. Thus, we need only show that $\Sigma'_t$ is not a 2-sphere.
	
Proceeding by contradiction, let us suppose that ${\Sigma}'_t$ is indeed a 2-sphere. Since the second homology relative to certain boundary components vanishes, $H_2(\Omega;\mathbb{Z})$ is generated by boundary cycles and hence there exist $c^+_{i}$, $c^-_{i}\in\mathbb{Z}$ such that
\begin{equation}\label{equ:generator}
[{\Sigma}'_t]+\sum_{i=1}^m c^+_{i} [\partial^+_{i}\Omega]+\sum_{i=1}^\ell c^-_{i} [\partial^-_{i}\Omega]=0\qquad \text{ in }\qquad H_2(\Omega;\mathbb{Z}).
\end{equation}
Let $\tilde{\Omega}$ be the compact manifold without boundary obtained by filling in 3-balls and handlebodies along $\partial\Omega$. Then $[\Sigma'_t]=0$ as an element in $H_2(\tilde{\Omega};\mathbb{Z})$. This implies that there exists a domain $\tilde{D}\subset\tilde{\Omega}$ such that $\Sigma'_t=\partial \tilde{D}$. Set $D=\tilde{D}\cap \Omega$. Then the boundary of $D$ consists of $\Sigma'_t$ and some (possibly empty) connected components of $\partial\Omega$. Therefore, by changing the orientation of $\Sigma'_t$ if necessary, the coefficients in \eqref{equ:generator} are either $1$ or $0$. Moreover, since the sum of all boundary cycles is trivial,
we may assume that $c^+_{1}=0$ by further changing the orientation of $\Sigma'_t$ as needed. In fact, it must also be the case that $c_1^- =0$
because $\partial_1^- \Omega$ is not separable from $\partial_1^+ \Omega$ by a 2-sphere. Therefore, there exist index sets $I\subset \llbracket 2,m\rrbracket$ and $J\subset \llbracket 2,\ell\rrbracket$ such that
\begin{equation}
[{\Sigma}'_t]+\sum_{i\in I}[\partial^+_{i}\Omega]+\sum_{i\in J}[\partial^-_{i}\Omega]=0\qquad \text{ in }\qquad H_2(\Omega;\mathbb{Z}),
\end{equation}
and $\partial D={\Sigma}'_t\sqcup (\sqcup_{i\in I}\partial^+_{i}\Omega)\sqcup (\sqcup_{i\in J}\partial^-_{i}\Omega)$. By the maximum principle, the maximum and minimum of $u$ on $D$ must be achieved on $\partial D$. However, the Hopf lemma together with the boundary condition \eqref{equ:boundarycondition2} show that neither of these extrema can occur on $\partial D\cap\partial\Omega$. It follows that both maximum and minimum are obtained on $\Sigma'_t$, and hence $u$ is constant within $D$. This contradicts the regularity of $\Sigma'_t$ as a level set. We conclude that $\Sigma'_t$ cannot be a 2-sphere, and thus must have nonpositive Euler characteristic.

Consider now the case when $t=0,1$. Note that $\chi(\Sigma_1)\leq 0$ by assumption, since $\Sigma_1=\partial_1^+ \Omega$ is taken to have positive genus. Furthermore, by the Hopf lemma both of these are regular values for $u$. Therefore, a small neighborhood of $\partial_1^{-} \Omega$ is foliated by regular level sets $\Sigma_t$, $t>0$ all having the same topology as this boundary component. Since the Euler characteristic of these level sets is nonpositive, the same is true for the boundary component: $\chi(\Sigma_0)\leq 0$.

According to \eqref{aifjq0gn124}, the dominant energy condition, the sign constraint on the null expansions of the boundary, and the observation concerning Euler characteristics of level sets imply that
\begin{equation}\label{alfhi39}
\theta_+ |\nabla u|=0 \quad\text{ on }\quad \partial\Omega,\quad\quad\quad\quad
\chi(\Sigma_t)=0 \quad \text{ for all regular values } \quad t\in[0,1],
\end{equation}
\begin{equation}\label{equ:K=0}
|\nabla^2 u+|\nabla u|k|=0,\quad\quad\quad  |\nabla u|\mu+J(\nabla u)=0,\quad \text{ on }\quad \Omega.
\end{equation}
It follows from the first equation of \eqref{equ:K=0} that whenever $|\nabla u|\neq 0$ we have
\begin{equation}
|\nabla \log|\nabla u||\leq\frac{|\nabla^2 u|}{|\nabla u|}\leq \sup_{\Omega}|k|.
\end{equation}
Thus, applying this estimate along curves emanating from $\partial_1^{\pm}\Omega$, where $|\nabla u|>0$ by the Hopf lemma, shows that $|\nabla u|>0$ on all of $\Omega$. In particular, this is incompatible with the boundary condition \eqref{equ:boundarycondition2}, so there can be only two boundary components and $m=\ell=1$. Moreover, since all level sets are regular and have vanishing Euler characteristic we find that $\Sigma_t \cong T^2$, $\Omega\cong [0,1]\times T^2$, and the metric can be expressed as
\begin{equation}\label{matricllaf}
g=|\nabla u|^{-2}dt^2 +g_t
\end{equation}
for some family of metrics $g_t$ on the torus. In addition, the nonvanishing gradient together with the dominant energy condition and \eqref{equ:K=0}, imply that $\mu=|J|_g=-J(\nu)$ on $\Omega$ where $\nu=\tfrac{\nabla u}{|\nabla u|}$. In particular, the orthogonal projection of $J$ to any level set vanishes $J|_{\Sigma_t}=0$.

Next, note that $II_t=\tfrac{\nabla^2 u}{|\nabla u|}|_{\Sigma_t}$ is the second fundamental form of the $t$-level set, and therefore the first equation of \eqref{equ:K=0} yields
\begin{equation}\label{gajaiogj}
0= \big( \nabla^2 u+|\nabla u|k\big)\big|_{\Sigma_t}= \left(II_t+k\big|_{\Sigma_t}  \right)|\nabla u|.
\end{equation}
This shows that the (future) null second fundamental forms vanish $\chi^+ =0$, and hence each level set $\Sigma_t$ is a MOTS with respect to $\nu$. Moreover, since $\partial_t =f^{-1} \nu$ with $f=|\nabla u|$, the first variation of null expansion formula \cite{AMS,G} gives
\begin{equation}\label{fjaijsroa}
0=\partial_t \theta_{+}=-\Delta_t f^{-1} +2\langle X,\nabla_t f^{-1}\rangle
+\left(K_t -\mu-J(\nu)-\frac{1}{2}|\chi^+|^2 +\mathrm{div}_t X-|X|^2\right)f^{-1},
\end{equation}
where $K_t$, $\Delta_t$, and $X=k(\nu,\cdot)$ are respectively the Gauss curvature, Laplace-Beltrami operator, and a 1-form on $\Sigma_t$. Multiplying by $f$ and integrating by parts, while utilizing the Gauss-Bonnet theorem and the vanishing of the null second fundamental forms as well as the vanishing of the sum of energy and momentum densities, produces
\begin{equation}\label{=-09}
0=-\int_{\Sigma_t}\left(f\Delta_t f^{-1} +2\langle X,\nabla_t \log f\rangle +|X|^2\right)dA=-\int_{\Sigma_t}|\nabla_t \log f +X|^2 dA.
\end{equation}
It follows that $X=-\nabla_t \log f$, and thus from \eqref{fjaijsroa} we find $K_t \equiv 0$ so that $(\Sigma_t,g_t)$ is a flat torus for all $t\in[0,1]$.

Consider now the case in which $k=-\lambda g$ for some $\lambda\in C^{\infty}(\Omega)$. Since the momentum density vanishes when evaluated on vector fields $Y$ tangential to $\Sigma_t$, we have
\begin{equation}
0=J(Y)=\mathrm{div}_g\left(k-(\mathrm{Tr}_g k)g\right)(Y)=2Y(\lambda),
\end{equation}
so that $\lambda$ is constant on $\Sigma_t$ and we may write $\lambda=\lambda(t)$. Next observe that the first equation of \eqref{equ:K=0} implies
\begin{equation}
Y(|\nabla u|)=\nabla^2 u\left( \frac{\nabla u}{|\nabla u|},Y \right)
=\lambda g(\nabla u,Y)=0,
\end{equation}
so that $|\nabla u|$ is a constant on $\Sigma_t$. Furthermore
\begin{equation}\label{fgkahjgowj}
\partial_t |\nabla u|=g\left( \nabla|\nabla u|, \frac{\nabla u}{|\nabla u|^2}\right)=\nabla^2 u\left( \frac{\nabla u}{|\nabla u|},\frac{\nabla u}{|\nabla u|^2} \right)=\lambda g(\nu,\nu)=\lambda.
\end{equation}
Define a new radial coordinate $s=s(t)$ such that $ds=|\nabla u|^{-1} dt$ and $s(0)=0$. Then \eqref{fgkahjgowj} shows $\partial_{s}\log f=\lambda$, and with the help of \eqref{gajaiogj} we find
\begin{equation}
\frac{1}{2}\partial_{s}g_{s}=II_{s}=-k\big|_{\Sigma_{s}}
=\lambda g_{s}
=\left(\partial_{s}\log f\right) g_{s}
\end{equation}
so that $g_{s} =f(s)^2 \hat{g}$ for some flat metric $\hat{g}$ on $T^2$. It follows from \eqref{matricllaf} that the desired form of the metric is achieved
\begin{equation}
g=ds^2 +f(s)^2 \hat{g}.
\end{equation}

\section{Proof of Theorems \ref{thm:PMT}-\ref{energylb}}
\label{sec6}
\setcounter{equation}{0}
\setcounter{section}{6}

Let $(M,g,k)$ be a smooth orientable 3-dimensional asymptotically hyperboloidal initial data set with toroidal infinity. Suppose that the boundary may be decomposed into a disjoint union
\begin{equation}\label{afksgia}
\partial M=\big(\sqcup_{i=2}^m\partial^{+}_{i}M\big)\sqcup \big(\sqcup_{i=1}^\ell\partial^-_{i}M\big),
\end{equation}
where the connected components are organized so that $\theta_{+}\left(\partial_i^+ M\right)\geq 0$ with respect to the outer normal, and $\theta_{+}\left(\partial_i^- M\right)\leq 0$ with respect to the inner normal; this unit normal having the stated orientations will be denoted by $\upsilon$. Moreover, assume that $\partial^+_{i}M$ is of zero genus for $i=2,\ldots,m$, that $M$ satisfies the homotopy condition with respect to conformal infinity, and $H_2(M,\partial M;\mathbb{Z})=0$. Note that the integers $m,\ell\geq 1$, with $m=1$ signifying that the first set of components in \eqref{afksgia} is empty. Thus, the boundary is nonempty and weakly trapped with respect to the unit normal pointing towards the asymptotic end, having at least one weakly outer trapped component and with each weakly inner trapped component of genus zero.

For each $r>1$, let $T_r$ denote the constant radial coordinate torus in the asymptotic end, and set $M_r$ to be the bounded component of $M\setminus T_r$. Its boundary is then given by
\begin{equation}
\partial M_r=\partial_{1}^{+}M_r \sqcup \big(\sqcup_{i=2}^m\partial^{+}_{i}M_r\big)\sqcup \big(\sqcup_{i=1}^\ell\partial^-_{i}M_r\big),
\end{equation}
where $\partial_{1}^{+}M_r = T_r$, $\partial_i^+ M_r=\partial_i^+ M$ for $i\neq 1$, and $\partial_i^- M_r =\partial_i^- M$ for all $i\in \llbracket 1,\ell \rrbracket$. By Proposition \ref{lem:separable}, it may be assumed that the ordering of $\partial_i^- M$ has been arranged so that $\partial_1^- M_r$ is not separable from $\partial_1^+ M_r$ by a 2-sphere. Furthermore, let $w\in C^{2,\alpha}(\bar{M}_r)$ be the unique solution of \eqref{shf1}, \eqref{equ:boundarycondition1}, and \eqref{equ:boundarycondition2} given by Proposition \ref{lem:specialbd-app} with $w=0$ on $\partial_1^- M_r$ and $w=1$ on $\partial_1^+ M_r$. Define $u_r =r w$, and observe that this function satisfies
\begin{equation}\label{shf11}
\Delta u_r+\left(\mathrm{Tr}_g k\right)|\nabla u_r|=0\quad\quad\text{ on }\quad\quad M_r,
\end{equation}
\begin{equation}\label{equ:boundarycondition11}
\left\{\begin{array}{cc}
u_r=r &\ \textup{on}\ \partial_{1}^+ M_r,\\
u_r=0 &\ \textup{on}\ \partial_{1}^- M_r,\\
u_r=r a^+_i(r) &\ \textup{on}\ \partial^+_{i} M_r,\ i\in\llbracket 2,m \rrbracket,\\
u_r=r a^-_i(r) &\ \textup{on}\ \partial^-_{i} M_r,\ i\in\llbracket 2,\ell \rrbracket,
\end{array} \right.
\end{equation}
and
\begin{equation}\label{equ:boundarycondition21}
\displaystyle\min_{\partial^+_{i} M_r}\partial_{\upsilon}u_r=0 \text{ }\text{ for }\text{ } i\in\llbracket 2,m \rrbracket,\quad\quad\quad
\displaystyle\min_{\partial^-_{i} M_r}\partial_{\upsilon}u_r=0  \text{ }\text{ for }\text{ } i\in\llbracket 2,\ell \rrbracket,
\end{equation}
for some constants $a_{i}^{\pm}(r)\in (0,1)$. We may then apply Proposition \ref{pro:div} and the discussion of Section \ref{sec3}, in particular \eqref{aifjq0gn}, to find
\begin{align}\label{aifjq0gn2856635r}
\begin{split}
&\int_{M_r}\left(\frac{1}{2}\frac{|\bar{\nabla}^2 u_r|^2}{|\nabla u_r|}+\mu|\nabla u_r|+J(\nabla u_r)\right)dV
-\sum_{i=2}^{m}\int_{\partial_i^+ M_r}\theta_{-} |\nabla u_r|dA
-\sum_{i=1}^{\ell}\int_{\partial_i^- M_r}\theta_{+} |\nabla u_r|dA\\
\leq &2\pi\int_{0}^{r}\chi(\Sigma_t^r)dt-\int_{\partial_1^+ M_r}\theta_+ |\nabla u_r|dA,
\end{split}
\end{align}
where the null expansions are computed with respect to the unit normal pointing towards the asymptotic end and $\Sigma_t^r$ denotes the $t$-level set of $u_r$.

We will show that the integral over $\partial_1^+ M_r$ converges to a positive multiple of the total energy, as $r\rightarrow\infty$. To accomplish this, we will first estimate the asymptotics for $u_r$ and its derivatives. In the next result, suitable barrier functions are constructed showing that the leading term in the expansion for the solutions is the coordinate function $r$.

\begin{lemma}\label{lem:barrier}
There exist constants $C>0$ and $r_*>1$, such that for all $\rho> r_*$ we have
\begin{equation}\label{equ:C0}
|u_{\rho}-r|\leq C \quad\quad \text{ on }\quad\quad  M_{\rho}\setminus M_1.
\end{equation}
\end{lemma}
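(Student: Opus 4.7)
The plan is a barrier argument built around the observation that $r$ is an exact spacetime harmonic function for the model data $(b,-b)$: a direct calculation yields $|\nabla_b r|_b = r$ and $\Delta_b r = 3r$, so
\begin{equation*}
L_{\mathrm{model}}[r] \coloneqq \Delta_b r + \mathrm{Tr}_b(-b)\,|\nabla_b r|_b = 3r - 3r = 0.
\end{equation*}
Using the decay assumptions \eqref{alk3nhapgaj} on $Q_g$ and $Q_k$, together with the fact that $\mathbf{m}$ and $\mathbf{p}$ live only on $T^2$, I would verify by direct expansion that the nonlinear operator $L[u] \coloneqq \Delta_g u + (\mathrm{Tr}_g k)|\nabla_g u|_g$ applied to the coordinate function satisfies an estimate of the form $|L[r]| \leq C r^{-2}$ on the asymptotic region $\{r \geq r_*\}$; the dominant contributions come from $\mathrm{Tr}_g k + 3 = O(r^{-3})$ and $\Delta_g r - 3r = O(r^{-2})$ in combination with $|\nabla_g r|_g = r + o(r^{-2})$.

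Next I would construct one-parameter families of local super- and subsolutions. A short computation in the model shows that for any smooth radial $f = f(r)$,
\begin{equation*}
L_{\mathrm{model}}[r + f(r)] = r^2 f''(r),
\end{equation*}
so with $f(r) = C_1 - C_2/r$ for constants $C_1,C_2>0$ one gets $L_{\mathrm{model}}[v^+] = -2C_2/r$, where $v^+ \coloneqq r + C_1 - C_2/r$. Because the perturbation from the model only contributes terms of order $O(r^{-2})$, the strictly negative principal term of order $r^{-1}$ dominates for $r$ large, so $L[v^+] \leq 0$ on $\{r \geq r_*\}\cap M_\rho$ for $r_*$ sufficiently large and every $\rho > r_*$. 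By symmetry $v^- \coloneqq r - C_1 + C_2/r$ satisfies $L[v^-] \geq 0$ on the same region, and $|v^\pm - r| \leq C_1 + C_2/r_*$ throughout.

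The comparison is then carried out by the linearization trick already used around equation \eqref{alkjfja}: writing $w = v^+ - u_\rho$, the inequality $L[v^+]\leq L[u_\rho] = 0$ rewrites as a linear elliptic inequality $\Delta w + b^i\partial_i w \leq 0$ with bounded drift on the subset where $|\nabla v^+| + |\nabla u_\rho|>0$, which holds throughout $\{r\geq r_*\}$ because $|\nabla v^+|_g \geq r/2$ there. The weak maximum principle then reduces matters to a boundary comparison. On $T_\rho$, taking $C_1 \geq C_2/r_*$ gives $v^-|_{T_\rho} \leq \rho = u_\rho \leq v^+|_{T_\rho}$, so the outer boundary is handled.

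The hard part, which I expect to be the main obstacle, is controlling $u_\rho$ uniformly in $\rho$ on the inner boundary of the comparison annulus, since a priori only $0 \leq u_\rho \leq \rho$ is available from the global maximum principle. To close the argument I would extend $v^\pm$ globally to $M_\rho$: because any constant $c$ satisfies $L[c] = 0$ and is therefore simultaneously super- and subsolution, the pointwise maximum (in the viscosity sense) of $v^+$ with a sufficiently large constant $A$ is again a supersolution on all of $M_\rho$. Choosing $A$ to dominate $u_\rho$ on $\partial M$---where $u_\rho$ equals $0$ on $\partial_1^- M$ and the constants $\rho\, a_i^\pm(\rho)$ on the other components---yields the global comparison. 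The remaining delicate point is to show that these boundary values $\rho\, a_i^\pm(\rho)$ stay uniformly bounded in $\rho$, which I anticipate following from the auxiliary condition \eqref{equ:boundarycondition21} together with Hopf-type estimates; this is where the bulk of the technical work is expected to lie.
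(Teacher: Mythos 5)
Your exterior radial barrier is sound, and in one respect simpler than the paper's: writing $v^+ = r + C_1 - C_2/r$, the model computation $L_{\mathrm{model}}[r+f] = r^2 f''$ gives $L_{\mathrm{model}}[v^+] = -2C_2/r$, which is of strictly lower order than the $O(r^{-2})$ errors coming from the asymptotics \eqref{alk3nhapgaj}, so any $C_2>0$ works for $r$ large (the paper instead uses a $\lambda r^{-2}$ correction and must tune $\lambda$ against the mass aspect terms in \eqref{akfnbsaohgj}). The genuine gap is the inner boundary, exactly where you say the bulk of the work lies, and the resolution you sketch does not close it. Extending by a viscosity max with a constant $A$ only gives a comparison if $A \geq u_\rho$ on all of $\partial M$, and on the components $\partial_i^\pm M$ for $i\geq 2$ this is $A \geq \rho\, a_i^\pm(\rho)$. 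The auxiliary condition \eqref{equ:boundarycondition21} says only that $\partial_\upsilon u_\rho$ vanishes somewhere on each such component; by itself this does not bound the Dirichlet value $\rho\, a_i^\pm(\rho)$ uniformly in $\rho$, and the natural comparison one would reach for to get such a bound is precisely the estimate being proved, making the proposal circular as stated.

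The paper avoids this by a two-step argument that is structurally different from a single global barrier comparison. First, the barrier $z^+$ is not extended by a constant but by gluing the exterior radial function to $c_0 w^+_{r_0}$ on the fixed core $M_{r_0}$, where $w^+_{r_0}$ is a spacetime harmonic function with data $1$ on $T_{r_0}$ and on all of $\partial_i^\pm M$ for $i\geq 2$ and data $0$ on $\partial^-_1 M$; the Hopf lemma and a large enough $c_0$ make the normal-derivative jump at $T_{r_0}$ go the right way for a weak supersolution, and the resulting barrier is bounded independently of $\rho$ on the other boundary components by construction. Second, the comparison with $u_\rho$ is not carried out directly: the weak maximum principle gives $z^+ \geq \tilde{u}_\rho$ for an auxiliary spacetime harmonic function $\tilde{u}_\rho$ with data $\rho$ on $T_\rho$, $0$ on $\partial^-_1 M$, and $z^+$ on the other components; since $z^+$ touches $\tilde{u}_\rho$ from above along those components, $\mathbf{n}(\tilde{u}_\rho) \geq \mathbf{n}(z^+) > 0$ there, and Proposition \ref{lem:specialbd-app} (applied to $\rho^{-1}\tilde u_\rho$ and $\rho^{-1}u_\rho$) then gives $u_\rho \leq \tilde{u}_\rho \leq z^+$. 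The free-boundary comparison embedded in Proposition \ref{lem:specialbd-app} is what actually substitutes for the missing uniform bound on $\rho\, a_i^\pm(\rho)$, and without it or a comparable device your argument does not close.
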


\begin{proof}
We start by constructing an upper barrier. Let $r_0>1$, $\lambda\in\mathbb{R}$, and $c_0>0$ be constants to be determined, and let $\rho>r_0$. Consider the spacetime harmonic function $w_{r_0}^+\in C^{2,\alpha}(\bar{M}_{r_0})$  with boundary conditions $w_{r_0}^+=1$ on $T_{r_0}\sqcup\left(\sqcup_{i=2}^m \partial^+_i M_{\rho} \right) \sqcup\left(\sqcup_{i=2}^\ell \partial^-_i M_{\rho}\right)$ and $w_{r_0}^+=0$ on $\partial^-_1 M_{\rho}$, that is guaranteed by the discussion in Section \ref{sec4}. By the Hopf lemma
\begin{equation}\label{equ:w+}
\mathbf{n}\left(w_{r_0}^+ \right)> 0\quad\quad \text{ on }\quad\quad  T_{r_0}\sqcup \left(\sqcup_{i=2}^m \partial^+_i M_{\rho} \right) \sqcup\left(\sqcup_{i=2}^\ell \partial^-_i M_{\rho} \right),
\end{equation}
where $\mathbf{n}$ denotes the unit outer normal. Define
\begin{equation}\label{definitionssang}
{z}^+\coloneqq \left\{ \begin{array}{cc}
c_0 w_{r_0}^+  & \text{on } M_{r_0},\\
r+(c_0-r_0-\lambda r_0^{-2})+ \lambda r^{-2} & \text{on } M\setminus M_{r_0}.
\end{array} \right.
\end{equation}
Clearly $z^+$ is continuous on $M$, and is $C^{2,\alpha}$ smooth away from $T_{r_0}$.

We now show that $z^+$ is a super solution on $M\setminus M_{r_0}$ if $\lambda$ and $r_0$ are chosen appropriately. Observe that \eqref{asymptotics} yields
\begin{equation}
\det g= r^2 \det \hat{g} \left(1+ (\mathrm{Tr}_{\hat{g}}\mathbf{m})r^{-3}+o(r^{-3})\right),\quad\quad\quad
g^{rr}=r^2 \left(1+o(r^{-3})\right),
\end{equation}
so that in the exterior region
\begin{equation}
\Delta z^+ =\frac{1}{\sqrt{\det g}}\partial_r \left( g^{rr} \sqrt{\det g} \partial_r z_+\right) +o(r^{-2})
=3r\left(1-\frac{1}{2}(\mathrm{Tr}_{\hat{g}}\mathbf{m})r^{-3}+o(r^{-3})\right).
\end{equation}
Moreover
\begin{equation}
\mathrm{Tr}_{g}k =-3 +(\mathrm{Tr}_{\hat{g}}\mathbf{p})r^{-3}+o(r^{-3}), \quad\quad\quad
|\nabla z^+|^2=g^{rr}\left(\partial_r z^+ \right)^2
=r^2\left(1-2\lambda r^{-3}+o(r^{-3})\right),
\end{equation}
so that
\begin{equation}
\left(\mathrm{Tr}_{g}k \right)|\nabla z^+|
=r\left(-3 +(6\lambda+\mathrm{Tr}_{\hat{g}}\mathbf{p})r^{-3}+o(r^{-3})\right).
\end{equation}
It follows that
\begin{equation}\label{akfnbsaohgj}
\Delta z^+ +\left(\mathrm{Tr}_{g}k \right)|\nabla z^+|=\left(6\lambda-\frac{3}{2}\mathrm{Tr}_{\hat{g}}\mathbf{m}
+\mathrm{Tr}_{\hat{g}}\mathbf{p}\right)r^{-2} +o(r^{-2})\leq 0
\end{equation}
on $M\setminus M_{r_0}$, if $\lambda$ is chosen so that the term in parentheses within \eqref{akfnbsaohgj} is less than -1 and $r_0$ is chosen sufficiently large.

The function $z^+$ is a super solution for the spacetime harmonic equation on $M_{r_0}$ and $M\setminus M_{r_0}$ separately. Moreover, it is a weak super solution on $M$ if $c_0$ is chosen appropriately. To see this, note that with the help of \eqref{equ:w+} we may choose $c_0>0$ large enough so that on $T_{r_0}$ the following inequality holds
\begin{equation}\label{joint}
c_0 \mathbf{n}\left(w_{r_0}^+\right)\geq \mathbf{n}\left(r + \lambda r^{-2}\right)=r_0^2\left(1-2\lambda r_0^{-3}+o(r_{0}^{-3})\right).
\end{equation}
Consider now the spacetime harmonic function $\tilde{u}_{\rho}\in C^{2,\alpha}(\overline{M}_{\rho})$ satisfying the boundary conditions
\begin{equation}
\left\{\begin{array}{cc}
\tilde{u}_\rho=\rho,\ &\text{on } \partial^+_1 M_\rho,\\
\tilde{u}_\rho=0,\ &\text{on } \partial^-_1 M_{\rho},\\
\tilde{u}_\rho=z^+,\ &\text{on } \left(\sqcup_{i=2}^m \partial^+_i M_{\rho} \right) \sqcup\left(\sqcup_{i=2}^\ell \partial^-_i M_{\rho} \right).
\end{array} \right.
\end{equation} 
Observe that in light of \eqref{akfnbsaohgj} and the definition \eqref{definitionssang}, the difference $z^+ -\tilde{u}_{\rho}$ is a super solution for a linear elliptic equation with bounded coefficients, namely
\begin{equation}\label{linearopelfk}
L(z^+ -\tilde{u}_{\rho}):=\Delta(z^+ -\tilde{u}_{\rho})
+\underbrace{\left(\mathrm{Tr}_g k\right)\frac{\nabla(z^+ +\tilde{u}_{\rho})}{|\nabla z^+|+|\nabla \tilde{u}_{\rho}|}}_{\vec{\mathcal{K}}}\cdot\nabla(z^+ -\tilde{u}_{\rho})\leq 0
\end{equation}
on $M_{r_0}$ and $M_{\rho}\setminus M_{r_0}$ separately. It follows that for nonnegative test functions $\varphi\in C^{\infty}_{c}(M_\rho)$ we have
\begin{align}
\begin{split}
0\leq &-\int_{M_{r_0}}\varphi L(z^+-\tilde{u}_{\rho})dV\\
=&\int_{M_{r_0}}\left(\nabla\varphi\cdot\nabla(z^+ -\tilde{u}_{\rho})
-\varphi\vec{\mathcal{K}}\cdot\nabla(z^+ -\tilde{u}_{\rho})\right)dV
-\int_{T_{r_0}}\varphi\mathbf{n}\left(c_0 w_{r_0}^+ -\tilde{u}_{\rho}\right)dA,
\end{split}
\end{align}
and
\begin{align}
\begin{split}
0\leq&-\int_{M_{\rho}\setminus M_{r_0}}\varphi L(z^+-\tilde{u}_{\rho})dV\\
=&\int_{M_{\rho}\setminus M_{r_0}}\left(\nabla\varphi\cdot\nabla(z^+ -\tilde{u}_{\rho})
-\varphi\vec{\mathcal{K}}\cdot\nabla(z^+ -\tilde{u}_{\rho})\right)dV
+\int_{T_{r_0}}\varphi\mathbf{n}\left(r+\lambda r^{-2} -\tilde{u}_{\rho}\right)dA,
\end{split}
\end{align}
so that summing these two inequalities produces
\begin{equation}
\int_{M_\rho}\left(\nabla\varphi\cdot\nabla(z^+ -\tilde{u}_{\rho})
-\varphi\vec{\mathcal{K}}\cdot\nabla(z^+ -\tilde{u}_{\rho})\right)dV
\geq\int_{T_{r_0}}\varphi \mathbf{n}\left(c_0 w_{r_0}^+ -r-\lambda r^{-2}\right)dA\geq 0.
\end{equation}
Thus, the weak maximum principle \cite[Theorem 8.1]{GT} implies that
\begin{equation}
\inf_{M_\rho}\left(z^+ -\tilde{u}_{\rho}\right)\geq
\inf_{\partial M_\rho}\left(z^+ -\tilde{u}_{\rho}\right)\geq 0,
\end{equation}
where in the last inequality we may ensure that $z^+\geq \tilde{u}_{\rho}$ on $\partial_1^+ M_{\rho}$ by choosing $c_0$ larger (dependent only on $r_0$ and $\lambda$) if necessary.

These estimates may be translated into bounds for $u_{\rho}$ in the following way. Since $z^+\geq \tilde{u}_{\rho}$ on $M_{\rho}$ we find that
$\mathbf{n}\left(\tilde{u}_\rho \right)\geq \mathbf{n}\left(z^+ \right)>0$ on $\left(\sqcup_{i=2}^m \partial^+_i M_{\rho} \right) \sqcup\left(\sqcup_{i=2}^\ell \partial^-_i M_{\rho} \right)$, where \eqref{equ:w+} was also used. Proposition \ref{lem:specialbd-app} may now be applied to $\rho^{-1}\tilde{u}_{\rho}$ and $\rho^{-1}u_{\rho}$ to find that $\tilde{u}_{\rho}\geq u_{\rho}$ on $M_{\rho}$. Hence $z^+\geq u_{\rho}$ on $M_{\rho}$ for $\rho> r_0$.

The construction of a lower barrier is analogous, so we will only give an outline. Let $r_1>1$, $\varsigma\in\mathbb{R}$, and $c_1>0$ be constants to be determined. Consider the spacetime harmonic function $w_{r_1}^-\in C^{2,\alpha}(\overline{M}_{r_1})$ with boundary conditions $w_{r_1}^-=-1$ on $T_{r_1}$ and $w_{r_1}^-=0$ on $\left(\sqcup_{i=2}^m \partial^+_i M_{\rho} \right) \sqcup\left(\sqcup_{i=1}^\ell \partial^-_i M_{\rho}\right)$. Use this function to define
\begin{equation}
{z}^-\coloneqq \left\{ \begin{array}{cc}
c_1 w_{r_1}^-  & \text{on } M_{r_1},\\
r-(c_1+r_1+\varsigma r_1^{-2})+ \varsigma r^{-2} & \text{on } M\setminus M_{r_1}.
\end{array} \right.
\end{equation}
The calculation \eqref{akfnbsaohgj} shows that if $\varsigma$ is chosen to ensure $6\varsigma-\frac{3}{2}\mathrm{Tr}_{\hat{g}}\mathbf{m}
+\mathrm{Tr}_{\hat{g}}\mathbf{p}>1$, then $r_1$ may be chosen sufficiently large to guarantee that $z^-$ is a sub solution of the spacetime harmonic equation on $M\setminus M_{r_1}$. The Hopf lemma, together with an appropriately large choice for $c_1$, shows that $z^-$ is a weak sub solution on $M$. A comparison argument then yields $z^- \leq u_{\rho}$ on $M_{\rho}$ for $\rho> r_1$. By setting $r_* =\max\{r_0 ,r_1\}$, we then have $z^-\leq u_{\rho}\leq z^+$ on $M_{\rho}$ for all $\rho> r_*$. The desired conclusion now follows.
\end{proof}

The pointwise estimates for $u_\rho$ may be parlayed into uniform gradient bounds in the asymptotic end with standard $L^p$-elliptic estimates.
These bounds, presented in the next result, will be sufficient to show convergence of the boundary integral on the right-hand side of \eqref{aifjq0gn2856635r} to the total energy.

\begin{lemma}\label{lem:grad}
Let $r_*>1$ be as in Lemma \ref{lem:barrier}. There exists a constant $C>0$ such that for all $\rho> r_*$ we have
\begin{equation}\label{equ:C0}
|\nabla u_{\rho}-\nabla r|\leq C \quad\quad \text{ on }\quad\quad  M_{\rho}\setminus M_1.
\end{equation}
\end{lemma}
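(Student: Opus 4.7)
The plan is to recast the spacetime harmonic equation as a linear elliptic PDE for the difference $v_\rho := u_\rho - r$, whose coefficients and source term are bounded uniformly in $\rho$, and then to apply standard interior/boundary $L^p$ elliptic estimates. The pointwise bound from Lemma~\ref{lem:barrier} then promotes to a gradient bound via Sobolev embedding.

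First, subtracting $\Delta r + (\mathrm{Tr}_g k)|\nabla r|$ from \eqref{shf11} and applying the algebraic identity
\begin{equation*}
|\nabla u_\rho| - |\nabla r| = \frac{\langle \nabla u_\rho + \nabla r, \nabla v_\rho \rangle_g}{|\nabla u_\rho| + |\nabla r|},
\end{equation*}
I would obtain the linear equation
\begin{equation*}
\Delta v_\rho + \vec{b}\cdot \nabla v_\rho = f\quad \text{on}\quad M_\rho \setminus M_1,
\end{equation*}
where $\vec{b} := (\mathrm{Tr}_g k)\dfrac{\nabla u_\rho + \nabla r}{|\nabla u_\rho| + |\nabla r|}$ and $f := -\bigl(\Delta r + (\mathrm{Tr}_g k)|\nabla r|\bigr)$. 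The triangle inequality gives $|\vec{b}|_g \le |\mathrm{Tr}_g k|$, so $\vec{b}$ is bounded in $L^\infty$ independently of $\rho$. Specializing the computation leading to \eqref{akfnbsaohgj} to the choice $\lambda = 0$ shows that $f = O(r^{-2})$ as $r \to \infty$, and hence $f$ is bounded on all of $M \setminus M_1$. Note also that $v_\rho \equiv 0$ on the outer boundary $T_\rho$, since both $u_\rho$ and $r$ equal $\rho$ there.

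Second, I would apply local elliptic $L^p$ estimates on a covering of $M_\rho \setminus M_1$ by geodesic $g$-balls of a fixed radius $\delta$, along with boundary half-balls adapted to $T_\rho$. The asymptotics \eqref{asymptotics} guarantee that the end has bounded geometry, so in normal coordinates on each such ball the metric $g$ is uniformly comparable to the Euclidean metric and the intrinsic/extrinsic geometry of $T_\rho$ is uniformly controlled (all independently of $\rho$). The Calder\'on-Zygmund estimate then yields
\begin{equation*}
\|v_\rho\|_{W^{2,p}(B_{\delta/2}(p))} \le C \bigl( \|v_\rho\|_{L^p(B_\delta(p))} + \|f\|_{L^p(B_\delta(p))} \bigr),
\end{equation*}
with $C$ depending only on $(g,k)$ and the $L^\infty$ norm of $\vec{b}$. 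The right-hand side is uniformly bounded thanks to Lemma~\ref{lem:barrier} and the bound on $f$; choosing $p>3$ and invoking the Sobolev embedding $W^{2,p} \hookrightarrow C^1$ delivers the desired uniform bound on $|\nabla v_\rho|_g$.

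The main technical obstacle I anticipate is establishing uniformity as $\rho \to \infty$ at the outer boundary $T_\rho$, where a naive bound would blur the dependence on $\rho$ in the elliptic constants. This is circumvented by exploiting the fact that $v_\rho$ has trivial Dirichlet data on $T_\rho$, so that the boundary $L^p$ theory for the linearization applies, and the asymptotic translation-invariance of the model metric $b$ in the radial direction (with decaying perturbation $Q_g$) ensures the coordinate charts adapted to $T_\rho$ have geometry bounded uniformly in $\rho$. Any interior region containing $T_1$ is handled by extending the charts slightly into $M_1$, where $u_\rho$ is smooth, so no extra work is required there.
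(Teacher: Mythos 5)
Your proposal is correct and follows essentially the same route as the paper: recast the equation for $v_\rho = u_\rho - r$ as a linear elliptic PDE with uniformly bounded first-order coefficient and uniformly bounded source (via the expansion underlying \eqref{akfnbsaohgj}), exploit $v_\rho = 0$ on $T_\rho$ to invoke boundary $L^p$ estimates on geodesic balls of fixed radius, and conclude by Sobolev embedding with $p>3$ together with the $L^\infty$ bound from Lemma~\ref{lem:barrier}. The uniformity of the elliptic constants in $\rho$, which you flag as the main technical point, is addressed in the paper in the same way, by appealing to the bounded geometry of the asymptotically locally hyperbolic end.
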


\begin{proof}
Let $\rho> r_*$, and set $v_\rho=u_\rho-r$. As in \eqref{linearopelfk} we find that $v_\rho$ satisfies the equation
\begin{equation}
\Delta v_\rho+\left(\text{Tr}_g k\right)\frac{\nabla(u_{\rho}+r)}{|\nabla u_\rho|+|\nabla r|}\cdot\nabla v_\rho=-\Delta r-\left(\text{Tr}_g k\right)|\nabla r|=:F.
\end{equation}
Note that the first order coefficients are uniformly bounded. Take a point
$x_0\in \partial^+_1 M_\rho$, and let $B_\epsilon$ and $B_{\epsilon/2}$ be geodesic balls centered at $x_0$ with radii $\epsilon$ and $\epsilon/2$, respectively. We fix $\epsilon>0$ so that it is less than the injectivity radius for any point $x\in M\setminus M_{r_*}$. For $1<p<\infty$ the boundary $L^p$-estimates \cite[Theorem 9.13]{GT}, with $v_\rho =0$ on $\partial_1^+ M_{\rho}$, yield
\begin{equation}
\| v_\rho\|_{W^{2,p}(B_{\epsilon/2}\cap M_\rho )}\leq C_0 \left(\|F\|_{L^p(B_\epsilon\cap M_\rho)}+\|v_\rho \|_{L^p(B_\epsilon\cap M_\rho )}\right).
\end{equation}
Since $g$ is asymptotically locally hyperbolic, the constant $C_0$ is uniform over all $x_0\in \partial_1^+ M_{\rho}$ and all $\rho> r_*$. Furthermore, a calculation similar to \eqref{akfnbsaohgj} shows that
\begin{equation}
\Delta r +\left(\mathrm{Tr}_{g}k \right)|\nabla r|=\left(\mathrm{Tr}_{\hat{g}}\mathbf{p}
-\frac{3}{2}\mathrm{Tr}_{\hat{g}}\mathbf{m}
\right)r^{-2} +o(r^{-2}),
\end{equation}
and therefore $F$ is uniformly bounded in the asymptotic end. Moreover, by Lemma~\ref{lem:barrier} we have that $v_{\rho}$ is uniformly bounded on $M_{\rho}\setminus M_1$ independent of $\rho$. Thus, with the aid of Sobolev embedding (and choosing $p>3$) there is a uniform constant $C$ such that
\begin{equation}
\|v_\rho \|_{C^{1,\beta}(B_{\epsilon/2}\cap M_\rho)}\leq C_1\| v_\rho \|_{W^{2,p}(B_{\epsilon/2}\cap M_\rho)}\leq C,
\end{equation}
where $\beta=1-\tfrac{3}{p}$. The interior $L^p$-estimates may be used to obtain the same conclusion in balls away from the boundary. The desired result now follows.
\end{proof}

\begin{proof}[Proof of Theorem \ref{thm:PMT}]
Consider the inequality \eqref{aifjq0gn2856635r}. According to the hypotheses of the theorem, the left-hand side is nonnegative. Moreover, the assumptions imply that $M$ satisfies the homotopy condition with respect to some coordinate torus $T_{\bar{r}}$ in the asymptotic end. Since the entire asymptotic end is homotopy equivalent to $T_{\bar{r}}$, we find that $M_r$ satisfies the homotopy condition with respect to $\partial_1^+ M_r$. Therefore, the arguments of Section \ref{sec5} apply to show that the level set Euler characteristics satisfy $\chi(\Sigma_t^r)\leq 0$ for all regular values $t\in[0,r]$. To establish that the total energy $E$ is nonnegative, it then suffices to show that the boundary integral on the right-hand side of \eqref{aifjq0gn2856635r} converges to a positive multiple of this quantity, namely
\begin{equation}\label{fjkah29tgj}
E=\lim_{r\to\infty}-\frac{2}{|T^2|}\int_{\partial^+_1 M_r} \theta_{+}|\nabla u_r| dA.
\end{equation}
Observe that the asymptotics \eqref{asymptotics} imply that the unit outer normal to $\partial_1^+ M_r$ satisfies
\begin{equation}
\mathbf{n}=\frac{\nabla r}{|\nabla r|}=\frac{g^{rr}\partial_r +g^{rl}\partial_l}{|\nabla r|}=r\left(1+o(r^{-3})\right)\partial_r +\Sigma_{l=1}^{2}o(r^{-4})\partial_l,
\end{equation}
where $l=1,2$ denote directions tangential to the torus cross-sections. It follows that the mean curvature with respect to $\mathbf{n}$ has the expansion
\begin{align}\label{Hk1}
\begin{split}
H=&\frac{1}{2}\left(r^{-2} \hat{g}^{jl}-r^{-5}\textbf{m}^{jl}+o(r^{-5})\right)\mathbf{n}\left(r^2 \hat{g}_{jl}+r^{-1}\textbf{m}_{jl}+o(r^{-1})\right)\\
=&2-\frac{3}{2}\left(\text{Tr}_{\hat{g}}\textbf{m}\right)r^{-3}+o(r^{-3}),
\end{split}
\end{align}
where $\mathbf{m}^{jl}$ indicates indices raised with the metric $\hat{g}$.
Similarly we have
\begin{align}\label{Hk2}
\begin{split}
\text{Tr}_{\partial_1^+ M_r}k =&\left(r^{-2} \hat{g}^{jl}-r^{-5}\textbf{m}^{jl}+o(r^{-5})\right)\left(-r^2 \hat{g}_{jl}+r^{-1}\textbf{p}_{jl}-r^{-1}\textbf{m}_{jl}\right)\\
=&-2+\left(\text{Tr}_{\hat{g}}\textbf{p}\right)r^{-3}+o(r^{-3}),
\end{split}
\end{align}
so that
\begin{equation}\label{akfkakdllg}
\theta_+ =H+\text{Tr}_{\partial_1^+ M_r}k
=\left(\text{Tr}_{\hat{g}}\textbf{p}-\frac{3}{2}\text{Tr}_{\hat{g}}\textbf{m}\right)r^{-3}
+o(r^{-3}).
\end{equation}
Furthermore the area form is a direct calculation, and Lemma~\ref{lem:grad} yields the asymptotics for the modulus of the derivatives of the spacetime harmonic function
\begin{equation}\label{Hk3}
dA=r^2 \left(1+o(r^{-3})\right)dA_{\hat{g}},\quad\quad\quad\quad
|\nabla u_{r}|=|\nabla r|+O(1)=r\left(1+o(1)\right).
\end{equation}
Therefore, combining \eqref{akfkakdllg} and \eqref{Hk3} produces \eqref{fjkah29tgj}.
\end{proof}

\begin{proof}[Proof of Theorem~\ref{thm:rigidity}]
We will first establish an energy lower bound by taking a limit of inequality \eqref{aifjq0gn2856635r}. To accomplish this, observe that
by Lemma~\ref{lem:barrier}, the functions $u_r$ are locally uniformly bounded. Standard elliptic estimates then show that derivatives of $u_r$ are locally uniformly bounded in $C^{2,\alpha}$, for any $\alpha\in (0,1)$. Therefore a diagonal argument implies that there is an increasing sequence $r_{\mathrm{j}}\rightarrow\infty$, such that $\{u_{r_{\mathrm{j}}}\}$ converges in $C^{2,\alpha}$ on compact subsets to a spacetime harmonic function $u\in C^{2,\alpha}(M)$.

Consider now the limit of \eqref{aifjq0gn2856635r} as $r_{\mathrm{j}}\rightarrow\infty$. Due to the regular convergence of the spacetime harmonic functions, it is clear that the limit may be passed under the boundary integrals on the left-hand side, and the same holds for the bulk integrals over fixed compact subsets, except possibly the first involving the Hessian. To deal with the Hessian term, fix a compact set $\Omega\subset M$. For any $\varepsilon>0$, define $\Omega_{\varepsilon}=\{x\in \Omega\mid |\nabla u (x)|\geq \varepsilon\}$. Because $u_{r_{\mathrm{j}}}$ converges to $u$ in $C^{2,\alpha}(\Omega)$, for $\mathrm{j}$ large enough it holds that $|\nabla u_{r_{\mathrm{j}}}|\geq 2^{-1}\varepsilon$ on $\Omega_{\varepsilon}$, and therefore
\begin{equation}
\lim_{\mathrm{j}\rightarrow\infty}\frac{|\bar{\nabla}^2 u_{r_{\mathrm{j}}}|^2}{|\nabla u_{r_{\mathrm{j}}}|}(x)=\frac{|\bar{\nabla}^2 u|^2}{|\nabla u|}(x)\quad\quad\text{ for all } x\in \Omega_{\varepsilon}.
\end{equation}
Fatou's lemma then applies to yield
\begin{equation}
\liminf_{\mathrm{j}\to\infty} \int_{\Omega} \frac{|\bar{\nabla}^2 u_{r_{\mathrm{j}}}|^2}{|\nabla u_{r_{\mathrm{j}}}|}\, dV\geq  \liminf_{\mathrm{j}\to\infty} \int_{\Omega_{\varepsilon}} \frac{|\bar{\nabla}^2 u_{r_{\mathrm{j}}}|^2}{|\nabla u_{r_{\mathrm{j}}}|} dV\geq \int_{\Omega_{\varepsilon}} \frac{|\bar{\nabla}^2 u |^2}{|\nabla u|} dV,
\end{equation}
and by the monotone convergence theorem we may let $\varepsilon\rightarrow 0$
to obtain
\begin{equation}\label{alfkhgowpsg90}
\liminf_{\mathrm{j}\rightarrow\infty}\int_{\Omega}\frac{|\bar{\nabla}^2 u_{r_{\mathrm{j}}}|^2}{|\nabla u_{r_{\mathrm{j}}}|}dV\geq
\int_{\Omega}\frac{|\bar{\nabla}^2 u|^2}{|\nabla u|}dV.
\end{equation}
Therefore if $\Omega$ properly contains all boundary components of $M$, then utilizing the fact that all regular level sets satisfy the Euler characteristic estimate $\chi(\Sigma_t^{r_{\mathrm{j}}})\leq 0$, as well as \eqref{fjkah29tgj}, \eqref{alfkhgowpsg90} and the dominant energy condition, we find that taking the $\liminf$ of both sides in inequality \eqref{aifjq0gn2856635r} produces
\begin{align}\label{aogjnq0hbnn}
\begin{split}
E\geq & \frac{1}{|T^2|}\int_{\Omega}\left(\frac{|\bar{\nabla}^2 u|^2}{|\nabla u|}+2(\mu+J(\nu))|\nabla u|\right)dV\\
&
-\frac{2}{|T^2|}\sum_{i=2}^{m}\int_{\partial_i^+ M}\theta_{-} |\nabla u|dA
-\frac{2}{|T^2|}\sum_{i=1}^{\ell}\int_{\partial_i^- M}\theta_{+} |\nabla u|dA,
\end{split}
\end{align}
where $\nu=\nabla u/|\nabla u|$.

We are now in a position to establish the rigidity statement. The line of argument from this point is almost identical to the proof of Theorem \ref{thm:EGM}, and thus we will only give an outline here while emphasizing the differences. In particular when $E=0$, the lower bound \eqref{aogjnq0hbnn} for arbitrary $\Omega$ together with the strategy in Section \ref{sec5} establishes the following: $|\nabla u|>0$ on all of $M$, there is only one boundary component $\partial_1^- M$, $M$ is diffeomorphic to $[0,\infty)\times \Sigma$ for some orientable closed surface $\Sigma$, $\mu=|J|_g =-J(\nu)$, each level set $\Sigma_t=\{t\}\times\Sigma$ has vanishing null second fundamental form $\chi^+ =0$, and the metric may be expressed as
\begin{equation}
g=|\nabla u|^{-2}dt^2 +g_t.
\end{equation}
Next observe that \eqref{fjaijsroa}, \eqref{=-09}, and the Gauss-Bonnet theorem yield
\begin{equation}\label{19gns0-2hg}
0=2\pi \chi(\Sigma_t)-\int_{\Sigma_t}|\nabla_t \log f +X|^2 dA.
\end{equation}
Since $M$ satisfies the homotopy condition with respect to conformal infinity, Proposition \ref{lem:separable} may be applied to show that $\partial_1^- M$ cannot be separated from the asymptotic end by a 2-sphere. This implies that $\Sigma_t$ cannot be a 2-sphere. It follows that $\chi(\Sigma_t)\leq 0$ for all $t\geq 0$, and in fact by \eqref{19gns0-2hg} we must have $\chi(\Sigma_t)=0$ in addition to $X=-\nabla_t \log f$. From \eqref{fjaijsroa} we then find that $(\Sigma_t,g_t)$ is a flat torus for all $t\geq 0$. Finally, if $k=-g$ then the arguments in the last paragraph of Section \ref{sec5} show that
\begin{equation}
g=dt^2 +e^{2t} \hat{g}
\end{equation}
for some flat metric on $T^2$. Thus by changing radial coordinates $r=e^t$, we find that $(M,g)$ is isometric to the Kottler time slice $([1,\infty)\times T^2,b)$.
\end{proof}

\begin{proof}[Proof of Theorem~\ref{energylb}]
This result arises from an updated version of inequality \eqref{aogjnq0hbnn}.
Since the dominant energy condition is not assumed for the first portion of this theorem, \eqref{aogjnq0hbnn} should be amended in the terms involving $\mu$ and $J$. To accomplish this, observe that
\begin{equation}
(\mu+J(\nu))|\nabla u|\leq \left(|\nabla r|+O(1)\right)(\mu+|J|_g)\leq c r\left(\mu+|J|_g \right)\in L^1(M\setminus M_1)
\end{equation}
for some uniform constant $c$, where we have used Lemma \ref{lem:grad}.
Then by the dominated convergence theorem
\begin{equation}
\liminf_{\mathrm{j}\rightarrow\infty}\int_{M_{r_{\mathrm{j}}}}\left(\mu|\nabla u_{r_{\mathrm{j}}}|+J(\nabla u_{r_{\mathrm{j}}})\right)dV
=\int_{M}\left(\mu+J(\nu)\right)|\nabla u|dV.
\end{equation}
Thus, the portion of \eqref{aogjnq0hbnn} involving the energy/momentum densities should in this case be replaced by an integration over all of $M$.
Now for the Hessian term, we may take a sequence of $\Omega$ that exhaust $M$ to find
\begin{align}\label{aogjnq0hbnn1}
\begin{split}
E\geq & \frac{1}{|T^2|}\int_{M}\left(\frac{|\bar{\nabla}^2 u|^2}{|\nabla u|}+2(\mu+J(\nu))|\nabla u|\right)dV\\
&
-\frac{2}{|T^2|}\sum_{i=2}^{m}\int_{\partial_i^+ M}\theta_{-} |\nabla u|dA
-\frac{2}{|T^2|}\sum_{i=1}^{\ell}\int_{\partial_i^- M}\theta_{+} |\nabla u|dA.
\end{split}
\end{align}
The hypotheses of the theorem guarantee that the boundary integrals are nonnegative, and this yields the desired inequality \eqref{thm1.1}.

Consider now the case in which $k=-g$, the dominant energy condition holds, and the boundary is minimal $H=0$ instead of weakly trapped. In this situation, we relabel the boundary components of $M$ so that all are within the $\partial_i^- M$ designation. This changes the spacetime harmonic function boundary conditions according to \eqref{equ:boundarycondition11}, \eqref{equ:boundarycondition21}, and produces a version of \eqref{aogjnq0hbnn1} without terms involving $\partial_i^+ M$. Since $\theta_+ =H+\mathrm{Tr}_{\partial_i^- M}k=-2$ for $i\in \llbracket 1,\ell+m-1 \rrbracket$, and the Hopf lemma ensures that $|\nabla u|=\partial_{\upsilon} u>0$ on $\partial_1^- M$, it follows that
\begin{equation}
E\geq -\frac{2}{|T^2|}\sum_{i=1}^{\ell+m-1} \int_{\partial^-_i M }\theta_+|\nabla u|dA\geq \mathcal{C}\frac{|\partial^-_1 M|}{|T^2|}
\end{equation}
where $\mathcal{C}=4\min_{\partial_1^- M}\partial_{\upsilon}u>0$.
\end{proof}

\section{An Example}\label{sec:example}
\label{sec7}
\setcounter{equation}{0}
\setcounter{section}{7}

In this section we illustrate two of the main theorems with explicit initial data, and in the process show the necessity of certain hypotheses. More precisely, we construct initial data $(M,g,k)$ satisfying the hypotheses of Theorem \ref{thm:rigidity} or \ref{thm:EGM} minus the assumption on the structure of $k$, while additionally exhibiting a vanishing mass aspect function (in the noncompact case) and vanishing energy and momentum densities $\mu=|J|_g=0$. It is then shown that, unlike the conclusion of Theorems \ref{thm:rigidity} and \ref{thm:EGM}, the metric $g$ does not have a warped product structure, and in a departure from the conclusion of \cite[Theorem 6.1]{EGM} the initial data arise from a vacuum (with zero cosmological constant) pp-wave spacetime which is not flat.

Fix $r_0>1$ and $P_\theta, P_{\xi}>0$, and consider the 4-manifold $N=\mathbb{R} \times [r_0,\infty) \times T^2$ equipped with the Lorentzian metric
\begin{equation}
\tilde{g}=-2(1-r^{-3})^{-1/2}d\tau dr+r^{-2}(1-r^{-3})^{-1}dr^2+r^2(1-r^{-3})d\xi^2+r^2d\theta^2.
\end{equation}
Here $\tau$ and $r$ are coordinates on $\mathbb{R}$ and $[r_0,\infty)$ respectively, and $\xi$ and $\theta$ are coordinates on $T^2$ with periods $P_\xi$ and $P_\theta$ respectively. Note that $N$ has a boundary $\{r=r_0\}$. A calculation shows that $(N,\tilde{g})$ is a vacuum but non-flat spacetime (with zero cosmological constant). Moreover, consider a function $u=u(r)$ with $u(r_0)=0$ defined by
\begin{equation}
\frac{d u}{dr}=(1-r^{-3})^{-1/2}.
\end{equation}
It follows that the spacetime gradient $\tilde{\nabla}u=-\partial_{\tau}$ is a null Killing field, so that
\begin{equation}\label{equ:zerohessian}
\tilde{g}(\tilde{\nabla}u,\tilde{\nabla}u)=0,\quad\quad\quad\quad
\tilde{\nabla}^2 u=0.
\end{equation}
In particular, $(N,\tilde{g})$ is a pp-wave spacetime.
		
Let $M=\{\tau=0\}\subset N$, then the induced metric on $M$ is given by
\begin{equation}
g=r^{-2}(1-r^{-3})^{-1}dr^2+r^2(1-r^{-3})d\xi^2+r^2d\theta^2.
\end{equation}
Observe that if $P_\xi=4\pi/3$, the metric $g$ can be extended smoothly to $r=1$, and in this case $(M,g)$ is called the \textit{AdS soliton/Horowitz-Myers geon} \cite{HM,Ewoolgar}. Because we stay away from $r=1$, we do not include a restriction on $P_\xi$. The unit timelike normal to $M$ is $\tilde{\mathbf{n}}=-r^{-1}\partial_{\tau}-r(1-r^{-3})^{1/2}\partial_r$, which yields the second fundamental form
\begin{equation}
k=\frac{1}{2}\mathcal{L}_{\tilde{\mathbf{n}}}\tilde{g}
=-r^{-2}(1-r^{-3})^{-1/2}dr^2-r^2(1-r^{-3})^{1/2}(1+2^{-1}r^{-3})d\xi^2
-r^2(1-r^{-3})^{1/2}d\theta^2
\end{equation}
where $\mathcal{L}$ denotes Lie differentiation. Since $(N,\tilde{g})$ is vacuum, we have $\mu=|J|_g=0$. Furthermore, equations \eqref{equ:zerohessian} imply that the function $u$, when restricted to $M$, satisfies the vanishing spacetime Hessian property
\begin{equation}
\bar{\nabla}^2 u=\nabla^2 u+|\nabla u|k=0.
\end{equation}
As in \eqref{gajaiogj}, this shows that each level set $\Sigma_t=\{u=t,\tau=0\}$ has vanishing null expansion $\chi^+=0$, and therefore $M$ is foliated by MOTS. In addition, the functions $c_r u$ coincide with the spacetime harmonic functions satisfying the boundary value problem \eqref{equ:boundarycondition11}, for some constants $c_r\rightarrow 1$ as $r\rightarrow\infty$.
		
Lastly, we show that the mass aspect function of $(M,g,k)$ vanishes. In order to accomplish this we make a change of radial coordinate, as in  \cite[(2.23)]{Ewoolgar}, in order to place the metric into a form satisfying the asymptotics \eqref{asymptotics}. Namely, define
\begin{equation}
\rho\coloneqq 4^{-1/3} r^{-1} \left[1-(1-r^{-3})^{1/2} \right]^{-2/3}.
\end{equation}
Then in this coordinate the metric has the expansion
\begin{align}
\begin{split}
g=&\rho^{-2}d\rho^2+\rho^{2} (1+4^{-1}\rho^{-3})^{4/3}\left( \frac{1-4^{-1}\rho^{-3}}{1+4^{-1}\rho^{-3}} \right)^2 d\xi^2+ \rho^2(1+4^{-1}\rho^{-3})^{4/3} d\theta^2\\
=&\rho^{-2}d\rho^2+\rho^2\hat{g}+\rho^{-1}\mathbf{m}+Q_g,
\end{split}
\end{align}
where $Q_g$ satisfies \eqref{alk3nhapgaj} and
\begin{equation}
\hat{g}=d\xi^2+d\theta^2,\quad\quad\quad\quad \textbf{m}=-\frac{2}{3}d\xi^2+\frac{1}{3}d\theta^2.
\end{equation}
In particular, $\mathrm{Tr}_{\hat{g}}\textbf{m}=-\frac{1}{3}$. Furthermore,
the second fundamental form has the expansion
\begin{align}
\begin{split}
k= &-\rho^{-2} d\rho^2-\rho^{2}\hat{g}+\left[ 2^{-1}\rho^{-5}d\rho^2 -\rho^{-1}\left( \frac{1}{3}d\xi^2-\frac{1}{6}d\theta^2 \right)\right]+\tilde{Q}_k\\
=&-g+\rho^{-1}\mathbf{p}
+2^{-1}\rho^{-5}d\rho^2 +Q_k
\end{split}
\end{align}
where $Q_k$ satisfies \eqref{alk3nhapgaj} and
\begin{equation}
\mathbf{p}=-d\xi^2+\frac{1}{2}d\theta^2.
\end{equation}
In particular $\mathrm{Tr}_{\hat{g}}\textbf{p}=-\frac{1}{2}$, and therefore the mass aspect function $\mathrm{Tr}_{\hat{g}}\left(3\mathbf{m}-2\mathbf{p}\right)=0$. Note that due to the presence of the term $\rho^{-5}d\rho^2$, the extrinsic curvature does not satisfy \eqref{asymptotics}. However, all the results of this manuscript continue to hold under the slightly weaker asymptotics in which the radial direction decay is amended to $k_{\rho\rho}+g_{\rho\rho}=O(\rho^{-5})$, with corresponding fall-off for derivatives.

\end{document}